\definecolor{webgreen}{rgb}{0,.5,0}
\definecolor{webbrown}{rgb}{.8,0,0}
\definecolor{emphcolor}{rgb}{0.5,0.95,0.95}
\DeclareMathAlphabet{\mathpzc}{OT1}{pzc}{m}{it}
\newcommand{\Pro}{\mathbbm{P}}
\renewcommand {\L}{\mathcal{L}}
\newcommand{\bu}{\mathbbmss{b}}
\DeclareMathOperator{\expo}{e}
\newcommand{\eqdef}{\raisebox{0.4pt}{\ensuremath{:}}\hspace*{-1mm}=}
\newcommand{\br}{\mathpzc{b}}
\newcommand{\Cr}{\mathpzc{c}}
\newcommand {\ud}{{\rm d}}
\numberwithin{equation}{section}
\newtheorem{theorem}{Theorem}[section]
\newtheorem{proposition}{Proposition}[section]
\newtheorem{remark}{Remark}[section]
\newtheorem{lemma}{Lemma}[section]
\newtheorem{assump}{Assumption}[section]
\newtheorem*{algo*}{Algorithm}
\newtheorem{algo}{Algorithm}
\numberwithin{remark}{section}
 \numberwithin{proposition}{section}
\numberwithin{corollary}{section}
\newcommand {\R}{\mathbbm{R}}
\newcommand {\E}{\mathbbm{E}}
\newcommand{\diff}{{\rm d}}
\newcommand{\lev}{L\'{e}vy }
\newcommand{\green}{\textcolor[rgb]{0.00,0.50,0.50}}
\newcommand{\der}{\mathrm{d}}
\begin{document}
\title[An optimal multibarrier strategy for a singular stochastic control problem]{
An optimal multibarrier strategy for a singular stochastic control problem with a state-dependent reward}

\author{Mauricio Junca}
\address{Department of Mathematics, Universidad de los Andes, Bogot\'a, Colombia.}
\email{mj.junca20@uniandes.edu.co}

\author{Harold A. Moreno-Franco}
\address{Department of Statistics and Data Analysis, Laboratory of Stochastic Analysis and its Applications, National Research University Higher School of Economics,
Moscow, Russian Federation.}
\email{hmoreno@hse.ru}

\author{Jose-Luis P\'erez}
\address{Department of Probability and Statistics, Centro de Investigacion en Matem\'aticas A.C., Guanajuato, Mexico.}
\email{jluis.garmendia@cimat.mx}

\maketitle

\begin{abstract}
We consider a singular control problem that aims to maximize the expected cumulative rewards, where the instantaneous returns depend on the state of a controlled process. The contributions of this paper are twofold. Firstly, to establish sufficient conditions for determining the optimality of the one-barrier strategy when the uncontrolled process $X$ follows a spectrally negative L\'evy process with a L\'evy measure defined by a completely monotone density. Secondly, to verify the optimality of the $(2n+1)$-barrier strategy when $X$ is a Brownian motion with a drift. Additionally, we provide an algorithm to compute the barrier values in the latter case.
\end{abstract}

\section{Introduction and problem formulation}

Singular stochastic control problems have applications in various fields such as finance, actuarial sciences, and harvesting; see e.g. \cite{A1998,AvPaPi07, Loeffen082,LunguOksendal}. In this applied literature, the main objective is to establish suitable conditions for determining an optimal strategy to maximize expected  rewards until the controlled process falls below zero for the first time. The focus often centres on seeking nearly explicit solutions for these optimization problems.  

An example of such a scenario is seen in optimal dividend problems when the  underlying process $X$ is a spectrally negative  L\'evy process whose L\'evy  measure has a  completely monotone density (see  Assumption \ref{assump_completely_monotone}). Under the condition that the instantaneous rewards (IRs) for executing an admissible strategy are independent of the state of the controlled process, Loeffen \cite{Loeffen082}  showed that the one-barrier strategy is optimal. A similar situation is found in harvesting problems, but in this case the uncontrolled process $X$ is described by a diffusion process without jumps; see e.g. \cite{A1998,LunguOksendal}. 

When the IRs depend on the state of the controlled process through a function $g$ (see \eqref{f1}), known as the {\it instantaneous marginal yield function}, Alvarez studied the problem  in \cite{A2000} for the case where $X$ is a linear diffusion process. Under certain assumptions on the function $g$, the underlying diffusion $X$ and the parameters of the problem, Alvarez  showed that  an optimal control strategy is achieved  with an one-barrier strategy for further details see Remark \ref{remark_alvarez}). Analogous conclusions have been reached in related contexts, such as the optimal bail-out dividend problem for one-dimensional diffusions, as detailed in \cite{F2019}.

Other works on singular control problems that include the dependency on  the state of the controlled process on the IRs  without providing a specific form of the optimal policy are \cite{AR2009,Song}. The difficulty is that incorporating state-dependent IRs  introduces considerably more challenges compared to situations with constant IRs. Unlike the latter, in the former case, the structure of the optimal strategy and the criteria for optimality depend not only on the characteristics of the underlying process, typically encoded in the properties of the scale functions, but also upon the nature of the function $g$ as  discussed in \cite{A2000} when $X$ is a linear diffusion process.

In this study, our first contribution is to expand the scenarios in which the  one-barrier strategy is optimal for the maximization problem mentioned earlier. We specifically consider sufficiently smooth positive functions $g$ that satisfy Assumption \ref{assumpg} (which depends on the parameters of the problem), and spectrally negative L\'evy processes where the  L\'evy  measure has a completely monotone density.

However, in situations where some of the aforementioned conditions are not met, there is no guarantee that one-barrier strategies are optimal. In fact, in the context of the optimal dividend problem, there is a well-known example in \cite{Azcue} where a multibarrier strategy is optimal. When $X$ is a Cramer-Lundberg process,  Gerber showed that an optimal strategy is of the multibarrier type (see \cite{Schmidli}, Section 2.4.2), however, there was no procedure to calculate the actual values of the barriers. Some works have proposed methodologies for identifying optimal barriers, with \cite{AlbrecherFlores} representing a recent endeavor in this regard.

Hence, the second and main contribution of this work lies in presenting an algorithm for determining optimal barriers for general sufficiently smooth positive functions $g$ that satisfy Assumption \ref{assumpg}, and with a Brownian motion with drift serving as the underlying process $X$.

One of the main applications of our model lies in the classical theory of ruin, which examines the path of a stochastic process until the occurrence of ruin defined as the first instance when the process falls below zero. In this context, our model can be viewed as an extension of de Finetti's dividend problem (see \cite{DeFinetti}). The de Finetti problem entails an insurance company making dividend payments to shareholders throughout its operational lifespan until the event of ruin. These dividend payments are intended to maximize the expected net present value of the dividend payments to the shareholders up to the time of ruin. In the spirit of \cite{Sch17}, we generalize this classic problem by incorporating instantaneous state-dependent transaction costs or taxes. After these costs are deducted, shareholders receive a state-dependent instantaneous net proportion of the surplus, given by the function $g$, in the form of dividends. Further results on the classical optimal dividend problem driven by spectrally negative L\'evy process can be found on \cite{AvPaPi07,Loeffen082,Loeffen08}. Additional related results for a class of diffusion process considering policies with transaction costs can be found on \cite{BP2010,BP2012} and with capital injections over a finite horizon in \cite{FS2019}.

Another application of our model is in the context of the harvesting problem (see for instance \cite{LunguOksendal,Song}). In this case, the stochastic process $X$ represents the size or density of a population, with the net price per unit for the population being state-dependent and given by the function $g$. In the context of ecology it makes sense that the price of harvesting is not constant. For instance, harvesting costs tend to be higher for smaller populations since locating specific individuals for harvesting is more challenging. Conversely, in larger populations, harvesting costs tend to be lower as individuals are more readily located for harvesting purposes. The objective of this problem is to identify the optimal harvesting policy that maximizes the total expected discounted income harvested until the population becomes extinct.

\subsection{Problem formulation}

Let  $X=\{X_{t}:t\geq0\}$ denote a spectrally negative L\'evy process, i.e., a L\'evy process with non-monotone trajectories that only has negative jumps, defined on a probability space $(\Omega,\mathcal{F},\Pro)$. For $x\in\R$ we denote by $\mathbb{P}_x$ to the law of the process $X$ starting at $x$ and for simplicity, we write $\mathbb{P}$ instead of $\mathbb{P}_0$. Accordingly, we use $\E_x$ (resp. $\E$) to denote expectation operator associated to the law $\mathbb{P}_x$ (resp. $\mathbb{P}$). Additionally, we denote the natural filtration generated by the process $X$ by $\mathbb{F}\eqdef \{\mathcal{F}_t:t\geq0\}$ satisfying the usual conditions.

A strategy  $\pi\eqdef \{L_t^{\pi}:t\geq0\}$ is a non-decreasing, right-continuous, and $\mathbb{F}$-adapted process. Hence, for each strategy $\pi$, the controlled process $X^{\pi}=:\{X_t^{\pi}:t\geq 0\}$ becomes
\[
X_t^{\pi}\eqdef X_t-L_t^{\pi},\quad \text{for\ $t\geq0$}. 
\]
We write $\tau^{\pi}\eqdef\inf\{t>0:X_t^{\pi}<0\}$ for the time of ruin, we say that a strategy is admissible if the controlled process $X^\pi$ is not allowed to go below zero by the action of the control $L^{\pi}$, that is, if $L_t^{\pi}-L_{t-}^{\pi}\leq X_{t-}^{\pi}$ for $t<\tau^{\pi}$, where $L_{0-}^{\pi}:=0$ and $X_{0-}^{\pi}:=x\geq0$. The set of admissible strategies is denoted by $\mathcal{S}$. The expected reward (ER) for each admissible strategy $\pi\in\mathcal{S}$ is given by
\begin{equation}\label{f1}
V_\pi(x)\eqdef\E_x\left[\int_0^{\tau^{\pi}}\expo^{-qs}g(X^\pi_{s-})\circ\ud L^\pi_s\right],\quad x\geq 0,
\end{equation}
where $q>0$ is the discount rate,  $g:(0,\infty)\mapsto(0,\infty)$ is a twice continuously differentiable function,  and
\begin{align}\label{p1}
\int_0^{\tau^{\pi}}\expo^{-qs} g(X^\pi_{s-})\circ\ud L^\pi_s&\eqdef\int_0^{\tau^{\pi}}\expo^{-qs} g(X^{\pi}_{s})\ud L^{\pi,c}_{s}\\
&\quad+\sum_{0\leq s\leq \tau^\pi}\expo^{-q s}\Delta L^{\pi}_{ s}\int_{0}^{1} g  (X^{\pi}_{ s-}+\Delta X_{s}-\lambda\Delta L^{\pi}_{ s})\ud\lambda,\notag
\end{align}
where $L^{\pi,c}$ denotes the continuous part of $L^{\pi}$. Note that the strategy $\pi$ generates two types of rewards: the first one is related to the continuous part $L^{\pi,c}$, and the other to the jumps of the process $L^\pi$.

We want to maximize the performance criterion  over the set of all admissible strategies $\mathcal{S}$ and find the value function of the problem given by
\begin{align}\label{f2}
V(x)=\sup_{\pi\in\mathcal{S}}V_{\pi}(x),\quad\text{$x\geq0$}.
\end{align}

By the dynamic programming principle,  we identify heuristically that $V$ is associated with  the following Hamilton-Jacobi-Bellman (HJB) equation
\begin{equation}\label{hjb}
\max\{(\L-q)V,g-V'\}=0\ \text{ on}\ [0,\infty),
\end{equation}
where $\L$ is the infinitesimal generator of the process $X$  as in \eqref{generator} below. 

To solve the optimization problem given in \eqref{f2}, we first propose a candidate strategy $\pi^{*}$ for being the optimal strategy (see Eqs. \eqref{lower_barrier} and \eqref{algo4}), then verify, under some assumptions, that its ER  $V_{\pi^{*}}$ satisfies \eqref{hjb}, and finally, using a verification lemma, conclude that $\pi^{*}$ is the optimal strategy for the problem; see Sections \ref{oneb} and \ref{multib}. To this end, we will compute $V_{\pi^{*}}$ and express it in terms of the so-called scale functions; see Section \ref{qsf1}. 

\section{Scale Functions}\label{qsf1}

In this section, we will review $q$-scale functions and provide some properties that will be used throughout this work.  Let us begin with the Laplace exponent $\psi:[0,\infty)\mapsto\R$ of the process $X$, defined by
\begin{align*}
\psi(\theta)\eqdef\log\Big[\E\Big[\expo^{\theta X_1}\Big]\Big]=\mu\theta+\dfrac{\sigma^2}{2}\theta^2-\int_{(0,\infty)}\big(1-\expo ^{-\theta z}-\theta z\mathbf{1}_{\{0<z<1\}}\big)\Pi(\ud z),
\end{align*}
where $\mu\in\R$, $\sigma\geq0$ and the L\'evy measure of $X$, $\Pi$, is a measure defined on $(0,\infty)$ satisfying  $$\int_{(0,\infty)}(1\wedge z^{2})\Pi(\der z)<\infty.$$

For each $q\geq0$, the $q$-scale function of $X$ is a mapping  $W^{(q)}:\R\longrightarrow[0,\infty)$, which is strictly increasing and continuous on $[0,\infty)$ and  such that $W^{(q)}(x)=0$ for $x<0$. It is characterized by its Laplace transform, given by
\begin{equation*}
\int_{0}^{\infty}\expo^{-\theta x}W^{(q)}(x)\der x=\dfrac{1}{\psi(\theta)-q},\quad \text{for $\theta>\Phi(q)$},
\end{equation*}
where $\Phi(q)=\sup\{\theta\geq0: \psi(\theta)=q\}$. Additionally, we define for $x\in\R$
\begin{align*}
Z^{(q)}(x) &\eqdef  1 + q \int_{0}^{x}W^{(q)}(y)\der y.
\end{align*}

From Proposition 3 in \cite{Pistorius2004} we know that $q$-scale functions are harmonic for the discounted processes on $(0,\infty)$, that is,
\begin{equation}\label{harmonic_sf}
(\mathcal{L}-q)W^{(q)}(x)=0\quad\text{and}\quad(\mathcal{L}-q)Z^{(q)}(x)=0, \quad x>0.
\end{equation}

As is usual in singular control problems for L\'evy processes (see for instance \cite{Loeffen082}) we will assume the following:
\begin{assump} \label{assump_completely_monotone}
	The \lev measure $\Pi$ of the process $X$ has a completely monotone density.  That is, $\Pi$ has a density $\nu$ whose $n$-th derivative $ \nu^{(n)}$ exists for all $n \geq 1$ and satisfies
	\begin{align*}
	(-1)^n \nu^{(n)} (x) \geq 0, \quad x > 0.
	\end{align*}
\end{assump}
The previous assumption allows us to obtain a more explicit form of the $q$-scale function as seen in the following result.
\begin{lemma}[\cite{Loeffen08}, Theorem 2 and Corollary 1]\label{compmon}
	For $q>0$ and under Assumption \ref{assump_completely_monotone}, the $q$-scale function $W^{(q)}$ can be written as 
	\begin{align}\label{rep_sf}
	W^{(q)}(x)=\dfrac{\expo^{\Phi(q)x}}{\psi'(\Phi(q))}-\hat{f}(x),
	\end{align}
	where $\hat{f}$ is a non-negative, completely monotone function given by $\hat{f}(x)=\displaystyle\int_{0+}^{\infty}\expo^{-xt}\hat{\mu}(\diff t)$, where $\hat{\mu}$ is a finite measure on $(0,\infty)$. Moreover, $W^{(q)\prime}$ is strictly log-convex  (and hence convex)  and $W^{(q)}$ is $C^{\infty}$ on $(0,\infty)$. 
\end{lemma}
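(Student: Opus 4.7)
The plan is to derive the representation \eqref{rep_sf} by Laplace inversion and to verify the claimed properties of $\hat f$ via Bernstein's theorem. Starting from the defining identity
\[
\int_0^\infty \expo^{-\theta x}W^{(q)}(x)\der x = \frac{1}{\psi(\theta)-q}, \qquad \theta>\Phi(q),
\]
I would note that $\Phi(q)$ is a simple real zero of $\theta\mapsto\psi(\theta)-q$ with $\psi'(\Phi(q))>0$, and split
\[
\frac{1}{\psi(\theta)-q} = \frac{1}{\psi'(\Phi(q))(\theta-\Phi(q))} - \hat F(\theta).
\]
The first summand is the Laplace transform of $\expo^{\Phi(q)x}/\psi'(\Phi(q))$ on $(0,\infty)$, so defining $\hat f$ as the inverse Laplace transform of $\hat F$ produces \eqref{rep_sf}.

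The heart of the proof is to show that $\hat f$ is non-negative and completely monotone with representation $\hat f(x)=\int_{(0,\infty)}\expo^{-xt}\hat\mu(\der t)$ for a finite measure $\hat\mu$ on $(0,\infty)$; by Bernstein's theorem, this amounts to exhibiting a positive $\hat\mu$. I would exploit Assumption \ref{assump_completely_monotone} by writing $\nu(x)=\int_{(0,\infty)}\expo^{-xt}\gamma(\der t)$ (Bernstein applied to the L\'evy density) and substituting into the L\'evy--Khintchine formula for $\psi$. After simplification this yields an integral representation of $\theta\mapsto(\psi(\theta)-q)^{-1}$ as a Stieltjes-type transform, from which, upon extracting the simple real pole at $\Phi(q)$, one reads off $\hat F(\theta)=\int_{(0,\infty)}(\theta+t)^{-1}\hat\mu(\der t)$. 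Laplace inversion then gives the required form of $\hat f$, with finiteness of $\hat\mu$ following from $\hat f(0+)<\infty$, itself a consequence of the continuity of $W^{(q)}$ at zero.

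The main obstacle will be to establish that $\hat\mu$ is \emph{non-negative}. Algebraic manipulation alone does not yield positivity; it is the real structural consequence of Assumption \ref{assump_completely_monotone}, and its verification requires a careful analysis of the zero-set of $\psi-q$ in the complex plane (in particular, that aside from $\Phi(q)$ the singularities contributing to $\hat F$ lie on $(-\infty,0)$ and that the corresponding residues enter with the correct sign). This is the step that distinguishes the completely monotone setting from the case of a general spectrally negative L\'evy process.

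With the representation in hand, the remaining assertions follow easily. Differentiating \eqref{rep_sf} gives
\[
W^{(q)\prime}(x) = \frac{\Phi(q)\expo^{\Phi(q)x}}{\psi'(\Phi(q))} + \int_{(0,\infty)} t\expo^{-xt}\hat\mu(\der t),
\]
a sum of a log-affine function and a completely monotone (hence log-convex) function. Since log-convex functions form a convex cone closed under addition (via H\"older's inequality), $W^{(q)\prime}$ is log-convex, and strictly so as the two summands are not proportional. Smoothness of $W^{(q)}$ on $(0,\infty)$ is immediate from the smoothness of the two terms in \eqref{rep_sf}.
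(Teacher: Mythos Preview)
The paper does not prove this lemma; it is quoted from \cite{Loeffen08} (their Theorem~2 and Corollary~1) and used as a black box throughout. Your plan is essentially the argument of that reference: isolate the simple pole of $(\psi(\theta)-q)^{-1}$ at $\Phi(q)$, recognise the remainder as a Stieltjes transform, and invoke Bernstein's theorem. You correctly single out the positivity of $\hat\mu$ as the nontrivial step driven by the completely monotone hypothesis; in \cite{Loeffen08} this is handled by analysing the analytic continuation of $\psi$ and the location of the remaining (negative real) singularities of $(\psi(\theta)-q)^{-1}$, which is precisely what you anticipate.

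One small comment on your final paragraph: the justification ``strictly so as the two summands are not proportional'' is a little loose. The clean argument is that the Cauchy--Schwarz inequality behind the log-convexity of a sum $A\expo^{\Phi(q)x}+\int_{(0,\infty)}t\expo^{-xt}\hat\mu(\der t)$ is strict because the contributing exponents $\Phi(q)>0$ and $-t<0$ are distinct; equivalently, writing $W^{(q)\prime}(x)=\int \expo^{sx}\rho(\der s)$ for a measure $\rho$ that is not a point mass, one has $W^{(q)\prime}W^{(q)\prime\prime\prime}-(W^{(q)\prime\prime})^2>0$ by strict Cauchy--Schwarz. This covers both the case where $\hat\mu$ is diffuse and the degenerate case where $\hat\mu$ is a single atom.
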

Let us define the first down- and up-crossing times, respectively, by
\begin{align*}
\tau_a^- \eqdef  \inf \left\{ t > 0: X_t < a \right\} \quad \textrm{and} \quad \tau_a^+ \eqdef  \inf \left\{ t > 0: X_t >  a \right\}, \quad a \in \R_+,
\end{align*}
where we follow the convention that $\inf \emptyset = \infty$. By Theorem 8.1 in \cite{kyprianou2014}, for any $a > b$ and $x \leq a$, 
\begin{align}
\begin{split}
\E_x \left[\expo^{-q \tau_a^+} \mathbf{1}_{\left\{ \tau_a^+ < \tau_b^- \right\}}\right] &= \frac {W^{(q)}(x-b)}  {W^{(q)}(a-b)}, \\
\E_x \left[\expo^{-q \tau_b^-} \mathbf{1}_{\left\{ \tau_a^+ > \tau_b^- \right\}}\right] &= Z^{(q)}(x-b) -  Z^{(q)}(a-b) \frac {W^{(q)}(x-b)}  {W^{(q)}(a-b)}.
\end{split}
\label{l1}
\end{align}
For the next result we recall that by Lemma \ref{compmon} the scale function $W^{(q)}$ is $C^{\infty}$ on $(0,\infty)$.
\begin{remark}\label{in1}
Note that since $W^{(q)\prime}$ is strictly log-convex on $(0,\infty)$ it follows that $\dfrac{W^{(q)\prime\prime}}{W^{(q)\prime}}$ is strictly increasing on $(0,\infty)$. Additionally, as in the proof of Theorem 1 in \cite{Loeffen08} we have that $W^{(q)\prime}$ has a unique minimum denoted by $a^*$, hence $W^{(q)\prime\prime}$ is strictly negative on $(0,a^*)$ and strictly positive on $(a^*,\infty)$. The latter implies that
	\begin{equation*}
	\begin{cases}
	\displaystyle\lim_{u\downarrow0}\dfrac{W^{(q)\prime\prime}(u)}{W^{(q)\prime}(u)}\in[0,\infty), &\text{if}\ a^{*}=0,\\
	\displaystyle\lim_{u\downarrow0}\dfrac{W^{(q)\prime\prime}(u)}{W^{(q)\prime}(u)}\in[-\infty,0), &\text{if}\ a^{*}>0.
	\end{cases}
	\end{equation*}
	On the other hand, using \eqref{rep_sf}  
	\begin{equation}\label{eq1.1}
	\lim_{u\uparrow\infty}\dfrac{W^{(q)\prime\prime}(u)}{W^{(q)\prime}(u)}=\lim_{u\uparrow\infty}\frac{\Phi(q)-\psi'(\phi(q))\Phi(q)^{-1}e^{-\Phi(q)u}\int_{0+}^{\infty}t^2e^{-ut}\hat{\mu}(dt)}{1+\psi'(\phi(q))\Phi(q)^{-1}e^{-\Phi(q)u}\int_{0+}^{\infty}te^{-ut}\hat{\mu}(dt)}=\Phi(q).
	\end{equation} 
\end{remark}
In the case of Brownian motion with drift, i.e. $X=x+\mu t +\sigma B_{t}$, where $B=\{B_{t}:t\geq0\}$ is a Brownian motion, we know (see for instance pg. 10 in \cite{KKRivero2013}) that for $x\geq0$
\begin{align}\label{sf}
W^{(q)}(x)=\frac{1}{\sqrt{\mu^2+2q\sigma^2}}\left[ \expo^{\Phi(q) x}-\expo^{-\zeta_1 x}\right]\quad \text{and}\quad Z^{(q)}(x)=\frac{q}{\sqrt{\mu^2+2q\sigma^2}} \left[ \frac{\expo^{\Phi(q) x}}{\Phi(q)}+\frac{\expo^{-\zeta_1 x}}{\zeta_1}\right],
\end{align}
where
\begin{align*}
\zeta_1=\frac{1}{\sigma^2}\left(\sqrt{\mu^2+2q\sigma^2}+\mu\right)\quad\text{and}\quad\Phi(q)=\frac{1}{\sigma^2}\left(\sqrt{\mu^2+2q\sigma^2}-\mu\right).
\end{align*}
Note that, 
\begin{equation}\label{W0}
W^{(q)\prime}(0+)=\frac{2}{\sigma^2}
\end{equation}
and
\begin{equation}\label{eq1}
\lim_{u\downarrow0}\dfrac{W^{(q)\prime\prime}(u)}{W^{(q)\prime}(u)}=-\dfrac{2\mu}{\sigma^{2}}.
\end{equation} 
In this case, several useful properties of the scale functions are provided in Appendix \ref{appendixlemmas}.

\section{Optimality of one-barrier strategies for spectrally negative L\'evy processes}\label{oneb}

In this section, we will assume that $ X$ is a spectrally negative L\'evy process whose L\'evy measure $\Pi$ has a completely monotone density; as in Assumption \ref{assump_completely_monotone}. We aim to find conditions where one-barrier strategies are optimal. Hence, we will begin this section by computing the ER given in \eqref{f1} in terms of scale functions for an arbitrary one-barrier strategy (see \eqref{vf1}). Then, we will propose a candidate for the optimal barrier $\br^*$ (see \eqref{lower_barrier}) and verify that the barrier strategy at level $\br^*$ is indeed optimal for the optimization problem given in \eqref{f2} under suitable assumptions. Proofs of the results of this section are found in Appendix \ref{appendixbarrier}.

\subsection{Computation of the ER}

A barrier strategy $\pi_{\br}=\{L^{\br}_{t}:t\geq0\}$ at level $\br\geq0$, is defined by
\begin{equation*} 
L_{t}^{\br}=\left(\sup_{0\leq s\leq t}\{ X_{s}-\br\}\right)\vee 0, \quad \text{for}\ t\geq0.
\end{equation*}
Observe that $\pi_{\br}$ is indeed an admissible strategy, which is continuous if $x\leq\br$, and has a unique jump of size $x-\br$ at time zero if $x>\br$, where $x$ is the starting value of $X$. We denote by $X^{\br}_{t}=X_{t}-L_{t}^{\br}$. 

Let $V_{\br}$ be the ER for the barrier strategy at level $\br$, i.e,
\begin{align}\label{au1}
V_{\br}(x)\eqdef\E_x\left[\int_0^{\tau_{\br}}\expo^{-qt}g(X^{\br}_{t-})\circ\ud L^{\br}_t\right],\quad\text{$x\geq0$,}
\end{align}
with $\tau_{\br}\eqdef \displaystyle\inf\{t\geq0: X^{\br}_{t}<0\}$. Using the strong Markov property, we provide an expression for \eqref{au1} in terms of scale functions.
\begin{proposition}\label{aux2}
	Let $V_{\br}$ be as in \eqref{au1}. Then 
	\begin{equation}\label{vf1}
	V_{\br}(x)=
	\begin{cases} 
	\displaystyle \frac{g(\br)}{W^{(q)\prime}(\br )}W^{(q)}(x)&\text{if }\ x \leq \br,\\
	H(x;\br) & \text{if }\ x \geq\br,
	\end{cases}
	\end{equation}
	where
	\begin{equation}\label{H1}
	H(x;u)= G(x)-G(u)+g(u)\dfrac{W^{(q)}(u)}{W^{(q)\prime}(u)}\quad\text{and}\quad G(x)=\displaystyle\int_0^xg(y)dy.
	\end{equation}
\end{proposition}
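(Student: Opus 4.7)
The plan is to split into the regions $x\leq \br$ and $x>\br$ and to exploit the structure of the barrier strategy: since $X$ is spectrally negative, the running supremum (hence $L^{\br}$) is continuous for $t>0$, so when $x\leq \br$ the process $L^{\br}$ has no jumps on $[0,\tau_{\br}]$, while if $x>\br$ it has a single jump at $t=0$ of size $x-\br$, during which $\Delta X_{0}=0$ almost surely. Moreover, whenever $\ud L^{\br}_{t}>0$ for $t>0$, the controlled process $X^{\br}_{t}$ sits at the barrier $\br$.

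For $x\leq \br$ the Marcus integral in \eqref{p1} collapses to $\int_{0}^{\tau_{\br}}\expo^{-qs}g(X^{\br}_{s})\,\ud L^{\br}_{s}$, and since $X^{\br}_{s}=\br$ on the support of $\ud L^{\br}$ the factor $g(\br)$ pulls out, reducing the task to the fluctuation identity
\[
\E_{x}\!\left[\int_{0}^{\tau_{\br}}\expo^{-qs}\,\ud L^{\br}_{s}\right]=\frac{W^{(q)}(x)}{W^{(q)\prime}(\br)}.
\]
The cleanest derivation applies It\^o's formula to $\expo^{-qt}W^{(q)}(X^{\br}_{t})$: on $(0,\br)$ the drift term vanishes by the harmonicity \eqref{harmonic_sf}; the reflection contributes $-W^{(q)\prime}(\br)\int_{0}^{t}\expo^{-qs}\ud L^{\br}_{s}$ because $X^{\br}$ lies at $\br$ precisely when $L^{\br}$ grows; and at $t=\tau_{\br}$ the boundary term vanishes, using $W^{(q)}\equiv 0$ on $(-\infty,0)$ together with $W^{(q)}(0)=0$ in the unbounded-variation case (bounded-variation paths jump strictly across zero). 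Taking expectations after a standard localization yields the first branch of \eqref{vf1}.

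For $x>\br$ the only jump of $L^{\br}$ occurs at $t=0$, so the corresponding summand in \eqref{p1} equals
\[
(x-\br)\int_{0}^{1}g\bigl(x-\lambda(x-\br)\bigr)\,\ud\lambda = G(x)-G(\br)
\]
by the substitution $y=x-\lambda(x-\br)$. Immediately after this lump payment the controlled process restarts at $\br$, so the strong Markov property delivers $V_{\br}(x)=G(x)-G(\br)+V_{\br}(\br)$, and inserting $V_{\br}(\br)=g(\br)W^{(q)}(\br)/W^{(q)\prime}(\br)$ from the previous case gives exactly $H(x;\br)$. The only substantive step is the It\^o-based fluctuation identity; careful treatment of the boundary behaviour of $W^{(q)}$ at $0$ and of the localization of the local martingale is the main technical issue.
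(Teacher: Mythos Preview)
Your argument is correct and, for the region $x\leq\br$, genuinely more direct than the paper's. Both proofs handle $x>\br$ identically (a single jump at time zero, change of variables, strong Markov). For $x\leq\br$, the paper first applies the strong Markov property at $\tau_{\br}^{+}$ to obtain $V_{\br}(x)=V_{\br}(\br)\,W^{(q)}(x)/W^{(q)}(\br)$, and then computes $V_{\br}(\br)$ by passing to the reflected process $Y=S-\widetilde{X}$ and invoking excursion theory: the point $\epsilon_{S^{-1}_{t}}=0$ (by absence of positive jumps) is what allows $g(\br)$ to be factored out, after which the remaining integral is the classical identity from \cite{Avram04}. You bypass the excursion machinery entirely by noting at the outset that $X^{\br}=\br$ on the support of $\ud L^{\br}$, which is exactly the same fact expressed path-wise rather than through excursions. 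Your It\^o derivation of $\E_{x}\bigl[\int_{0}^{\tau_{\br}}\expo^{-qs}\,\ud L^{\br}_{s}\bigr]=W^{(q)}(x)/W^{(q)\prime}(\br)$ is the standard self-contained proof of that identity; the paper simply cites the result. What your approach buys is brevity and transparency in the spectrally negative setting; the paper's excursion argument is heavier but would adapt more readily if one wanted to handle processes with two-sided jumps, where your key observation fails. The localization and boundary issues you flag are real but routine, and under Assumption~\ref{assump_completely_monotone} the smoothness of $W^{(q)}$ removes most of the difficulty.
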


\subsection{Selection of the optimal threshold} In order to maximize $V_\br(x)$ uniformly in $x$, by looking at equation \eqref{vf1}, we consider the mapping $F:[0,\infty)\mapsto\R$ by
\begin{equation}\label{eqF1}
F(u)=
\dfrac{g(u)}{W^{(q)\prime}(u)}.
\end{equation}
Then, we define our choice of optimal threshold $\br^{*}$ by
\begin{align}\label{lower_barrier}
	\br^{*}:=\sup\{u\geq 0:  F(u)\geq F(x),\ \text{ for all $x\geq0$}\}.
\end{align}
Observe that $\br^{*}\leq a^{*}<\infty$ (with $a^{*}$ as in Remark \ref{in1}) if $g$ is non-increasing on $(0, \infty)$. To guarantee that $b^*<\infty$ we will make the following assumption throughout the paper.
\begin{assump}\label{assumpg}
We assume that $g(z)=o(e^{\phi(q)z})$ as $z\rightarrow\infty$.
\end{assump}
Note that \eqref{rep_sf} implies that $g(z)=o(W^{(q)}(z))$ and also that $g(z)=o(W^{(q)\prime}(z))$, hence  $b^*<\infty$. Now, from \eqref{vf1} we obtain
\begin{equation}\label{der_vf_1}
	V_\br'(u)=
	\begin{cases} 
		\displaystyle \frac{g(\br)}{W^{(q)\prime}(\br)}W^{(q)\prime}(u)&\mbox{if }  u < \br,\\
		g(u) & \mbox{if } u > \br,
	\end{cases}\quad\textrm{ and }\quad V_\br''(u)=
	\begin{cases} 
		\displaystyle \frac{g(\br)}{W^{(q)\prime}(\br)}W^{(q)\prime\prime}(u)&\mbox{if } u < \br,\\
		g'(u) & \mbox{if } u > \br.
	\end{cases}
\end{equation}
Hence, for each $\br\geq0$, $V_{\br}$ is increasing and continuously differentiable on $(0,\infty)$ since $W^{(q)\prime}$ and $g$ are positive on $(0,\infty)$. Additionally, $F$ is continuously differentiable and
\begin{align}\label{der1}
F'(u)= 
\dfrac{g(u)}{W^{(q)\prime}(u)}\Bigg(\dfrac{g'(u)}{g(u)}-\dfrac{W^{(q)\prime\prime}(u)}{W^{(q)\prime}(u)}\Bigg),
\end{align}
then,
\begin{equation}\label{der2}
	\dfrac{g'(\br^{*})}{g(\br^{*})}=\dfrac{W^{(q)\prime\prime}(\br^{*})}{W^{(q)\prime}(\br^{*})}\quad \text{if}\ \br^{*}>0.
\end{equation}
Therefore, from \eqref{der_vf_1} and \eqref{der2}, $V_{\br^*}$ is twice continuously differentiable. We summarize these results in the next lemma.

\begin{lemma}\label{smoothness}
For any $\br\geq0$, $V_{\br}\in C^{1}((0,\infty))\cap C^{2}((0,\infty))\setminus\{\br\})$ and $V_{\br}>0$ is increasing on $(0,\infty)$. Additionally,  $V_{\br^{*}}\in C^{2}((0,\infty))$, with $\br^{*}$ defined as in \eqref{lower_barrier}.
\end{lemma}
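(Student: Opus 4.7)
The plan is to read each claim directly off the piecewise expressions in \eqref{vf1}--\eqref{der_vf_1}, and to promote the regularity at $\br^{*}$ from $C^{1}$ to $C^{2}$ using the first-order condition \eqref{der2}.

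First I would establish positivity and monotonicity of $V_{\br}$ on $(0,\infty)$. For $0<x\le\br$, the first branch of \eqref{vf1} is a positive constant (since $g>0$ and $W^{(q)\prime}>0$) times $W^{(q)}(x)>0$. For $x>\br$, the second branch equals $\int_{\br}^{x}g(y)\,\der y+g(\br)W^{(q)}(\br)/W^{(q)\prime}(\br)$, a strictly positive integral plus a non-negative term. Monotonicity on $(0,\infty)$ then follows from \eqref{der_vf_1}, because $g>0$ and $W^{(q)\prime}>0$ imply $V_{\br}'(u)>0$ for every $u\in(0,\infty)\setminus\{\br\}$, together with the continuity of $V_{\br}$.

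For regularity I would treat the three regions separately. On $(0,\br)$, $V_{\br}$ is a constant multiple of $W^{(q)}$, which is $C^{\infty}$ by Lemma \ref{compmon}. On $(\br,\infty)$, $V_{\br}$ equals $G$ plus a constant, and since $g\in C^{2}((0,\infty))$ by hypothesis, $G\in C^{3}((0,\infty))$. Thus $V_{\br}\in C^{2}((0,\infty)\setminus\{\br\})$. At $x=\br$ the one-sided limits in \eqref{der_vf_1} give $V_{\br}'(\br-)=\frac{g(\br)}{W^{(q)\prime}(\br)}W^{(q)\prime}(\br)=g(\br)=V_{\br}'(\br+)$, so $V_{\br}\in C^{1}((0,\infty))$.

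For the $C^{2}$ statement at $\br^{*}$ I would compare the one-sided second derivatives supplied by \eqref{der_vf_1}:
\[
V_{\br^{*}}''(\br^{*}-)=\frac{g(\br^{*})}{W^{(q)\prime}(\br^{*})}W^{(q)\prime\prime}(\br^{*}),\qquad V_{\br^{*}}''(\br^{*}+)=g'(\br^{*}).
\]
When $\br^{*}>0$ these agree precisely by the first-order relation \eqref{der2}; when $\br^{*}=0$ the left branch of \eqref{vf1} is vacuous, so $V_{\br^{*}}=H(\,\cdot\,;0)$ on $(0,\infty)$, and its $C^{2}$ regularity follows from that of $G$. Either way $V_{\br^{*}}\in C^{2}((0,\infty))$. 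There is no real obstacle here; the only subtlety worth flagging is to separate out the boundary case $\br^{*}=0$, since then \eqref{der2} is not asserted and one must quote the explicit form of $V_{\br^{*}}$ instead.
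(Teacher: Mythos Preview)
Your proposal is correct and follows essentially the same approach as the paper: the paper derives the piecewise expressions \eqref{der_vf_1}, reads off $C^{1}$ regularity and monotonicity from the positivity of $g$ and $W^{(q)\prime}$, and then invokes \eqref{der2} to match second derivatives at $\br^{*}$. Your treatment is in fact slightly more careful, since you explicitly separate out the boundary case $\br^{*}=0$ (where \eqref{der2} is not asserted), whereas the paper leaves this implicit.
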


\subsection{Verification of optimality}

We show now that the optimality of the stochastic control problem given in \eqref{f2} is achieved by the barrier strategy at level $\br^*$ under Condition \eqref{decrF1} below. Let $\mathcal{L}$ denote the infinitesimal generator of the process $X$, acting on the space of sufficiently smooth functions i.e. $C^1((0,\infty))$ (resp. $ C^2((0,\infty))$) if $X$ is of bounded variation (resp. unbounded variation). The infinitesimal generator is given by
\begin{equation}\label{generator}
	\begin{split}
	\mathcal{L} f(x)= \frac{\sigma^2}{2}f''(x)+\mu f'(x) +\int_{(0,\infty)}[f(x - z)-f(x)+f'(x)z\mathbf{1}_{\{0<z<1\}}]\Pi(\mathrm{d}z). 
	\end{split}
\end{equation} 

\begin{lemma}[Verification Lemma]\label{lv1}  
	Suppose that $\hat{\pi}\in \mathcal{S}$ is such that $V_{\hat{\pi}}$ is sufficiently smooth on $(0,\infty)$, right-continuous at $0$, and, for all $x>0$,
	 \begin{equation}\label{hjb1}
	 	\max\{(\L-q)V_{\hat{\pi}}(x),g(x)-V'_{\hat{\pi}}(x)\}\leq 0,
	 \end{equation}
	Then $V_{\hat{\pi}}(x)=V(x)$ for all $x\geq0$ and, hence, $\hat{\pi}$ is an optimal strategy. 
\end{lemma}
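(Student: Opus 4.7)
The plan is to prove that $V_{\hat{\pi}}(x) \geq V_\pi(x)$ for every admissible $\pi \in \mathcal{S}$ and every $x \geq 0$; since $\hat{\pi} \in \mathcal{S}$, the reverse inequality $V_{\hat{\pi}} \leq V$ is automatic, and the two together give $V_{\hat{\pi}} = V$.

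First I would fix $\pi \in \mathcal{S}$ and $x \geq 0$, and apply the It\^{o}--Meyer change-of-variables formula to $t \mapsto \expo^{-qt} V_{\hat{\pi}}(X^\pi_t)$ on $[0, t \wedge \tau^\pi_n]$, where $(\tau^\pi_n)$ is a localizing sequence that also keeps $X^\pi$ bounded away from $0$ (so that the stated smoothness of $V_{\hat{\pi}}$ on $(0,\infty)$---$C^1$ in the bounded-variation case and $C^2$ in the unbounded-variation case---is available on the relevant range). Writing $X^\pi = X - L^\pi$ and compensating the L\'{e}vy jumps of $X$, the resulting decomposition takes the form
\begin{align*}
\expo^{-q(t\wedge\tau^\pi_n)} V_{\hat{\pi}}(X^\pi_{t\wedge\tau^\pi_n}) - V_{\hat{\pi}}(x)
&= \int_0^{t \wedge \tau^\pi_n} \expo^{-qs}(\mathcal{L}-q) V_{\hat{\pi}}(X^\pi_s)\,\ud s + M_t \\
&\quad - \int_0^{t \wedge \tau^\pi_n} \expo^{-qs} V'_{\hat{\pi}}(X^\pi_{s-})\,\ud L^{\pi,c}_s \\
&\quad + \sum_{0\leq s \leq t\wedge \tau^\pi_n} \expo^{-qs}\bigl[V_{\hat{\pi}}(X^\pi_s) - V_{\hat{\pi}}(X^\pi_{s-} + \Delta X_s)\bigr],
\end{align*}
with $M$ a local martingale.

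Next I would invoke the two HJB inequalities. The bound $(\mathcal{L}-q) V_{\hat{\pi}} \leq 0$ makes the drift integral non-positive; the bound $V'_{\hat{\pi}} \geq g$ gives $-V'_{\hat{\pi}}(X^\pi_{s-})\,\ud L^{\pi,c}_s \leq -g(X^\pi_{s-})\,\ud L^{\pi,c}_s$; and for each control jump, using $X^\pi_s = X^\pi_{s-} + \Delta X_s - \Delta L^\pi_s$ together with the fundamental theorem of calculus yields
$$V_{\hat{\pi}}(X^\pi_s) - V_{\hat{\pi}}(X^\pi_{s-} + \Delta X_s) = -\Delta L^\pi_s \int_0^1 V'_{\hat{\pi}}\bigl(X^\pi_{s-} + \Delta X_s - \lambda\Delta L^\pi_s\bigr)\,\ud\lambda \leq -\Delta L^\pi_s \int_0^1 g\bigl(X^\pi_{s-} + \Delta X_s - \lambda\Delta L^\pi_s\bigr)\,\ud\lambda,$$
which is precisely the negative of the jump contribution in the Marcus-type integral \eqref{p1}. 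Combining these estimates and taking expectation (so that the localized $M$ has mean zero) yields
$$V_{\hat{\pi}}(x) \geq \E_x\bigl[\expo^{-q(t\wedge\tau^\pi_n)} V_{\hat{\pi}}(X^\pi_{t\wedge\tau^\pi_n})\bigr] + \E_x\left[\int_0^{t\wedge\tau^\pi_n} \expo^{-qs} g(X^\pi_{s-})\circ \ud L^\pi_s\right].$$
Letting $n \to \infty$ and then $t \to \infty$ and applying monotone convergence (using $V_{\hat{\pi}},g \geq 0$ and $V_{\hat{\pi}}$ right-continuous at $0$) produces $V_{\hat{\pi}}(x) \geq V_\pi(x)$, from which the conclusion follows by taking the supremum over $\pi \in \mathcal{S}$.

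The main obstacle is matching the It\^{o} jump term to the Marcus-type integral \eqref{p1}: the precise averaging $\int_0^1 g(\,\cdot\,-\lambda\Delta L^\pi_s)\,\ud\lambda$ in \eqref{p1} is exactly what the fundamental theorem of calculus produces from the primitive $V_{\hat{\pi}}$, so the pointwise inequality $V'_{\hat{\pi}} \geq g$ transfers cleanly into a pathwise bound on the jump rewards; indeed this compatibility is the very reason for the form of \eqref{p1}. A secondary technical point is the localization: one must check that $M$ is a true martingale on each $[0,\tau^\pi_n]$ and that the boundary term $\E_x[\expo^{-q\tau^\pi_n} V_{\hat{\pi}}(X^\pi_{\tau^\pi_n})]$ can be dropped (by non-negativity) in the limit, with the discount factor $\expo^{-qs}$ absorbing any growth of $V_{\hat{\pi}}$.
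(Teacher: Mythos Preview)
Your proof is correct and follows essentially the same route as the paper: apply the change-of-variables/It\^{o} formula to $\expo^{-qt}V_{\hat{\pi}}(X^\pi_t)$ with a localization $T_n=\inf\{t:X^\pi_t>n\text{ or }X^\pi_t<1/n\}$, bound the drift and control terms using the two HJB inequalities (handling the control-jump term exactly as you do via the fundamental theorem of calculus to recover the Marcus integrand), take expectations, and pass to the limit by monotone convergence. The one technical refinement the paper adds, which you leave implicit, is a preliminary reduction (citing an external lemma) to the class of strategies for which the reward up to the first time $X^\pi$ reaches $0$ agrees with the reward up to $\tau^\pi$, so that $T_n\uparrow$ the correct stopping time in the limit.
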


The previous lemma is the main tool to prove the main result of this section.
\begin{theorem}\label{Ta1}
Assume that $\br^{*}<\infty$ with $\br^{*}$ as in \eqref{lower_barrier}, and 
\begin{equation}\label{decrF1}
	\dfrac{g'(u)}{g(u)}\leq\dfrac{W^{(q)\prime\prime}(u)}{W^{(q)\prime}(u)}\quad\text{for $u\in(\br^*,\infty)$}.
\end{equation}
Then the barrier strategy $\pi_{\br^*}$ is optimal and $V(x)=V_{\br^{*}}(x)$ for all $x\geq0$.
\end{theorem}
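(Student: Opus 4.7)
The plan is to apply the Verification Lemma \ref{lv1} to the candidate value $V_{\br^*}$. The regularity requirements are already in hand: Lemma \ref{smoothness} gives $V_{\br^*}\in C^2((0,\infty))$, and formula \eqref{vf1} makes $V_{\br^*}$ right-continuous at $0$. What remains is to verify the pointwise HJB inequality $\max\{(\mathcal{L}-q)V_{\br^*}(x),\,g(x)-V'_{\br^*}(x)\}\leq 0$ on $(0,\infty)$, which I would establish by splitting according to whether $x\leq \br^*$ or $x>\br^*$.

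On $(0,\br^*)$, \eqref{vf1} shows that $V_{\br^*}$ is a positive scalar multiple of $W^{(q)}$, and since $V_{\br^*}$ and $W^{(q)}$ share the same extension by zero on $(-\infty,0)$, the harmonicity identity \eqref{harmonic_sf} yields $(\mathcal{L}-q)V_{\br^*}(x)=0$. By \eqref{der_vf_1}, the derivative inequality reduces to $V'_{\br^*}(x)-g(x)=W^{(q)\prime}(x)[F(\br^*)-F(x)]\geq 0$, which holds because $\br^*$ maximizes $F$ by \eqref{lower_barrier}. On $(\br^*,\infty)$, \eqref{der_vf_1} gives $V'_{\br^*}(x)=g(x)$, so the HJB collapses to showing $(\mathcal{L}-q)V_{\br^*}(x)\leq 0$.

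This last inequality is the main obstacle. The natural decomposition is $V_{\br^*}=F(\br^*)W^{(q)}+R$, where $R\equiv 0$ on $(-\infty,\br^*]$ and $R(x)=[G(x)-G(\br^*)]-F(\br^*)[W^{(q)}(x)-W^{(q)}(\br^*)]$ on $(\br^*,\infty)$. By \eqref{harmonic_sf}, the target inequality becomes $(\mathcal{L}-q)R(x)\leq 0$ on $(\br^*,\infty)$. The definition of $\br^*$ already forces $R'(x)=W^{(q)\prime}(x)[F(x)-F(\br^*)]\leq 0$, and together with $R(\br^*)=0$ this gives $R\leq 0$ on $(\br^*,\infty)$. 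The delicate point is taming the nonlocal integral in $\mathcal{L}R$, which is not controlled by $R\leq 0$ alone. This is where Assumption \ref{assump_completely_monotone} plays its essential role: through the representation \eqref{rep_sf} and the log-convexity of $W^{(q)\prime}$ from Lemma \ref{compmon}, together with the smooth-fit identity \eqref{der2} at $\br^*$ and the pointwise hypothesis \eqref{decrF1}, one should be able to show that $(\mathcal{L}-q)R$ is nonincreasing on $(\br^*,\infty)$ with $(\mathcal{L}-q)R(\br^*+)=0$ (the vanishing at $\br^*$ follows from $V_{\br^*}\in C^2$ and $R\equiv 0$ to the left). A natural route is to differentiate $(\mathcal{L}-q)R$ in $x$ and convert \eqref{decrF1} into a pointwise sign condition on that derivative, exploiting the completely monotone structure of $\Pi$ to handle the resulting nonlocal contribution, and then conclude via integration from $\br^*$.
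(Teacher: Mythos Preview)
Your reduction is correct and matches the paper: Lemma~\ref{lv1} plus Lemma~\ref{smoothness} reduce the task to the HJB inequality; on $(0,\br^*)$ harmonicity \eqref{harmonic_sf} and maximality of $F$ at $\br^*$ settle both terms; on $(\br^*,\infty)$ the equality $V_{\br^*}'=g$ leaves only $(\mathcal{L}-q)V_{\br^*}\leq 0$ to prove. The decomposition $V_{\br^*}=F(\br^*)W^{(q)}+R$ and the boundary identity $(\mathcal{L}-q)R(\br^*)=0$ are also fine.

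The gap is in your route to the remaining inequality. You propose to show that $x\mapsto(\mathcal{L}-q)R(x)$ is nonincreasing on $(\br^*,\infty)$ by differentiating and invoking \eqref{decrF1}. But this monotonicity is \emph{not} a consequence of \eqref{decrF1}. Already in the Brownian case $\Pi\equiv 0$, from \eqref{eqH1} one computes $\tfrac{d}{dx}(\mathcal{L}-q)R(x)=(\mathcal{L}-q)g(x)$, so your monotonicity is exactly the second-order condition $(\mathcal{L}-q)g\leq 0$ on $(\br^*,\infty)$, which is independent of the first-order hypothesis $g'/g\leq W^{(q)\prime\prime}/W^{(q)\prime}$. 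Indeed, one can perturb any $g$ satisfying \eqref{decrF1} by a small compactly supported bump that leaves $g,g'$ (hence $g'/g$ and $\br^*$) unchanged at a chosen point $x_0>\br^*$ while making $g''(x_0)$, and thus $(\mathcal{L}-q)g(x_0)$, positive. The paper itself (see the paragraph preceding Proposition~\ref{decFF}) treats ``$(\mathcal{L}-q)g\leq 0$'' and the $F$-monotonicity condition as two \emph{separate} sufficient criteria, so there is no hope of deriving one from the other.

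The paper avoids any monotonicity claim on $(\mathcal{L}-q)R$. For each fixed $x>\br^*$ it compares $V_{\br^*}$ with the value $V_x$ of the barrier strategy at level $x$; since $V_x=F(x)W^{(q)}$ on $[0,x]$, harmonicity gives $(\mathcal{L}-q)V_x(x-)=0$, and it suffices to prove $(\mathcal{L}-q)(V_{\br^*}-V_x)(x-)\leq 0$. Hypothesis \eqref{decrF1} is used exactly twice: once to get $V_{\br^*}''(x)=g'(x)\leq g(x)\,W^{(q)\prime\prime}(x)/W^{(q)\prime}(x)=V_x''(x-)$, and once (through $F$ nonincreasing on $[\br^*,x]$) to get $V_{\br^*}'\geq V_x'$ on $[0,x]$, whence $V_{\br^*}-V_x$ is nonnegative and nondecreasing on $[0,x]$ with matching first derivatives at $x$. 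These inequalities control the diffusion, drift, potential and nonlocal terms of $\mathcal{L}-q$ individually (the integral term because each increment $(V_{\br^*}-V_x)(x-z)-(V_{\br^*}-V_x)(x)\leq 0$), yielding the claim --- this is the Loeffen comparison technique referenced at the end of the paper's proof.
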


\begin{remark}\label{der_f_re}
\begin{enumerate}
	\item[(i)]  Notice that  Condition \eqref{decrF1} is equivalent to the assumption that $F'\leq0$ on $[\br^{*},\infty)$.
	\item[(ii)] Taking $g\equiv1$, we see that \eqref{decrF1} is equivalent to
	\begin{equation}\label{cond1}
		W^{(q)\prime\prime}\geq0\quad \text{on}\ (\br^{*},\infty), 
	\end{equation}
	since $W^{(q)\prime}>0$ on $(0,\infty)$. From here and Remark \ref{in1} we have that $\br^{*}=a^{*}$. Notice that \eqref{cond1} is equivalent to the criterion used by Loeffen to verify that $a^*$ is the optimal barrier for the case mentioned above; see Theorem 2 in \cite{Loeffen082}.
	
	\item[(iii)]\label{p10} By \eqref{harmonic_sf}, \eqref{vf1}, and \eqref{hjb1}, it is enough to show that 
	\begin{equation}\label{s1}
		(\L-q)V_{\br^{*}} \leq0\quad \text{on}\ (\br^{*},\infty)
	\end{equation}
	to verify the statement given in Theorem \ref{Ta1}, since $(\L-q)V_{\br^{*}} =0$ and  $g -V'_{\br^{*}} \leq0$ hold  on $(0,\br^{*})$ and $(0,\infty)$, respectively; see Appendix \ref{appendixbarrier} for more details.
\end{enumerate}
\end{remark}

\begin{remark}\label{remark_alvarez}
Consider that the underlying process $X$ is given by the solution to the following stochastic differential equation
	\begin{equation*}
		\der X_{t}=\bar{\mu}(X_{t})\der t	+\bar{\sigma}(X_{t})\der W_{t}\quad t\geq0,
	\end{equation*}
	where $\bar{\mu}:\R\mapsto\R$ and $\bar{\sigma}:\R\mapsto(0,\infty)$  are Lipschitz continuous functions. Defining  $\psi$  as a combination of the fundamental solutions of the ordinary second-order differential equation $[\mathcal{A}-q]u(x)=0$, with $\mathcal{A}$ the infinitesimal generator of $X$ (for more details, see \cite{BS1996}, Ch. II), under the assumption that $g\in C((0,\infty))\cup C^{1}((0,\infty)\setminus D)$, with $D\subset(0,\infty)$ a countable set,  is non-increasing, and that the function $f(x)\eqdef g(x)/\psi'(x)$ has a unique interior maximum at some point $\br^{*}\in(0,\infty)$, where $f$ is non-increasing  on $(\br^{*},\infty)$, Alvarez  in \cite{A2000} showed that  an optimal control strategy is achieved  through the $\br^{*}$-barrier strategy. However, if the condition
	\begin{equation*}
		\dfrac{[\bar{\sigma}(x)]^{2}}{2}g'(x)+\bar{\mu}(x)g(x)-qG(x)\leq0\quad \text{for}\ x\in(0,\infty)\setminus D,
		\end{equation*}
	holds, he verified that the optimal policy is to drive the process instantaneously to the origin. In our case, notice that this condition is equivalent to checking that \eqref{s1} holds on $(0,\infty)$. 
\end{remark}

\subsubsection{Examples}
In the next two examples it is straightforward to verify that $g$ meets Assumption \ref{assumpg}.
\begin{enumerate}
\item Consider $g(x)=x^{\alpha}$ with $\alpha>0$ fixed. 
denote 
\begin{equation}\label{h3}
h(x)=\frac{g'(x)}{g(x)}-\dfrac{W^{(q)\prime\prime}(x)}{W^{(q)\prime}(x)}, \quad x>0.
\end{equation}
Since $\dfrac{g'(x)}{g(x)}=\dfrac{\alpha}{x}$ and $-\dfrac{W^{(q)\prime\prime}(x)}{W^{(q)\prime}(x)}$ are non-increasing, we have that $x\mapsto h(x)$ is also non-increasing satisfying
\[
\lim_{u\downarrow0}h(u)=+\infty \quad \text{and,} \quad  \lim_{u\uparrow\infty}h(u)=-\Phi(q).
\]
Therefore, there must exist a unique $\hat{b}>0$ such that $h(\hat{b})=0$ which, by \eqref{der1} together with the fact that $g$ and $W^{(q)}$ are strictly positive functions in $(0,\infty)$, imply that $\hat{b}$ is the unique root of the mapping $x\mapsto F'(x)$,  and therefore $\hat{b}=\br^*$. Additionally, note that Condition \eqref{decrF1} is satisfied because the function $h$ is non-increasing. Hence, by Theorem \ref{Ta1}  the barrier strategy $\pi_{\br^{*}}$ is optimal.

\item Let $\beta>0$ and $g(x)=\expo^{-\beta x}$, then $h$ is  non-increasing and therefore $F$   has a unique maximum at $\br^*>0$ if 
$$\dfrac{W^{(q)\prime\prime}(\br^*)}{W^{(q)\prime}(\br^*)}=\dfrac{g'(\br^*)}{g(\br^*)}=-\beta,$$ 
which is true only if $a^{*}\neq0$ by Remark \ref{in1} and \eqref{eq1.1}. On the other hand, if $a^{*}=0$, it follows that $\dfrac{g'(x)}{g(x)}=-\beta<\dfrac{W^{(q)\prime\prime}(x)}{W^{(q)\prime}(x)}$ for all $x\in(0,\infty)$, which implies that $\br^*=0$. Hence, Condition \eqref{decrF1} is satisfied, and therefore the barrier strategy $\pi_0$ is optimal by an application of Theorem \ref{Ta1}.
\end{enumerate}

\subsection{Necessity of Condition \eqref{decrF1}}\label{nec_example}

It is important to remark that Condition \eqref{decrF1} is sufficient but not necessary for the optimality of the one-barrier strategy. Indeed,  in the next example, where the uncontrolled process $X$ is a standard Brownian motion, i.e., $X_{t}=x+\mu t+\sigma W_{t}$, we will show the optimality of the one-barrier strategy even when Condition \eqref{decrF1} is not satisfied. For that, by Remark \ref{p10}, it is enough to check that \eqref{s1} holds. Let $g:[0,\infty)\rightarrow[0,1)$ be defined as
\begin{equation}\label{g1}
g(x)=\dfrac{0.3x^2}{0.5x^{3}-0.32x+0.2}\quad\text{for}\ x\geq0,
\end{equation}
and let $q=0.2$, $\mu=2.3$ and $\sigma=2$. Notice that this function also satisfies Assumption \ref{assumpg}. We obtain numerically that $F$ attains its global maximum at $\br^{*}=0.8925$. Condition \eqref{decrF1} is not satisfied but $(\mathcal{L}-q)V_{\br^{*}}(x)<0$  for $x\in(\br^{*},\infty)$, see Figure \ref{1-barropt}. Then,  by Lemma \ref{lv1}, the barrier strategy at the level $\br^*$ is optimal. Note that the function $g$ taking the value $0$ does not pose an issue. For the process $X$, $0$ is regular for $(-\infty,0)$, meaning that when $X$ reaches $0$, ruin occurs instantaneously.

\begin{figure}[t!]
	\centering
	\includegraphics[scale=0.5]{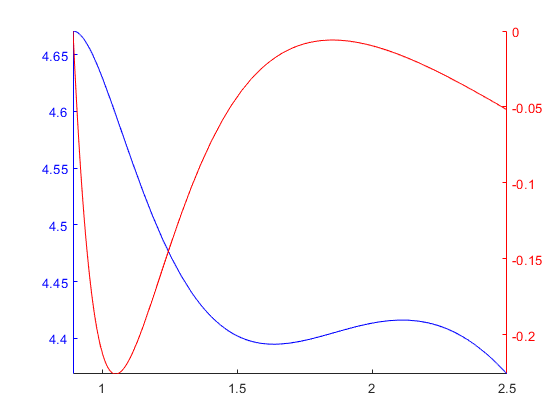}
	\caption{Plots of $F$ (in blue) and $(\mathcal{L}-q)V_{0.8925}$ (in red). The plot starts at the point $\br^*=0.8925$, where $F$ attains its global maximum, but Condition \eqref{decrF1} is not satisfied since $F'$ eventually becomes positive (see Remark \ref{der_f_re}). On the other hand, $(\mathcal{L}-q)V_{0.8925}(x)$ is always negative for $x\geq 0.8925$.}\label{1-barropt}
\end{figure}

Now, if we choose $\mu=2.4$, we get numerically that $F$ attains its global maximum at $\br^{*}=0.9165$. Condition \eqref{decrF1} is not satisfied and there exists $\bar{x}>\br^{*}$ such that $(\mathcal{L}-q)V_{\br^{*}}(\bar{x})>0$, see Figure  \ref{not1optimal}.  In the next section, we will show that for this case the optimal strategy corresponds to a multibarrier strategy.
\begin{figure}[t!]
	\centering
	\includegraphics[scale=0.5]{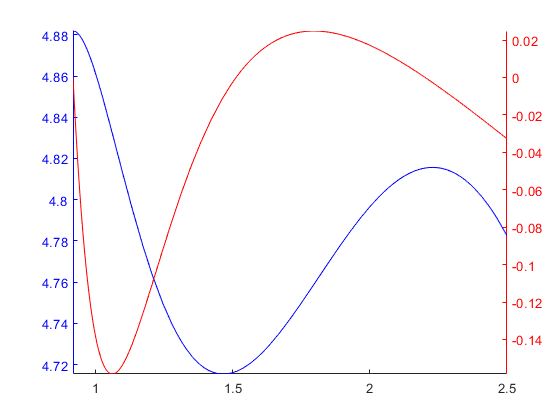}
	\caption{Plots of $F$ (in blue) and $(\mathcal{L}-q)V_{0.9165}$ (in red). The plot starts at the point $\br^*=0.9165$, where $F$ attains its global maximum, but Condition \eqref{decrF1} is not satisfied since $F'$ eventually becomes positive (see Remark \ref{der_f_re}). On the other hand, $(\mathcal{L}-q)V_{0.9165}(\bar{x})$ is positive for some $\bar{x}\geq 0.9165$.}\label{not1optimal}
\end{figure}

\section{Optimality of multibarrier strategies for Brownian motion with drift}\label{multib}

In Subsection \ref{nec_example}, we showed that under a particular choice of parameters,  the one-barrier strategy at level $\br^{*}$ is optimal even when Condition \eqref{decrF1} is not satisfied. However, we can find different choices of parameters under which the HJB inequality given in \eqref{hjb1} is not satisfied for the ER associated with the strategy $\pi_{\br^{*}}$. This suggests that not always the one-barrier strategies are optimal, so it is necessary to find other types of strategies that can achieve optimality.

In this section, we will propose the \textit{$(2n+1)$-barriers strategy} as our candidate for optimality among admissible strategies. Due to the complexity that arises from the jumps of the process $X$ when working with multibarrier strategies, in the remainder of the paper, we will assume that $X$ is a Brownian motion with drift, that is, $\Pi\equiv0$. Proofs of the results of this section are provided in Appendix \ref{appendixmulti}.

We denote the $(2n+1)$-barriers by $\bu=\{\br_{i}\}_{i=1}^{2n+1}$ where $0\leq \br_{1}< \br_{2}<\dots< \br_{2n+1}$, and describe the $\bu$-barrier strategy as follows: If the process lies above $\br_{2n+1}$, push the process to the level $\br_{2n+1}$; if it lies between $(\br_{2k},\br_{2k+1})$, with $k\in\{0,1,\dots,n\}$, do nothing, and finally if it lies between $[\br_{2k+1},\br_{2k+2}]$, with $k\in\{0,1,\dots,n-1\}$, push the process to $\br_{2k+1}$. We denote the $\bu$-barrier strategy by $L^{\bu}$ and the controlled process by $X^{\bu} = X - L^{\bu}$. Formally, if $\bu=\{\br_{i}\}_{i=1}^{2n+1}$, we define the controlled process $X^{\bu}=X-L^{\bu}$ under $\Pro_x$ as follows:  Assume that $x\in[\br_{2k},\br_{2k+2})$ for $k=0,1,\dots,n$, where we take $\br_{0}=0$ and $\br_{2n+2}=\infty$.  
\begin{enumerate}
\item Let $L^{\bu}_t=\left(\sup\limits_{0\leq s\leq t}\{X_s-\br_{2k+1}\}\right)\vee0$ and $X^k=X-L^\bu$, hence $\{X^k_t:t\geq0\}$ is a process reflected at $\br_{2k+1}$ that starts at $x$. Define $\sigma_{k}=\inf\{t\geq0:X^k_t=\br_{2k}\}$, then $X^{\bu}_t=X^k_t$ for $0\leq t\leq\sigma_k$.
\item For $i=k-1,k-2,\ldots,1,0$, define $Z^i_t=X_t-L^{\bu}_{\sigma_{i+1}-}$, so that $Z^i_{\sigma_{i+1}}=\br_{2i+2}$, and let $L^{\bu}_t=L^{\bu}_{\sigma_{i+1}-}+\left(\sup\limits_{\sigma_{i+1}\leq s\leq t}\{Z^i_s-\br_{2i+1}\}\right)\vee0$ and $X^i=Z^i-(L^{\bu}-L^{\bu}_{\sigma_{i+1}-})$. Hence $\{X^i_t:t\geq\sigma_{i+1}\}$ is a process reflected at $\br_{2i+1}$ that starts at $\br_{2i+1}$. Define $\sigma_i=\inf\{t\geq\sigma_{i+1}:X^i_t=\br_{2i}\}$ and $X^{\bu}_t=X^i_t$  for $\sigma_{i+1}\leq t\leq\sigma_i$.
\end{enumerate}

\subsection{Computation of the ER}

Consider the ER associated with the $\bu$-barrier strategy, given by
\begin{align}\label{tn}
V_{\bu}(x)=\E_x\left[\int_0^{\tau_{\bu}}\expo^{-qt}g(X^{\bu}_{t-})\circ\ud L^{\bu}_t\right],\quad\text{for $x\geq0$,}
\end{align}
where $\tau_{\bu}\eqdef \displaystyle\inf\{t\geq0: X^{\bu}_{t}<0\}$. The next result gives the explicit estimations of $V_{\bu}$ in terms of the $q$-scale functions that are associated with the process $X$.
\begin{proposition}\label{c5}
	Let $V_{\bu}$ be as in \eqref{tn}. Then
	\begin{equation}\label{vf5bar}
	V_{\bu}(x)=
	\begin{cases} 
	\displaystyle \frac{g(\br_{1} )}{W^{(q)\prime}(\br_{1} )}W^{(q)}(x)&\mbox{if } x < \br_{1} ,\\
	H(x;\br_{1}) & \mbox{if } x \in [\br_{1} ,\br_{2}],\\
	\qquad\vdots&\qquad\vdots\\
	\phi(x;\bu_{2k+1}) &\mbox{if } x \in (\br_{2k},\br_{2k+1}), \\
	H(x;\bu_{2k+1})&\mbox{if } x \in [\br_{2k+1},\br_{2k+2}],\\
	\qquad\vdots&\qquad\vdots\\
	\phi(x;\bu) &\mbox{if } x \in (\br_{2n},\br_{2n+1}),\\
	H(x;\bu)&\mbox{if } x \in [\br_{2n+1},\infty),
	\end{cases}
	\end{equation}
	where  $H(x;\br_{1})$ is defined in \eqref{H1}, $\bu_{2k+1}\eqdef\{\br_{i}\}_{i=1}^{2k+1}$ for each $k\in\{1,2,\dots,n\}$, and 
	\begin{align}\
	\phi(x;\bu_{2k+1})&:=H(\br_{2k};\bu_{2k-1})Z^{(q)}(x-\br_{2k})\label{phin}\\
	&\quad+W^{(q)}(x-\br_{2k})\bigg(\frac{g(\br_{2k+1})-qH(\br_{2k};\bu_{2k-1})W^{(q)}(\br_{2k+1}-\br_{2k})}{W^{(q)\prime}(\br_{2k+1}-\br_{2k})}\bigg),\notag\\
	H(x;\bu_{2k+1})&:=G(x)-G(\br_{2k+1})+\phi(\br_{2k+1};\bu_{2k+1}).\label{Hn}
	\end{align}
\end{proposition}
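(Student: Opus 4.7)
The plan is to proceed by induction on $n$, with the base case $n=0$ supplied by Proposition~\ref{aux2}. Fix $n \geq 1$ and assume \eqref{vf5bar} holds for the $(2n-1)$-barrier strategy $\bu_{2n-1}=\{\br_i\}_{i=1}^{2n-1}$. The key structural observation is that under the full strategy $\bu$, when started from $x \in [0, \br_{2n})$ the controlled process $X^{\bu}$ stays bounded above by $\br_{2n-1}$ before ruin, so the additional barriers $\br_{2n}, \br_{2n+1}$ never activate and the dynamics agree pathwise with those of $\bu_{2n-1}$. Hence $V_{\bu}(x) = V_{\bu_{2n-1}}(x)$ for $x \in [0, \br_{2n})$, and the induction hypothesis delivers \eqref{vf5bar} on every interval $[\br_{2k}, \br_{2k+2}]$ with $k \leq n-1$. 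What remains is to verify \eqref{vf5bar} on $(\br_{2n}, \br_{2n+1})$ and on $[\br_{2n+1}, \infty)$.

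For $x \geq \br_{2n+1}$ the strategy forces a jump of size $x - \br_{2n+1}$ at time $0$, so by the convention in \eqref{p1} the reward contributed by this jump is $\int_{\br_{2n+1}}^{x} g(y)\,\der y = G(x) - G(\br_{2n+1})$, while the residual value after the jump equals $V_{\bu}(\br_{2n+1})$; this matches $H(x;\bu)$ in \eqref{Hn} provided we identify $V_{\bu}(\br_{2n+1}) = \phi(\br_{2n+1};\bu)$. For $x \in (\br_{2n}, \br_{2n+1})$ the controlled process coincides with the uncontrolled $X$ until the exit time $T := \tau_{\br_{2n+1}}^{+} \wedge \tau_{\br_{2n}}^{-}$, during which no reward is generated. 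The strong Markov property combined with the exit identities \eqref{l1} (applied with $a=\br_{2n+1}$ and $b=\br_{2n}$, using translation invariance of $X$) gives
\begin{equation*}
V_{\bu}(x) = \frac{W^{(q)}(x-\br_{2n})}{W^{(q)}(\br_{2n+1}-\br_{2n})}\, V_{\bu}(\br_{2n+1}) + \bigg[Z^{(q)}(x-\br_{2n}) - Z^{(q)}(\br_{2n+1}-\br_{2n})\,\frac{W^{(q)}(x-\br_{2n})}{W^{(q)}(\br_{2n+1}-\br_{2n})}\bigg] H(\br_{2n};\bu_{2n-1}),
\end{equation*}
where $V_{\bu}(\br_{2n}) = H(\br_{2n};\bu_{2n-1})$ is read off the inductive formula in the topmost region of $\bu_{2n-1}$.

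The main obstacle is then the explicit computation of $V_{\bu}(\br_{2n+1})$. I would handle it by adapting the reflection argument behind Proposition~\ref{aux2} to the interval $[\br_{2n}, \br_{2n+1}]$: starting from $\br_{2n+1}$, the process is reflected at $\br_{2n+1}$ and pays the continuous reward $g(\br_{2n+1})\,\der L^{\bu}_t$ until it first reaches $\br_{2n}$, whereupon the terminal payoff $H(\br_{2n};\bu_{2n-1})$ is received by induction. Equivalently, $V_{\bu}$ restricted to $(\br_{2n}, \br_{2n+1})$ must solve $(\L-q)V=0$ with boundary data $V(\br_{2n}) = H(\br_{2n};\bu_{2n-1})$ and the smooth-fit condition $V'(\br_{2n+1}-) = g(\br_{2n+1})$; the latter follows from continuity of $V_{\bu}'$ at $\br_{2n+1}$ together with $V_{\bu}'(y) = g(y)$ for $y>\br_{2n+1}$, which is immediate from the formula in the jump region above. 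Expanding the solution in the basis $\{W^{(q)}(\cdot-\br_{2n}), Z^{(q)}(\cdot-\br_{2n})\}$ (both harmonic by \eqref{harmonic_sf}) and using $Z^{(q)\prime}=qW^{(q)}$ to enforce the reflecting boundary condition produces exactly $V_{\bu}(\br_{2n+1}) = \phi(\br_{2n+1};\bu)$ as in \eqref{phin} specialised to $x=\br_{2n+1}$. Substituting this back into the displayed strong Markov expression, the terms proportional to $Z^{(q)}(\br_{2n+1}-\br_{2n})\,W^{(q)}(x-\br_{2n})$ cancel and the remainder collapses to \eqref{phin} for general $x \in (\br_{2n},\br_{2n+1})$; \eqref{Hn} then follows immediately from the jump argument in the region above, closing the induction.
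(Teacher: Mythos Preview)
Your proposal follows essentially the same inductive strategy as the paper: strong Markov at the exit time from $(\br_{2n},\br_{2n+1})$ together with the two-sided exit identities \eqref{l1}, the jump at time $0$ above the top barrier, and a reflection argument at $\br_{2n+1}$. The one point where you diverge from the paper is the computation of $V_{\bu}(\br_{2n+1})$. The paper carries out your probabilistic sketch explicitly: it writes
\[
V_{\bu}(\br_{2n+1})
= g(\br_{2n+1})\,\E_{\br_{2n+1}}\!\bigg[\int_0^{\sigma}\expo^{-qs}\,\der L^{\br_{2n+1}}_s\bigg]
+ H(\br_{2n};\bu_{2n-1})\,\E_{\br_{2n+1}}\big[\expo^{-q\sigma}\big],
\]
with $\sigma$ the first passage of the reflected process to $\br_{2n}$, evaluates the first expectation by the excursion argument of Proposition~\ref{aux2} (giving $W^{(q)}(\br_{2n+1}-\br_{2n})/W^{(q)\prime}(\br_{2n+1}-\br_{2n})$), and takes the Laplace transform of $\sigma$ from identity (3.10) of \cite{AvPaPi07}, namely $Z^{(q)}(\br_{2n+1}-\br_{2n})-q\,[W^{(q)}(\br_{2n+1}-\br_{2n})]^{2}/W^{(q)\prime}(\br_{2n+1}-\br_{2n})$. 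Plugging these two formulas in yields $\phi(\br_{2n+1};\bu)$ directly.

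Your alternative ODE/smooth-fit route, as written, has a circularity: you impose $V_{\bu}'(\br_{2n+1}-)=g(\br_{2n+1})$ by invoking continuity of $V_{\bu}'$ at $\br_{2n+1}$, but in the paper that $C^{1}$ property is \emph{deduced from} the explicit formula \eqref{vf5bar} (see the regularity discussion just after Proposition~\ref{c5}), not assumed in proving it. Without an a priori regularity argument for the reflected Brownian value function---which the paper does not supply---the smooth-fit condition cannot serve as an input here. So either stay with your first, probabilistic, approach and make the two fluctuation identities explicit (exactly as the paper does), or add an independent justification of the $C^{1}$ property at the reflecting barrier before invoking it.
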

Using \eqref{sf} and \eqref{vf5bar}--\eqref{Hn},  we have for  $k=1,\dots,n$, the following conditions:
\begin{align*}
V_{\bu}(\br_1+)&=H(\br_1;\br_1)=g(\br_1)\frac{W^{(q)}(\br_1)}{W^{(q)\prime}(\br_1)}=V_{\bu}(\br_1-),\notag\\
V_{\bu}(\br_{2k}+)&=\phi(\br_{2k};\bu_{2k+1})=H(\br_{2k};\bu_{2k-1})=V_{\bu}(\br_{2k}-),\\
V_{\bu}(\br_{2k+1}+)&=H(\br_{2k+1};\bu_{2k+1})=\phi(\br_{2k+1};\bu_{2k+1})=V_{\bu}(\br_{2k+1}-).
\end{align*}
Additionally, by differentiating \eqref{vf5bar}, we obtain for $k=1,2,\dots,n$:
\begin{align*}
	V_{\bu}'(\br_{1}-)&=\lim_{x\uparrow\br_{1}}g(\br_{1})\frac{W^{(q)\prime}(x)}{W^{(q)\prime}(\br_{1})}=g(\br_{1})=V_{\bu}'(\br_{1}+),\\
	V_{\bu}'(\br_{2k+1}-)&=\lim_{x\uparrow\br_{2k+1}}\bigg[qH(\br_{2k};\bu_{2k-1})W^{(q)}(x-\br_{2k})\\
	&\quad+W^{(q)\prime}(x-\br_{2k})\bigg(\frac{g(\br_{2k+1})-qH(\br_{2k};\bu_{2k-1})W^{(q)}(\br_{2k+1}-\br_{2k})}{W^{(q)\prime}(\br_{2k+1}-\br_{2k})}\bigg)\bigg]\notag\\&=g(\br_{2k+1})=V_{\bu}'(\br_{2k+1}+).
\end{align*}
Hence, $V_{\bu}\in C((0,\infty))\cap C^1((0,\infty)\setminus\{\br_{2k}\}_{k=1}^n)\cap C^{2}((0,\infty)\setminus\bu))$ for any $\bu$. The smooth fit principle requires finding barriers that increase the regularity of this function, which is done next.

\subsection{Selection of an optimal multibarrier strategy}

To select a $\bu^{*}$ as a candidate for being an  optimal barrier,  let us  define the auxiliary function  
\begin{equation}\label{Fn1}
F(v,z;\bu_{2k+1}):=\frac{g(z)-qH(v;\bu_{2k+1})W^{(q)}(z-v)}{W^{(q)\prime}(z-v)}, \quad \text{for}\ (v,z)\in\mathcal{A}_{2k+1},
\end{equation}
where $\bu_{2k+1}=\{\br_{i}\}_{i=1}^{2k+1}$ is  a non-negative increasing sequence and $\mathcal{A}_{2k+1}\eqdef\{(v,z)\in[\br_{2k+1},\infty)\times [\br_{2k+1},\infty):v< z\}$, with $k\in\{0,1,\dots,n\}$ and $n\geq0$.  Since $W^{(q)}\in  C^{\infty}((0,\infty))$, then $F(\cdot,\cdot;\bu_{2k+1})\in  C(\bar{\mathcal{A}}_{2k+1})$, where $\bar{\mathcal{A}}_{2k+1}$ is the closure set of $\mathcal{A}_{2k+1}$, and it is as smooth as $g$ on $\mathcal{A}_{2k+1}$.

The following are two very useful properties of the function $F(\cdot,\cdot,\bu_{2k+1})$ defined in \eqref{Fn1}: 
\begin{enumerate}
	\item[(1)] By \eqref{sf} together with \eqref{W0}, it is easy to verify that for  $k\in\{0,1,\dots,n\}$ and $v\geq\br_{2k+1}$
	\begin{align}\label{eq9p}
	F(v,v+;\bu_{2k+1}):=\lim_{z\downarrow v}F(v,z;\bu_{2k+1})=\frac{\sigma^2}{2}g(v).
	\end{align}
\item[(2)] Now, note that for $z>v$
	\begin{align}\label{dnp}
	&\partial_{z}F(v,z;\bu_{2k+1})=\dfrac{\overline{F}(v,z;\bu_{2k+1})}{W^{(q)\prime}(z-v)},
	\end{align} 
	where
	\begin{equation}\label{dnp2}
	\overline{F}(v,z;\bu_{2k+1})\eqdef g'(z)-qH(v;\bu_{2k+1})W^{(q)\prime}(z-v)-W^{(q)\prime\prime}(z-v)F(v,z;\bu_{2k+1}).
	\end{equation} 
	From \eqref{H1}, \eqref{generator} and \eqref{Hn}, notice that $(\mathcal{L}-q)H(v;\bu_{2k+1})$ can be rewritten as 
	\begin{equation}\label{eqH1}
	(\mathcal{L}-q)H(v;\bu_{2k+1})=\dfrac{\sigma^{2} }{2}g'(v)+\mu g(v)-qH(v;\bu_{2k+1}).
	\end{equation}
	Hence, letting $z\downarrow v$ in \eqref{dnp} and using \eqref{W0} and \eqref{eq1} together with \eqref{dnp2} we obtain
	\begin{align}\label{eq11p}
	\partial_z F(v,v+;\bu_{2k+1})&=\frac{\sigma^2}{2} \overline{F}(v,v+;\bu_{2k+1})\notag\\
	&=\frac{\sigma^2}{2}\left(g'(v)-q\frac{2}{\sigma^2}H(v;\bu_{2k+1})+\frac{4\mu}{\sigma^4}F(v,v+;\bu_{2k+1})\right)\notag\\
	&=(\mathcal{L}-q)H(v;\bu_{2k+1}),
		\end{align}
	where in the last equality we used \eqref{eqH1} and \eqref{eq9p}.
\end{enumerate}

Next, we present a method to select a candidate $\bu^{*}$ for being an optimal multibarrier. Let us first start with the case $n=1$.

\subsection*{Existence of $\bu^{*}_{3}$} 
First, define $\br^{*}_{1}=\br^{*}<\infty$ as in \eqref{lower_barrier}. Now, note that
	\begin{equation}\label{h1}
		(\L-q)H(v_{1};\br^{*}_{1})>0\quad\text{for some}\ v_{1}>\br^{*}_{1},
	\end{equation} 
otherwise, by Remark \ref{der_f_re},  $\bu^{*}=\{\br^{*}_{1}\}$ is an optimal barrier. By definition of $\br^{*}_{1}$, there exists $\br^{(1)}_{1}>\br^{*}_{1}$ such that $F'(v)<0$ for $v\in(\br^{*}_{1},\br^{(1)}_{1})$, that is
\begin{equation}\label{W5}
	g'(v)<W^{(q)\prime\prime}(v)\dfrac{g(v)}{W^{(q)\prime}(v)},
\end{equation}
and
\begin{equation}\label{ex2}
	F(v)>F(z)\quad \text{for}\ v\in(\br^{*}_{1},\br^{(1)}_{1})\ \text{and}\ z>v.
\end{equation}

\begin{proposition}\label{n11}
	\begin{enumerate}
		\item[{\it (i)}] If $F'(\br^{*}_{1})=0$, then
		\begin{equation}\label{HJB11}
			(\mathcal{L}-q)H(\br^{*}_{1};\br^{*}_{1})=0.
		\end{equation} 
		\item[{\it (ii)}] For each $v\in (\br^{*}_{1},\br^{(1)}_{1})$,
		\begin{equation}\label{Hj21}
			(\mathcal{L}-q)H(v;\br^{*}_{1})
			<0.
		\end{equation}
	\end{enumerate}
\end{proposition}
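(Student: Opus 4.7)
The plan is to rewrite $(\mathcal{L}-q)H(v;\br^{*}_{1})$ as a sum of two pieces whose signs become transparent once the defining extremality of $\br^{*}_{1}$ and the local information on $F'$ supplied by \eqref{W5} are invoked. The single ingredient that fuses everything is the $q$-harmonicity of the scale function, $(\mathcal{L}-q)W^{(q)}=0$ on $(0,\infty)$.

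First, I would recall from \eqref{eqH1} (specialized to $k=0$ and $\bu_{1}=\br^{*}_{1}$) that
\[
(\mathcal{L}-q)H(v;\br^{*}_{1})=\frac{\sigma^{2}}{2}g'(v)+\mu g(v)-qH(v;\br^{*}_{1}).
\]
Multiplying $\frac{\sigma^{2}}{2}W^{(q)\prime\prime}(v)+\mu W^{(q)\prime}(v)=qW^{(q)}(v)$ through by $F(v)=g(v)/W^{(q)\prime}(v)$ and combining with the elementary identity $F'(v)W^{(q)\prime}(v)=g'(v)-g(v)W^{(q)\prime\prime}(v)/W^{(q)\prime}(v)$, a short algebraic manipulation should produce the master identity
\[
\frac{\sigma^{2}}{2}g'(v)+\mu g(v)=\frac{\sigma^{2}}{2}F'(v)W^{(q)\prime}(v)+qF(v)W^{(q)}(v).
\]
Setting $Q(v):=F(v)W^{(q)}(v)-H(v;\br^{*}_{1})$, this will give the decomposition
\[
(\mathcal{L}-q)H(v;\br^{*}_{1})=\frac{\sigma^{2}}{2}F'(v)W^{(q)\prime}(v)+qQ(v).
\]

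Next, I would control $Q$. Using \eqref{H1} together with the relation $F(v)W^{(q)\prime}(v)=g(v)$ one checks immediately that $Q(\br^{*}_{1})=0$ and $Q'(v)=F'(v)W^{(q)}(v)$. With this in hand, part \emph{(i)} drops out: if $F'(\br^{*}_{1})=0$, the first term of the decomposition vanishes while $Q(\br^{*}_{1})=0$ kills the second, giving \eqref{HJB11}. For part \emph{(ii)}, on $(\br^{*}_{1},\br^{(1)}_{1})$ the inequality \eqref{W5} is exactly $F'(v)<0$; since $W^{(q)}$ and $W^{(q)\prime}$ are strictly positive on $(0,\infty)$, the first term in the decomposition is strictly negative, and $Q'(v)=F'(v)W^{(q)}(v)<0$, so integrating from $Q(\br^{*}_{1})=0$ will yield $Q(v)<0$ as well; adding the two strictly negative contributions establishes \eqref{Hj21}.

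I do not anticipate a serious obstacle: the substantive step is spotting the decomposition, after which both conclusions follow mechanically. The only point to verify carefully in a full write-up is the cancellation leading to $Q'(v)=F'(v)W^{(q)}(v)$, which exploits the identity $F(v)W^{(q)\prime}(v)-g(v)=0$ so that the two ``derivative'' terms collapse to a single one.
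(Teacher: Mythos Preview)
Your proposal is correct and follows essentially the same approach as the paper: both arguments hinge on the $q$-harmonicity identity $\frac{\sigma^{2}}{2}W^{(q)\prime\prime}+\mu W^{(q)\prime}=qW^{(q)}$, the fact that $H(\br^{*}_{1};\br^{*}_{1})=F(\br^{*}_{1})W^{(q)}(\br^{*}_{1})$, and the sign of $F'$ on $(\br^{*}_{1},\br^{(1)}_{1})$. The only difference is presentational: you package the argument into the single exact identity $(\mathcal{L}-q)H(v;\br^{*}_{1})=\frac{\sigma^{2}}{2}F'(v)W^{(q)\prime}(v)+qQ(v)$ and then bound each summand, whereas the paper runs the same computation as a chain of two strict inequalities via the intermediate quantity $\Xi(v)=F(v)W^{(q)}(v)$ (your $-Q$ is exactly the paper's $H-\Xi$).
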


The previous result and \eqref{h1} imply that there exists $\tilde{v}>\br^{(1)}_{1}$ such that
		\begin{equation}\label{ineq1.0.0}
			\partial_{z}F({v},v+;\br^{*}_{1})=(\mathcal{L}-q)H(v;\br^{*}_{1})
			\begin{cases}
				\leq0,&\text{if}\ v\in(\tilde{v}-\varepsilon_1,\tilde{v}),\\
				=0,&\text{if}\ v=\tilde{v},\\
				>0,&\text{if}\ v\in(\tilde{v},\tilde{v}+\varepsilon_1),
			\end{cases}
		\end{equation}	
for some $\varepsilon_1>0$. Hence, the set
\begin{equation}\label{P1}
	\mathcal{P}_{1}\eqdef\{\tilde{v}>\br^{(1)}_{1}: \eqref{ineq1.0.0}\ \text{is true}\ \text{for some}\ \varepsilon_1>0\}
\end{equation}
is not empty. Let $\Cr_{1}\eqdef\min\mathcal{P}_{1}$, which is attained due to the smoothness of $g$.

\begin{lemma}\label{propm1}
	There exists   $\hat{\varepsilon}_{1}>0$, such that
	\begin{equation}\label{par11}
		\partial_{z}F(\Cr_{1},z;\br^{*}_{1})>0,\quad \text{for}\  z\in(\Cr_{1},\Cr_{1}+\hat{\varepsilon}_{1}).
	\end{equation}
\end{lemma}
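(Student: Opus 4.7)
The plan is to convert \eqref{par11} into a statement about the sign of $\overline{F}(\Cr_{1},z;\br^{*}_{1})$ on a right neighborhood of $\Cr_{1}$, show that this quantity vanishes at $z=\Cr_{1}^{+}$, compute its first $z$-derivative there in closed form, and conclude by a first-order Taylor expansion whose leading coefficient is controlled by $\varphi'(\Cr_{1})$, where $\varphi(v):=(\mathcal{L}-q)H(v;\br^{*}_{1})$.

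By \eqref{dnp} and the positivity of $W^{(q)\prime}$ on $(0,\infty)$, it suffices to produce $\hat{\varepsilon}_{1}>0$ with $\overline{F}(\Cr_{1},z;\br^{*}_{1})>0$ on $(\Cr_{1},\Cr_{1}+\hat{\varepsilon}_{1})$. Combining \eqref{dnp} with \eqref{eq11p}, the diagonal boundary value reads $\overline{F}(v,v+;\br^{*}_{1})=W^{(q)\prime}(0+)\,\varphi(v)$, and the defining property $\Cr_{1}\in\mathcal{P}_{1}$ yields $\varphi(\Cr_{1})=0$, so $\overline{F}(\Cr_{1},\Cr_{1}+;\br^{*}_{1})=0$. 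Differentiating \eqref{dnp2} in $z$ and letting $z\downarrow\Cr_{1}$, and inserting the boundary values $W^{(q)\prime}(0+)=2/\sigma^{2}$, $W^{(q)\prime\prime}(0+)=-4\mu/\sigma^{4}$, $W^{(q)\prime\prime\prime}(0+)=(8\mu^{2}+4q\sigma^{2})/\sigma^{6}$ (computed from \eqref{sf}), the identity $F(\Cr_{1},\Cr_{1}+;\br^{*}_{1})=\tfrac{\sigma^{2}}{2}g(\Cr_{1})$ from \eqref{eq9p}, $\partial_{z}F(\Cr_{1},\Cr_{1}+;\br^{*}_{1})=0$, and the relation $qH(\Cr_{1};\br^{*}_{1})=\tfrac{\sigma^{2}}{2}g'(\Cr_{1})+\mu g(\Cr_{1})$ (a rewriting of $\varphi(\Cr_{1})=0$), a routine simplification collapses the expression to
\[
\partial_{z}\overline{F}(\Cr_{1},\Cr_{1}+;\br^{*}_{1})=\frac{2}{\sigma^{2}}\Big(\tfrac{\sigma^{2}}{2}g''(\Cr_{1})+\mu g'(\Cr_{1})-qg(\Cr_{1})\Big)=\frac{2}{\sigma^{2}}\varphi'(\Cr_{1}).
\]

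The minimality $\Cr_{1}=\min\mathcal{P}_{1}$ together with the asymmetric sign change prescribed by \eqref{ineq1.0.0} (i.e.\ $\varphi\leq 0$ on $(\Cr_{1}-\varepsilon_{1},\Cr_{1})$ and $\varphi>0$ on $(\Cr_{1},\Cr_{1}+\varepsilon_{1})$) immediately yields $\varphi'(\Cr_{1})\geq 0$. The strict inequality $\varphi'(\Cr_{1})>0$ is obtained by excluding higher-order vanishing: if $\varphi'(\Cr_{1})=0$, then a $C^{2}$-Taylor expansion of $\varphi$ at $\Cr_{1}$ combined with \eqref{ineq1.0.0} first forces $\varphi''(\Cr_{1})=0$, and the remaining degeneracy is ruled out by appealing to the assumed smoothness of $g$. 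With $\varphi'(\Cr_{1})>0$ established, a first-order Taylor expansion of $\overline{F}(\Cr_{1},\cdot;\br^{*}_{1})$ at $z=\Cr_{1}^{+}$, starting from $\overline{F}(\Cr_{1},\Cr_{1}+;\br^{*}_{1})=0$ with strictly positive slope $\tfrac{2}{\sigma^{2}}\varphi'(\Cr_{1})$, yields $\overline{F}(\Cr_{1},z;\br^{*}_{1})>0$ on some $(\Cr_{1},\Cr_{1}+\hat{\varepsilon}_{1})$, hence \eqref{par11}.

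The chief technical obstacle is the strict positivity $\varphi'(\Cr_{1})>0$: the sign-change characterization of $\Cr_{1}$ alone delivers only the weak inequality, and excluding a higher-order zero of $\varphi$ at $\Cr_{1}$ must exploit the full asymmetric structure of \eqref{ineq1.0.0} together with the smoothness of $g$. Once this is settled, the derivative computation leading to $\partial_{z}\overline{F}(\Cr_{1},\Cr_{1}+;\br^{*}_{1})=\tfrac{2}{\sigma^{2}}\varphi'(\Cr_{1})$ is a routine but delicate calculation with the explicit Brownian scale functions in \eqref{sf}.
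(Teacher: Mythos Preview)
Your approach is essentially the paper's: both reduce \eqref{par11} to $\overline{F}(\Cr_{1},z;\br^{*}_{1})>0$ on a right neighborhood (the paper's auxiliary function $\Gamma$ is exactly $\overline{F}$ rewritten via the ODE $\tfrac{\sigma^{2}}{2}W^{(q)\prime\prime}+\mu W^{(q)\prime}=qW^{(q)}$), both observe the boundary value vanishes by \eqref{ineq1.0.0} and \eqref{eq11p}, and both compute the $z$-derivative at the diagonal to be
\[
\partial_{z}\overline{F}(\Cr_{1},\Cr_{1}+;\br^{*}_{1})=\Gamma'(\Cr_{1})=g''(\Cr_{1})+\tfrac{2\mu}{\sigma^{2}}g'(\Cr_{1})-\tfrac{2q}{\sigma^{2}}g(\Cr_{1})=\tfrac{2}{\sigma^{2}}\varphi'(\Cr_{1}).
\]
Your direct differentiation of \eqref{dnp2} with the explicit values $W^{(q)\prime}(0+),W^{(q)\prime\prime}(0+),W^{(q)\prime\prime\prime}(0+)$ is a legitimate alternative to the paper's route through $\Gamma$ and yields the same expression.

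The genuine gap is in the step $\varphi'(\Cr_{1})>0$. The sign pattern in \eqref{ineq1.0.0} yields only $\varphi'(\Cr_{1})\geq 0$; your cascade ($\varphi'(\Cr_{1})=0\Rightarrow\varphi''(\Cr_{1})=0$) is correct, but the closing claim that ``the remaining degeneracy is ruled out by appealing to the assumed smoothness of $g$'' is not a proof. For instance $\varphi(v)=(v-\Cr_{1})^{3}$ is $C^{\infty}$, satisfies \eqref{ineq1.0.0} at $\tilde v=\Cr_{1}$, and has $\varphi'(\Cr_{1})=\varphi''(\Cr_{1})=0$; since $\varphi'=(\mathcal{L}-q)g$, such behaviour is realized by any smooth positive $g$ with $(\mathcal{L}-q)g$ vanishing to second order at $\Cr_{1}$. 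To be fair, the paper's own proof has exactly the same defect: from $\tfrac{2q}{\sigma^{2}}H(z;\br^{*}_{1})<h(z)$ on $(\Cr_{1},\Cr_{1}+\varepsilon_{1})$ with equality at $\Cr_{1}$ one deduces only $\tfrac{2q}{\sigma^{2}}g(\Cr_{1})\leq h'(\Cr_{1})$, not the strict inequality it asserts. So your proposal is faithful to the paper's argument and no weaker than it, but you should not present the strictness as established; a genuine repair would require tracking the first nonvanishing derivative of $\varphi$ at $\Cr_{1}$ and showing it governs the leading behaviour of $z\mapsto\overline{F}(\Cr_{1},z;\br^{*}_{1})$ as well.
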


The lemma implies that the global maximum of $z\mapsto F(\Cr_{1},z;\br^{*}_{1})$, if it exists, is attained at some point $\bar{z}>\Cr_{1}$. Note that under Assumption \ref{assumpg} the global maximum of the functions $z\mapsto F(v,z;\br^{*}_{1})$ exists for all $v>\br^{*}_{1}$. Indeed, this assumption implies that for all $v\geq \br^*_{1}$ we have that $\lim\limits_{z\rightarrow\infty}F(v,z;\br^*_{1})=-\infty$ and therefore the global maximum is attained. Hence, we have that 
\begin{equation}\label{ineq1.10}
		z_{1}(v)\eqdef\sup\{y> v:F(v,y;\br^{*}_{1})\geq F(v,z;\br^{*}_{1}),\ \text{ for all $z> v$}\},\quad \text{for}\ v\in[\br^{*}_{1},\Cr_{1}], 
	\end{equation}
is well defined.  Take the set $\mathcal{D}_{1}$ as follows
	\begin{equation*}
		\mathcal{D}_{1}\eqdef\{v\in[\br_{1}^{*},\Cr_{1}]:v<z_{1}(v)\}.
	\end{equation*}
We see that $\mathcal{D}_{1}$ is the collection of points $v\in[\br^{*}_{1},\Cr_{1}]$ where the map  $z\mapsto F(v,z;\br_{1})$ attains its global maximum at $z=z_{1}(v)>v$. We are now ready to define
\begin{equation*}
		\br_{2}^{*}:=\inf\mathcal{D}_{1}\quad \text{and}\quad \br_{3}^{*}:= z_{1}(\br_{2}^{*}).
\end{equation*}

\begin{proposition}\label{pmax1}
For each $v\in[\br^{*}_{1},\br^{(1)}_{1})$,
	\begin{equation}\label{eq21}
		F(v,v;\br^{*}_{1})>F(v,z;\br^{*}_{1})\quad \text{for}\ z>v.
	\end{equation}
Hence $[\br^{*}_{1},\br^{(1)}_{1})\subset\mathcal{D}_{1}^{c}$, with $\mathcal{D}_{1}^{c}=\{v\in[\br^{*}_{1},\Cr_{1}]:v=z_{1}(v)\}$.
\end{proposition}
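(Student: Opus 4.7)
The plan is to argue by contradiction using an algebraic identity for the Brownian scale functions. A direct expansion from \eqref{sf}, together with the Vieta relations $\Phi(q)\zeta_{1}=2q/\sigma^{2}$ and $\Phi(q)-\zeta_{1}=-2\mu/\sigma^{2}$, yields the identity
\begin{equation*}
qW^{(q)}(a)W^{(q)}(b)+\tfrac{\sigma^{2}}{2}W^{(q)\prime}(a)W^{(q)\prime}(b) = W^{(q)\prime}(a+b),\qquad a,b\geq 0,
\end{equation*}
because the cross-terms cancel by virtue of $q=\tfrac{\sigma^{2}}{2}\Phi(q)\zeta_{1}$.

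Fix $v\in[\br^{*}_{1},\br^{(1)}_{1})$ and recall $F(v,v+;\br^{*}_{1})=\tfrac{\sigma^{2}}{2}g(v)$ from \eqref{eq9p}. Suppose for contradiction that some $z^{*}>v$ satisfies $F(v,z^{*};\br^{*}_{1})=\tfrac{\sigma^{2}}{2}g(v)$. Rearranging \eqref{Fn1} gives
\begin{equation*}
g(z^{*}) = qH(v;\br^{*}_{1})W^{(q)}(z^{*}-v)+\tfrac{\sigma^{2}}{2}g(v)W^{(q)\prime}(z^{*}-v).
\end{equation*}
A short computation using \eqref{H1} and integrating $\tfrac{\der}{\der u}\bigl(F(u)W^{(q)}(u)\bigr)=F'(u)W^{(q)}(u)+g(u)$ from $\br^{*}_{1}$ to $v$ yields
\begin{equation*}
H(v;\br^{*}_{1})-\tfrac{g(v)W^{(q)}(v)}{W^{(q)\prime}(v)} = -\int_{\br^{*}_{1}}^{v}F'(u)W^{(q)}(u)\,\der u \geq 0,
\end{equation*}
with equality iff $v=\br^{*}_{1}$, where the sign comes from \eqref{W5}. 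Plugging this lower bound for $qH(v;\br^{*}_{1})W^{(q)}(z^{*}-v)$ into the preceding display and invoking the scale-function identity with $a=v$, $b=z^{*}-v$ gives
\begin{equation*}
g(z^{*}) \geq \tfrac{g(v)}{W^{(q)\prime}(v)}W^{(q)\prime}(z^{*}),\qquad\text{i.e. } F(z^{*})\geq F(v),
\end{equation*}
with strict inequality whenever $v>\br^{*}_{1}$. For $v\in(\br^{*}_{1},\br^{(1)}_{1})$ this contradicts \eqref{ex2}; for $v=\br^{*}_{1}$ the equality $F(z^{*})=F(\br^{*}_{1})$ with $z^{*}>\br^{*}_{1}$ contradicts the definition of $\br^{*}_{1}$ in \eqref{lower_barrier} as the supremum of the maximisers of $F$.

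Having eliminated every equality point, I upgrade to strict inequality using continuity of $z\mapsto F(v,z;\br^{*}_{1})$ together with $\lim_{z\to\infty}F(v,z;\br^{*}_{1})=-qH(v;\br^{*}_{1})/\Phi(q)\leq 0<\tfrac{\sigma^{2}}{2}g(v)$, a limit that follows from Assumption \ref{assumpg} and the exponential form of $W^{(q)}$ and $W^{(q)\prime}$ in \eqref{sf}. Any $z_{0}>v$ with $F(v,z_{0};\br^{*}_{1})>\tfrac{\sigma^{2}}{2}g(v)$ would, via the intermediate value theorem applied on $(z_{0},\infty)$, produce a forbidden equality point. This proves \eqref{eq21}; the inclusion $[\br^{*}_{1},\br^{(1)}_{1})\subset\mathcal{D}_{1}^{c}$ is then immediate from the definitions of $z_{1}(v)$ in \eqref{ineq1.10} and of $\mathcal{D}_{1}$.

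The main obstacle is isolating the scale-function identity; it is the algebraic miracle that factorises the linear combination $qHW^{(q)}(z^{*}-v)+\tfrac{\sigma^{2}}{2}gW^{(q)\prime}(z^{*}-v)$ through $W^{(q)\prime}(z^{*})$, and is specific to Brownian motion with drift, hinging on the characteristic roots satisfying $\Phi(q)\zeta_{1}=2q/\sigma^{2}$.
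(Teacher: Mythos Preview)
Your proof is correct and takes a genuinely different route from the paper's.

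The paper proceeds constructively: it replaces $g(z)$ in $F(v,z;\br^{*}_{1})$ by the upper bound $g(v)W^{(q)\prime}(z)/W^{(q)\prime}(v)$ coming from \eqref{ex2}, calls the resulting expression $\Lambda(v,z;\br^{*}_{1})$, and then shows $\Lambda$ is decreasing in $z$ by brute-force differentiation. That derivative is simplified using the Brownian identities \eqref{W2}--\eqref{W1}, and the resulting inequality (their \eqref{W4}) is dispatched by another differentiation together with \eqref{W5}. Since $\Lambda(v,v)=\tfrac{\sigma^{2}}{2}g(v)$, the chain $F<\Lambda\le\tfrac{\sigma^{2}}{2}g(v)$ gives \eqref{eq21}; the endpoint $v=\br^{*}_{1}$ is handled by continuity.

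Your argument instead hinges on the single addition formula $qW^{(q)}(a)W^{(q)}(b)+\tfrac{\sigma^{2}}{2}W^{(q)\prime}(a)W^{(q)\prime}(b)=W^{(q)\prime}(a+b)$ and on the integrated identity $H(v;\br^{*}_{1})-F(v)W^{(q)}(v)=-\int_{\br^{*}_{1}}^{v}F'(u)W^{(q)}(u)\,\der u$. These combine to convert a hypothetical equality $F(v,z^{*};\br^{*}_{1})=\tfrac{\sigma^{2}}{2}g(v)$ into $F(z^{*})\geq F(v)$, contradicting \eqref{ex2} (or, at $v=\br^{*}_{1}$, the maximality in \eqref{lower_barrier}); the strict inequality then drops out from continuity and the limit at infinity. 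This is cleaner and more conceptual than the paper's calculation: it isolates exactly where the Brownian structure enters (the Vieta relations on the two exponential rates), and it explains \emph{why} one ends up comparing $F(z^{*})$ with $F(v)$, something the paper arrives at only after two rounds of differentiation. The paper's approach, on the other hand, is more ``mechanical'' and produces a monotone dominating function $\Lambda$; that monotonicity is the template they later reuse in the multibarrier induction (Proposition~\ref{pmaxn}), so their method transfers more directly to the general step.
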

 
Proposition \ref{pmax1} shows that $\br_2^*>\br_1^{(1)}>\br_1^*$. Also note that if  $\br^{*}_{2}\in\mathcal{D}_{1}$ then the trajectory $v\mapsto z_{1}(v)$, given by \eqref{ineq1.10}, has a discontinuity at $v=\br^{*}_{2}$. Before proving one of the main theorems of this section, we will illustrate the previous results with a numerical example. Let us consider $g$ as in \eqref{g1} and the surplus process $X$ given by 
\begin{equation*}
X_{t}=x+2.4 t +2B_{t},\quad\text{for}\ t\geq0,
\end{equation*}
with the discount rate $q=0.2$. We already now that $\br^{*}_{1}=0.9165$, see Figure \ref{not1optimal}. To find the next barriers we need to calculate $\Cr_1=1.5113$ and analyze $F(\cdot,\cdot;\br^{*}_{1})$. Figure \ref{F} shows the value of $z_1(v)$ for $v\geq\br_1^*$ and observe that $z_{1}(v)$ has a discontinuity at $\br^{*}_{2}=1.1496$, the minimum $v$ such that $z_1(v)>v$. Also, $\br_3^*=z_1(\br_2^*)=2.1925$.

\begin{figure}[t!]
	\centering
	\begin{subfigure}[b]{0.44\textwidth}
		\centering
		\includegraphics[scale=0.55]{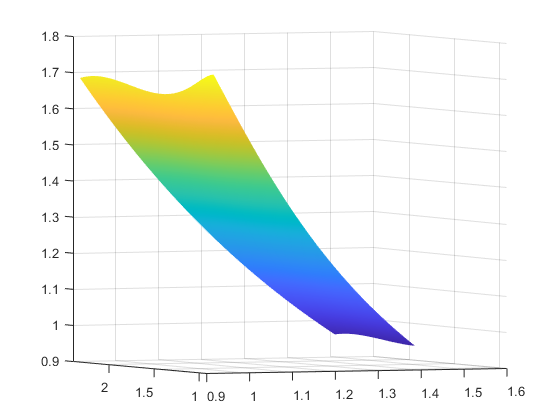}
	\end{subfigure}
	\hfill
	\centering
	\begin{subfigure}[b]{0.55\textwidth}
		\centering
		\includegraphics[scale=0.55]{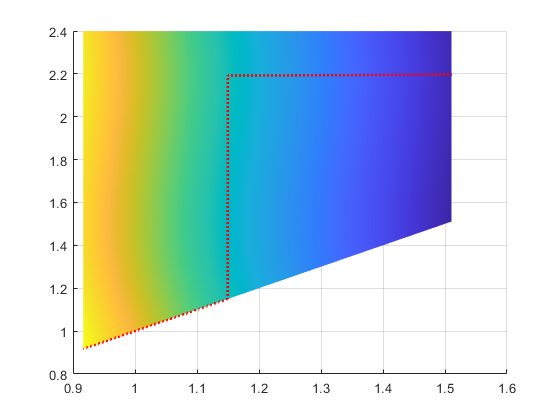}
	\end{subfigure}
	\caption{On the left is the surface of $F(\cdot,\cdot;\br^{*}_{1})$. On the right, the red dotted line is the value of $z_1(v)$ for $v\geq\br^*_1$.}   \label{F} 
\end{figure}

\begin{theorem}\label{prin1}
Assume that there exists $\epsilon_{1}>0$ such that
\begin{equation}\label{cond3}
	\partial_zF(v,v+;\br^*_1)=(\mathcal{L}-q)H(v;\br^*_1)<0,\quad\text{for }v\in(\br^{*}_{2},\br^{*}_{2}+\epsilon_{1}).
\end{equation}

Then 
\begin{equation*}
	\br^{*}_{1}<\br^{*}_{2}<\br^{*}_{3},\quad\br^*_{2}<\Cr_{1}
\end{equation*}
and
\begin{equation}\label{b2}
		F(\br^{*}_{2},\br^{*}_{2};\br^{*}_{1})= F(\br^{*}_{2},\br^{*}_{3};\br^{*}_{1}).
\end{equation}
\end{theorem}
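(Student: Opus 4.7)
To prove the theorem, the plan is to establish the four conclusions in the order $\br^{*}_{1} < \br^{*}_{2}$, $\br^{*}_{2} < \Cr_{1}$, $\br^{*}_{2} < \br^{*}_{3}$, and then the equality \eqref{b2}, combining earlier results (Proposition \ref{pmax1}, Lemma \ref{propm1}) with a limiting argument that exploits hypothesis \eqref{cond3}. For the lower bound $\br^{*}_{1} < \br^{*}_{2}$, Proposition \ref{pmax1} gives $F(v,v;\br^{*}_{1}) > F(v,z;\br^{*}_{1})$ for all $v \in [\br^{*}_{1}, \br^{(1)}_{1})$ and $z > v$, meaning $v \notin \mathcal{D}_{1}$, so $\mathcal{D}_{1} \subseteq [\br^{(1)}_{1}, \Cr_{1}]$ and $\br^{*}_{2} \geq \br^{(1)}_{1} > \br^{*}_{1}$. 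For $\br^{*}_{2} \leq \Cr_{1}$, Lemma \ref{propm1} together with $F(\Cr_{1}, z; \br^{*}_{1}) \to -\infty$ as $z \to \infty$ (Assumption \ref{assumpg}) forces the global max of $F(\Cr_{1}, \cdot;\br^{*}_{1})$ to be attained at some $z_{1}(\Cr_{1}) > \Cr_{1}$, hence $\Cr_{1} \in \mathcal{D}_{1}$; the strict inequality $\br^{*}_{2} < \Cr_{1}$ is then a direct contradiction between \eqref{cond3} (giving $\partial_{z}F(v,v+;\br^{*}_{1}) < 0$ in $(\Cr_{1}, \Cr_{1} + \epsilon_{1})$ if $\br^{*}_{2} = \Cr_{1}$) and the defining property \eqref{ineq1.0.0} of $\Cr_{1} \in \mathcal{P}_{1}$, which yields the opposite sign there.

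Next, to prove $\br^{*}_{2} < \br^{*}_{3}$, I would select $v_{n} \in \mathcal{D}_{1}$ with $v_{n} \downarrow \br^{*}_{2}$ via the infimum characterization. Each $z_{1}(v_{n}) > v_{n}$ attains the global maximum of $F(v_{n}, \cdot; \br^{*}_{1})$, and by Assumption \ref{assumpg} the sequence $(z_{1}(v_{n}))_{n}$ is bounded; pass to a convergent subsequence $z_{1}(v_{n}) \to z^{*} \geq \br^{*}_{2}$. Ruling out $z^{*} = \br^{*}_{2}$ is the crux. The idea is to use
\[
0 \leq F(v_{n}, z_{1}(v_{n}); \br^{*}_{1}) - F(v_{n}, v_{n}+; \br^{*}_{1}) = \int_{v_{n}}^{z_{1}(v_{n})} \partial_{z} F(v_{n}, z; \br^{*}_{1})\, dz,
\]
combining \eqref{cond3} with joint continuity of $\partial_{z} F$ to extract $\rho, c > 0$ such that $\partial_{z} F(v, z; \br^{*}_{1}) < -c$ throughout a uniform rectangle $\{v \in (\br^{*}_{2}, \br^{*}_{2} + \eta],\, z \in (v, v + \rho)\}$ for small $\eta > 0$. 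If $z_{1}(v_{n}) - v_{n} < \rho$, the integral would be strictly negative, contradicting the inequality on the left; hence $z_{1}(v_{n}) \geq v_{n} + \rho$ for large $n$, which forces $z^{*} \geq \br^{*}_{2} + \rho > \br^{*}_{2}$. Continuity of $F$ then gives $F(\br^{*}_{2}, z^{*}; \br^{*}_{1}) \geq F(\br^{*}_{2}, z; \br^{*}_{1})$ for all $z > \br^{*}_{2}$, so $z^{*}$ is an interior maximizer and $\br^{*}_{3} = z_{1}(\br^{*}_{2}) \geq z^{*} > \br^{*}_{2}$.

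Finally, for the equality \eqref{b2}, I would use continuity from the left of the value function $M(v) \eqdef \sup_{z \geq v} F(v,z;\br^{*}_{1})$, which is continuous on $[\br^{*}_{1}, \infty)$ by Berge's maximum theorem (after compactifying via Assumption \ref{assumpg}). For $v \in [\br^{*}_{1}, \br^{*}_{2})$ we have $v \notin \mathcal{D}_{1}$, so $M(v) = F(v, v+; \br^{*}_{1}) = \frac{\sigma^{2}}{2} g(v)$ by \eqref{eq9p}; letting $v \uparrow \br^{*}_{2}$ yields $M(\br^{*}_{2}) = \frac{\sigma^{2}}{2} g(\br^{*}_{2}) = F(\br^{*}_{2}, \br^{*}_{2}; \br^{*}_{1})$. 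Since $\br^{*}_{3} > \br^{*}_{2}$ attains this supremum (as shown above), $F(\br^{*}_{2}, \br^{*}_{3}; \br^{*}_{1}) = M(\br^{*}_{2}) = F(\br^{*}_{2}, \br^{*}_{2}; \br^{*}_{1})$. The hard part of the plan is the uniform lower bound $\rho$ extracted in the second paragraph: \eqref{cond3} is essential precisely because it prevents the strict negativity of $\partial_{z} F(v, v+; \br^{*}_{1})$ from degenerating as $v \downarrow \br^{*}_{2}$, which otherwise could allow $z_{1}(v_{n})$ to pinch down to $\br^{*}_{2}$ and collapse the argument.
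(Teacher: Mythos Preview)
Your argument for $\br^{*}_{1}<\br^{*}_{2}$, for $\br^{*}_{2}<\Cr_{1}$, and your Berge-type derivation of \eqref{b2} are all fine, and indeed cleaner than the paper's somewhat elliptic ``by continuity of $F$'' remark. The overall route (sequence $v_n\in\mathcal{D}_1$, compactness, Berge's maximum theorem) is genuinely different from the paper's, which argues by contradiction through the auxiliary function
\[
\bar\Lambda(v,z;\br^{*}_{1})=\tfrac{\sigma^{2}}{2}g(v)\frac{W^{(q)\prime}(z-v)}{W^{(q)\prime}(z)}+qH(v;\br^{*}_{1})\frac{W^{(q)}(z-v)}{W^{(q)\prime}(z)},
\]
reformulates $v\notin\mathcal{D}_1$ as $\bar\Lambda(v,\cdot;\br^{*}_{1})>F(\cdot)$, and shows this propagates from $v=\br^{*}_{2}$ to a right-neighbourhood via \eqref{cond3}, contradicting $\br^{*}_{2}=\inf\mathcal{D}_1$.

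There is, however, a genuine gap in your step $\br^{*}_{2}<\br^{*}_{3}$. Hypothesis \eqref{cond3} gives $\partial_z F(v,v+;\br^{*}_{1})<0$ only on the \emph{open} interval $(\br^{*}_{2},\br^{*}_{2}+\epsilon_1)$; by continuity the value at $v=\br^{*}_{2}$ may well be $0$. In that case $\partial_z F(v,v+;\br^{*}_{1})\to 0$ as $v\downarrow\br^{*}_{2}$, so no uniform constants $\eta,\rho,c>0$ exist with $\partial_z F(v,z;\br^{*}_{1})<-c$ on your rectangle $\{v\in(\br^{*}_{2},\br^{*}_{2}+\eta],\,z\in(v,v+\rho)\}$: points $(v,z)$ with $v$ near $\br^{*}_{2}$ and $z$ near $v$ force the supremum of $\partial_z F$ there to be $\ge 0$. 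Consequently your integral estimate does not preclude $z_1(v_n)-v_n\to 0$, and the argument that $z^{*}>\br^{*}_{2}$ collapses in exactly the scenario you were worried about. The paper's contradiction approach avoids this because, assuming $\br^{*}_{2}\notin\mathcal{D}_1$, one gets the \emph{strict} inequality $F(\br^{*}_{2},z;\br^{*}_{1})<F(\br^{*}_{2},\br^{*}_{2};\br^{*}_{1})$ for all $z>\br^{*}_{2}$, which provides the extra room to perturb $v$; it is precisely this strict global information, not a uniform diagonal derivative bound, that drives the step. A natural patch of your strategy is to switch to contradiction at this point: if $\br^{*}_{2}\notin\mathcal{D}_1$, combine the strict inequality at $\br^{*}_{2}$ with \eqref{cond3} near the diagonal and the uniform decay at infinity to show $(\br^{*}_{2},\br^{*}_{2}+\bar\epsilon)\cap\mathcal{D}_1=\emptyset$, contradicting $\br^{*}_{2}=\inf\mathcal{D}_1$.
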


\begin{remark}\label{remcond}
Note that condition \eqref{cond3} is satisfied if the function $v\mapsto(\mathcal{L}-q)H(v;\br^*_1)$ does not remain constant in any open interval, and this is equivalent, by differentiating \eqref{eqH1}, to the condition $(\mathcal{L}-q)g\neq0$ a.e.
\end{remark}

Continuing with the numerical example, Figure \ref{3-barropt} shows in blue the function $F(\br^{*}_{2},\cdot;\br^{*}_{1})$, which satisfies $F(\br^{*}_{2},\br^{*}_{2};\br^{*}_{1})=F(\br^{*}_{2},\br^{*}_{3};\br^{*}_{1})=1.3401$, and in red the function $(\mathcal{L}-q)H(\cdot;\bu^{*}_3)$, which is negative for $x>\br_3^*$. Figure \ref{V} shows the value function $V_{\bu^{*}_{3}}$ and the multibarrier $\bu^{*}_{3}=\{\br^{*}_{1},\br^{*}_{2},\br^{*}_{3}\}$.

\begin{figure}[t!]
	\centering
	\includegraphics[scale=0.5]{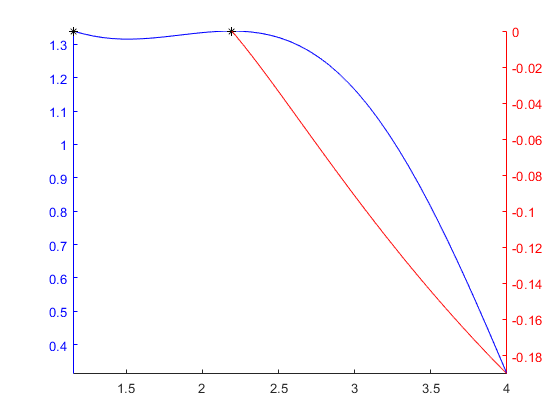}
	\caption{Plots of $F(\br^{*}_{2},\cdot;\br^{*}_{1})$ (in blue) and $(\mathcal{L}-q)H(\cdot;\bu^{*}_3)$ (in red). Black stars show the values of $\br_2 ^*$ and $\br_3 ^*$.}\label{3-barropt}
\end{figure}

\begin{figure}[t!]
	\centering
	\includegraphics[scale=0.5]{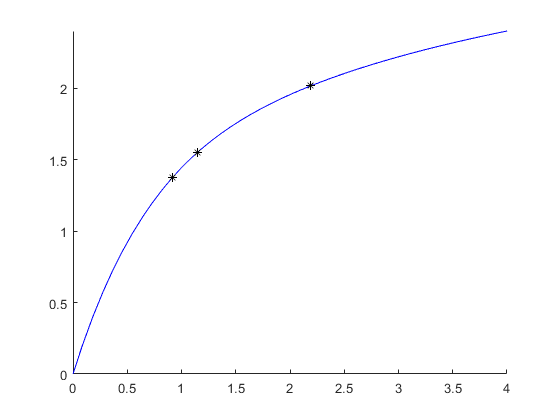}
	\caption{Plot of $V_{\bu^{*}_{3}}$ where the stars show the optimal barriers.}\label{V}
\end{figure}

\subsection*{Existence of $\bu^{*}$}

Now that we have shown the existence of the first three barriers, we can formulate an algorithm that will provide the next two barriers $\br^{*}_{k'}$ with $k'\in\{2k,2k+1\}$ considering that the previous  $\bu^{*}_{2k-1}=\{\br^{*}_{i}\}_{i=1}^{2k-1}$, with $k\in\{2,\dots,n\}$, are obtained.  After that, we will prove the correctness of the algorithm under conditions similar to \eqref{cond3}.
\begin{algo}\label{alg1}
	\begin{enumerate}
		\item\label{algo2} Choose $\br_{1}^{*}$ as in \eqref{lower_barrier}. 
		\item If
		\begin{equation}\label{ineq1}
			(\mathcal{L}-q)H(x;\br^{*}_{1})\leq 0 \quad \text{for}\ x\geq\br^{*}_{1},
		\end{equation}
		let $n=0$ and stop. Otherwise, let $k=1$. 
		\item \label{algo3}Let $\mathcal{P}_{2k-1}$ be the set of points $\tilde{v}>\br^{*}_{2k-1}$ such that for some $\varepsilon_{2k-1}>0$,
		\begin{equation}\label{ineq1.0.0n}
			\partial_{z}F({v},v+;\bu^{*}_{2k-1})=(\mathcal{L}-q)H(v;\bu^{*}_{2k-1})
			\begin{cases}
				\green{\leq}0,&\text{if}\ v\in(\tilde{v}-\varepsilon_{2k-1},\tilde{v}),\\
				=0,&\text{if}\ v=\tilde{v},\\
				>0,&\text{if}\ v\in(\tilde{v},\tilde{v}+\varepsilon_{2k-1}).
			\end{cases}
		\end{equation}	
		Define $\Cr_{2k-1}=\min\mathcal{P}_{2k-1}$   and  
		\begin{equation}\label{Al1n}
			\mathcal{D}_{2k-1}\eqdef\{v\in[\br_{2k-1}^{*},\Cr_{2k-1}]:v<z_{2k-1}(v)\},
		\end{equation}
		where 
		\begin{equation}\label{ineq1.1n}
			z_{2k-1}(v)\eqdef\sup\{y> v:F(v,y;\bu^{*}_{2k-1})\geq F(v,z;\bu^{*}_{2k-1}),\ \text{ for all $z> v$}\}.
		\end{equation}
		Let 
		\begin{equation}\label{algo4}
			\br_{2k}^{*}:=\inf\mathcal{D}_{2k-1}\quad \text{and}\quad \br_{2k+1}^{*}:= z_{2k-1}(\br_{2k}^{*}).
		\end{equation}
		\item If
		\begin{equation}\label{ineq2}
			(\mathcal{L}-q)H(x;\bu^{*}_{2k+1})\leq 0 \quad \text{for}\ x\geq\br^{*}_{2k+1},
		\end{equation}
		choose the barriers $\bu^{*}_{2k+1}$, $n=k$ and stop. Otherwise, let $k=k+1$ and return to (3).
	\end{enumerate}
\end{algo}

To verify the correctness of the algorithm, we assume that, for $k\in\{2,\ldots,n\}$ fixed, the set of barriers  $\bu^{*}_{2k-1}=\{\br^{*}_{i}\}_{i=1}^{2k-1}$ provided by the algorithm,  are well defined, which means that
\begin{align}\label{b1n}
	\begin{split}
		&\br^*_1<\ldots<\br_{2k-3}^{*}<\br_{2k-2}^{*}<\br^{*}_{2k-1},\\
		&F(\br^{*}_{2k-2},\br^{*}_{2k-2};\bu^{*}_{2k-3})= F(\br^{*}_{2k-2},\br^{*}_{2k-1};\bu^{*}_{2k-3}).
	\end{split}
\end{align}
We will show that $\mathcal{P}_{2k-1}$ and $\mathcal{D}_{2k-1}$ are not empty sets, and that $\Cr_{2k-1}$, $\br^{*}_{2k}$ and $\br^{*}_{2k+1}$ are well defined. For that purpose, let us also  suppose that
\begin{equation}\label{h1n}
	(\L-q)H(v_{2k-1};\bu^{*}_{2k-1})>0\quad\text{for some}\ v_{2k-1}>\br^{*}_{2k-1},
\end{equation}
otherwise Algorithm \ref{alg1} would had stop. Now, by definition of $\br^{*}_{2k-1}$ in step (3) of the algorithm, there exists $\br_{2k-1}^{(1)}>\br^{*}_{2k-1}$ such that 
\begin{equation}\label{W5n}
\partial_z F(\br^{*}_{2k-2},v;\bu^{*}_{2k-3})<0\quad\text{for}\ v\in(\br^{*}_{2k-1},\br^{(1)}_{2k-1}),
\end{equation}
with equality at $v=\br^{*}_{2k-1}$. This condition is equivalent to $\overline{F}(\br^{*}_{2k-2},v;\bu^{*}_{2k-3})<0$ with $\overline{F}$ as in \eqref{dnp2}. Moreover, we can choose $\br^{(1)}_{2k-1}$ such that for each $v\in(\br^{*}_{2k-1},\br^{(1)}_{2k-1})$
\begin{equation}\label{ex2n}
F(\br^{*}_{2k-2},z;\bu^{*}_{2k-3})<F(\br^{*}_{2k-2},v;\bu^{*}_{2k-3})\quad\text{for}\ z>v.
\end{equation}
The following result is analogous to Proposition \ref{n11}.

\begin{proposition}\label{n11gen}
	Let $\br^{(1)}_{2k-1}$ be given as before. Then,
	\begin{equation*}
		(\mathcal{L}-q)H(v;\bu^{*}_{2k-1})<0 \quad\text{for}\ v\in(\br^{*}_{2k-1},\br^{(1)}_{2k-1}).
	\end{equation*}
	Hence, $\mathcal{P}_{2k-1}\neq\emptyset$ and $\br^{(1)}_{2k-1}<\Cr_{2k-1}<\infty$. Additionally, $(\mathcal{L}-q)H(\br^{*}_{2k-1};\bu^{*}_{2k-1})=0$.
\end{proposition}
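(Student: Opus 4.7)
The plan is to mirror the structure of Proposition \ref{n11}, with the previous-step data carried through shifted scale functions. Write $a = \br^{*}_{2k-1} - \br^{*}_{2k-2}$, $B = H(\br^{*}_{2k-2};\bu^{*}_{2k-3})$, $C = F(\br^{*}_{2k-2},\br^{*}_{2k-1};\bu^{*}_{2k-3})$, and $w = v - \br^{*}_{2k-2}$. Since $\br^{*}_{2k-1}=z_{2k-3}(\br^{*}_{2k-2})\in(\br^{*}_{2k-2},\infty)$ is an interior maximizer of $z\mapsto F(\br^{*}_{2k-2},z;\bu^{*}_{2k-3})$, the first-order condition $\overline{F}(\br^{*}_{2k-2},\br^{*}_{2k-1};\bu^{*}_{2k-3})=0$ (see \eqref{dnp2}) reads $g'(\br^{*}_{2k-1}) = qBW^{(q)\prime}(a) + W^{(q)\prime\prime}(a)C$, and \eqref{Fn1} gives $g(\br^{*}_{2k-1}) = qBW^{(q)}(a) + W^{(q)\prime}(a)C$. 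For the ``additionally'' claim I would apply \eqref{eqH1} at $v=\br^{*}_{2k-1}$ and insert $H(\br^{*}_{2k-1};\bu^{*}_{2k-1}) = \phi(\br^{*}_{2k-1};\bu^{*}_{2k-1}) = BZ^{(q)}(a) + CW^{(q)}(a)$ coming from \eqref{phin}; using the two identities above, the quantity $(\mathcal{L}-q)H(\br^{*}_{2k-1};\bu^{*}_{2k-1})$ regroups as
\begin{equation*}
qB\bigl[\tfrac{\sigma^{2}}{2}W^{(q)\prime}(a)+\mu W^{(q)}(a)-Z^{(q)}(a)\bigr]+C\bigl[\tfrac{\sigma^{2}}{2}W^{(q)\prime\prime}(a)+\mu W^{(q)\prime}(a)-qW^{(q)}(a)\bigr],
\end{equation*}
and each bracket vanishes by \eqref{harmonic_sf} applied to $Z^{(q)}$ and $W^{(q)}$ on $(0,\infty)$.

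For the strict inequality on $(\br^{*}_{2k-1},\br^{(1)}_{2k-1})$, I would introduce the auxiliary function
\begin{equation*}
h(v):=H(v;\bu^{*}_{2k-1})-BZ^{(q)}(w)-F(\br^{*}_{2k-2},v;\bu^{*}_{2k-3})\,W^{(q)}(w),
\end{equation*}
so that $h(\br^{*}_{2k-1})=0$ by the first step. Using $H'(\cdot;\bu^{*}_{2k-1})=g$ together with the defining identity $F(\br^{*}_{2k-2},v;\bu^{*}_{2k-3})W^{(q)\prime}(w) = g(v)-qBW^{(q)}(w)$, a telescoping cancellation of the $g(v)-qBW^{(q)}(w)$ terms yields $h'(v)=-\partial_{z}F(\br^{*}_{2k-2},v;\bu^{*}_{2k-3})\,W^{(q)}(w)$, which is strictly positive on $(\br^{*}_{2k-1},\br^{(1)}_{2k-1})$ by \eqref{W5n}. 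Hence $H(v;\bu^{*}_{2k-1})>BZ^{(q)}(w)+F(\br^{*}_{2k-2},v;\bu^{*}_{2k-3})W^{(q)}(w)$ on that interval. Complementarily, after rewriting \eqref{W5n} as $g'(v)<qBW^{(q)\prime}(w)+W^{(q)\prime\prime}(w)F(\br^{*}_{2k-2},v;\bu^{*}_{2k-3})$, multiplying by $\sigma^{2}/2$, and adding $\mu g(v) = \mu qBW^{(q)}(w)+\mu W^{(q)\prime}(w)F(\br^{*}_{2k-2},v;\bu^{*}_{2k-3})$, the harmonic identities $\tfrac{\sigma^{2}}{2}W^{(q)\prime}(w)+\mu W^{(q)}(w) = Z^{(q)}(w)$ and $\tfrac{\sigma^{2}}{2}W^{(q)\prime\prime}(w)+\mu W^{(q)\prime}(w) = qW^{(q)}(w)$ produce
\begin{equation*}
\tfrac{\sigma^{2}}{2}g'(v)+\mu g(v)<q\bigl[BZ^{(q)}(w)+F(\br^{*}_{2k-2},v;\bu^{*}_{2k-3})W^{(q)}(w)\bigr].
\end{equation*}
Combining these two strict inequalities with \eqref{eqH1} gives $(\mathcal{L}-q)H(v;\bu^{*}_{2k-1})<0$.

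The remaining structural conclusions then follow from continuity. Assumption \eqref{h1n} supplies $v_{2k-1}>\br^{*}_{2k-1}$ with $(\mathcal{L}-q)H(v_{2k-1};\bu^{*}_{2k-1})>0$, while the previous paragraph gives strict negativity on $(\br^{*}_{2k-1},\br^{(1)}_{2k-1})$ and the vanishing at $\br^{*}_{2k-1}$. The intermediate value theorem then produces a sign change at some $\tilde v\in(\br^{(1)}_{2k-1},v_{2k-1})$ matching the pattern \eqref{ineq1.0.0n}, so $\mathcal{P}_{2k-1}\neq\emptyset$ and $\br^{(1)}_{2k-1}<\Cr_{2k-1}\le v_{2k-1}<\infty$, with the minimum attained by the smoothness of $g$ (and hence $H$). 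The main technical obstacle is the bookkeeping of the shifted scale functions underlying the telescoping computation of $h'(v)$; once that cancellation is identified, the rest reduces to the same harmonic algebra already used for Proposition \ref{n11}.
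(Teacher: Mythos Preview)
Your proof is correct and follows essentially the same route as the paper's own argument: the auxiliary function $h(v)$ you introduce is precisely $H(v;\bu^{*}_{2k-1})-\Xi(v;\bu^{*}_{2k-2})$ in the paper's notation, and both proofs establish $h>0$ on $(\br^{*}_{2k-1},\br^{(1)}_{2k-1})$ by comparing derivatives, then combine this with the bound on $\tfrac{\sigma^{2}}{2}g'+\mu g$ obtained from \eqref{W5n} via the harmonic identities \eqref{Wp1}--\eqref{Zp1}. Your telescoping computation of $h'(v)=-\partial_{z}F(\br^{*}_{2k-2},v;\bu^{*}_{2k-3})W^{(q)}(w)$ is slightly cleaner than the paper's version, and your treatment of the equality at $\br^{*}_{2k-1}$ is more explicit, but the underlying logic is identical.
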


A few straightforward changes in the proof of Lemma \ref{propm1} show that there exists $\hat{\varepsilon}_{2k-1}>0$ small enough, such that for $z\in(\Cr_{2k-1},\Cr_{2k-1}+\hat{\varepsilon}_{2k-1})$
\begin{equation*}
	\partial_{z}F(\Cr_{2k-1},z;\bu^{*}_{2k-1})>0,
\end{equation*}
and therefore $\mathcal{D}_{2k-1}\neq\emptyset$. We also have that $\br^*_{2k-1}<\br^*_{2k}$ from the next result.

\begin{proposition}\label{pmaxn}
For each $v\in[\br^{*}_{2k-1},\br^{(1)}_{2k-1})$,
	\begin{equation}\label{eq21n}
		F(v,v;\bu^{*}_{2k-1})>F(v,z;\bu^{*}_{2k-1})\quad \text{for}\ z>v.
	\end{equation}
	Hence $(\br^{*}_{2k-1},\br^{(1)}_{2k-1})\subset\mathcal{D}_{2k-1}^{c}$, with $\mathcal{D}_{2k-1}^{c}=\{v\in[\br^{*}_{2k-1},\Cr_{2k-1}]:v=z_{2k-1}(v)\}$.
\end{proposition}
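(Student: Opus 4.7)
The proof mirrors that of Proposition~\ref{pmax1}, with $(\br^*_1,\br^{(1)}_1)$ replaced by $(\br^*_{2k-1},\br^{(1)}_{2k-1})$ and with the barrier set $\bu^*_{2k-1}$ playing the role of $\{\br^*_1\}$. The plan is as follows.

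First I would record the two boundary facts about $z\mapsto F(v,z;\bu^*_{2k-1})$ that drive the argument. By \eqref{eq9p}, $F(v,v;\bu^*_{2k-1})=F(v,v+;\bu^*_{2k-1})=\tfrac{\sigma^2}{2}g(v)$, while by \eqref{eq11p} the initial slope equals
\[
\partial_z F(v,v+;\bu^*_{2k-1})=(\mathcal{L}-q)H(v;\bu^*_{2k-1}).
\]
Proposition~\ref{n11gen} then tells us that this slope vanishes at $v=\br^*_{2k-1}$ and is strictly negative on $(\br^*_{2k-1},\br^{(1)}_{2k-1})$, so for every $v\in(\br^*_{2k-1},\br^{(1)}_{2k-1})$ the map $z\mapsto F(v,z;\bu^*_{2k-1})$ is strictly decreasing at $v+$.

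Next, I would fix $v\in(\br^*_{2k-1},\br^{(1)}_{2k-1})$ and argue by contradiction. Suppose there exists $z^*>v$ with $F(v,z^*;\bu^*_{2k-1})\geq F(v,v;\bu^*_{2k-1})$. Continuity of $F(v,\cdot;\bu^*_{2k-1})$ combined with the strict decrease at $v+$ yields, via the intermediate value theorem, a smallest $z_0\in(v,z^*]$ at which $F(v,z_0;\bu^*_{2k-1})=F(v,v;\bu^*_{2k-1})$ and $\partial_z F(v,z_0;\bu^*_{2k-1})\geq 0$. Multiplying through by $W^{(q)\prime}(z_0-v)$, the pair $(v,z_0)$ therefore satisfies two explicit algebraic identities involving $g$, $W^{(q)}$, $W^{(q)\prime}$, and $H(v;\bu^*_{2k-1})$.

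Third, I would unwind one layer of the recursion. Using $H(v;\bu^*_{2k-1})=G(v)-G(\br^*_{2k-1})+\phi(\br^*_{2k-1};\bu^*_{2k-1})$ together with \eqref{phin}--\eqref{Hn} and the smooth-fit identity $F(\br^*_{2k-2},\br^*_{2k-2};\bu^*_{2k-3})=F(\br^*_{2k-2},\br^*_{2k-1};\bu^*_{2k-3})$ from the inductive hypothesis \eqref{b1n}, I would rewrite the two identities satisfied by $(v,z_0)$ as a statement about $y\mapsto F(\br^*_{2k-2},y;\bu^*_{2k-3})$. The resulting statement should contradict \eqref{ex2n}, namely that $F(\br^*_{2k-2},y;\bu^*_{2k-3})<F(\br^*_{2k-2},v';\bu^*_{2k-3})$ for all $v'\in(\br^*_{2k-1},\br^{(1)}_{2k-1})$ and $y>v'$, and hence contradicts the very construction of $\br^*_{2k-1}$ at the previous step of the algorithm. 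The endpoint case $v=\br^*_{2k-1}$ is then handled by passing to the limit $v\downarrow\br^*_{2k-1}$ of the strict inequality just established; the resulting non-strict inequality is upgraded to a strict one by a non-degeneracy argument exploiting that equality on an interval $(z_1,z_2)\subset(\br^*_{2k-1},\infty)$ would force the smooth maps $g$ and $W^{(q)}$ to satisfy a rigid identity inconsistent with the strict log-convexity of $W^{(q)\prime}$ recorded in Lemma~\ref{compmon}. Once the inequality is established, the inclusion $(\br^*_{2k-1},\br^{(1)}_{2k-1})\subset\mathcal{D}_{2k-1}^{c}$ follows immediately from the definition \eqref{ineq1.1n} of $z_{2k-1}(v)$, since no $y>v$ can attain the supremum of $F(v,\cdot;\bu^*_{2k-1})$.

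The hard part will be the third step, bridging the identities at $(v,z_0)$ (which live at level $\bu^*_{2k-1}$) with the inductive tools \eqref{ex2n} and \eqref{b1n} (which live at level $\bu^*_{2k-3}$). Carrying out this translation cleanly requires careful bookkeeping of how $H(v;\bu^*_{2k-1})$ and $\phi(\br^*_{2k-1};\bu^*_{2k-1})$ encode the previously selected barriers $\br^*_{2k-2}$, $\br^*_{2k-1}$, and in particular how the previous smooth-fit identity collapses the coefficient $F(\br^*_{2k-2},\br^*_{2k-1};\bu^*_{2k-3})$ to $\tfrac{\sigma^2}{2}g(\br^*_{2k-2})$ in the formula for $\phi(\br^*_{2k-1};\bu^*_{2k-1})$.
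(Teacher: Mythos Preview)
Your high–level framing (``mirror Proposition~\ref{pmax1}, replace $\br^*_1$ by $\br^*_{2k-1}$, feed in the inductive data \eqref{ex2n} and \eqref{b1n}'') is exactly right, and the first two steps you describe are fine. The problem is step three, which is the whole proof, and your sketch there is not the argument the paper (or Proposition~\ref{pmax1}) actually runs.

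Neither Proposition~\ref{pmax1} nor Proposition~\ref{pmaxn} is proved by locating a first crossing point $z_0$ and then ``unwinding the recursion'' to a contradiction with \eqref{ex2n}. The paper instead builds an \emph{explicit upper barrier} for $z\mapsto F(v,z;\bu^*_{2k-1})$: using \eqref{ex2n} it replaces $g(z)$ by $F(\br^*_{2k-2},v;\bu^*_{2k-3})W^{(q)\prime}(z-\br^*_{2k-2})+qH(\br^*_{2k-2};\bu^*_{2k-3})W^{(q)}(z-\br^*_{2k-2})$ and simultaneously replaces $F(\br^*_{2k-2},\br^*_{2k-1};\bu^*_{2k-3})$ inside $H(v;\bu^*_{2k-1})$ by the larger $F(\br^*_{2k-2},v;\bu^*_{2k-3})$, obtaining a function $\Lambda(v,z;\bu^*_{2k-1})$ with $\Lambda(v,v)=\tfrac{\sigma^2}{2}g(v)$ and $F(v,z;\bu^*_{2k-1})<\Lambda(v,z;\bu^*_{2k-1})$. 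The work is then to show $\partial_z\Lambda<0$, and this is where the Brownian identities \eqref{W2}--\eqref{W7} are used to collapse products of shifted scale functions; after that reduction the sign condition becomes a derivative inequality that follows directly from \eqref{W5n}.

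Your plan never names these identities, and your claim that the two algebraic relations at $(v,z_0)$ ``should contradict \eqref{ex2n}'' is unsupported: the translation from level $2k-1$ back to level $2k-3$ uses \emph{both} halves of \eqref{ex2n} (the bound on $g(z)$ and the monotonicity of $F(\br^*_{2k-2},\cdot;\bu^*_{2k-3})$ past $\br^*_{2k-1}$), and then requires the Brownian wronskian-type identities to close. Without those ingredients the contradiction you are hoping for does not materialize; the relation between $F(v,\cdot;\bu^*_{2k-1})$ and $F(\br^*_{2k-2},\cdot;\bu^*_{2k-3})$ is not pointwise monotone in any obvious way. So the gap is concrete: you need either to carry out the $\Lambda$-comparison argument with \eqref{W2}--\eqref{W7}, or to supply a genuinely different mechanism for the bridge back to level $2k-3$; the current sketch does neither. (The endpoint $v=\br^*_{2k-1}$ is handled in the paper simply by continuity of $F(\cdot,\cdot;\bu^*_{2k-1})$; your proposed strict-inequality upgrade via log-convexity is unnecessary.)
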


The final theorem follows from the above results and the appropriate modifications in the proof of Theorem \ref{prin1}, so we omit its proof.

\begin{theorem}\label{ver1}
Assume that there exists $\epsilon_{2k-1}>0$ such that
\begin{equation}\label{condFin}
	\partial_zF(v,v+;\bu^*_{2k-1})=(\mathcal{L}-q)H(v;\bu^*_{2k-1})<0,\quad\text{for }v\in(\br^{*}_{2k},\br^{*}_{2k}+\epsilon_{2k-1}).
\end{equation}
Then,	$\br^{*}_{2k}$ and $\br_{2k+1}$ given by the Algorithm are well defined and satisfy
	\begin{equation*}
		\br^{*}_{2k-1}<\br^{*}_{2k}<\br^{*}_{2k+1}\quad\text{and}\quad\br^*_{2k}<\Cr_{2k-1}.
	\end{equation*}
	Moreover, 
	\begin{equation}\label{l2.2}
		F(\br^{*}_{2k},\br^{*}_{2k};\bu^{*}_{2k-1})= F(\br^{*}_{2k},\br^{*}_{2k+1};\bu^{*}_{2k-1}).
	\end{equation}
\end{theorem}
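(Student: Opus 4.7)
Theorem \ref{ver1} is the inductive analogue of Theorem \ref{prin1}: the first three barriers are replaced by the previously constructed block $\bu_{2k-1}^*$, and the same sort of analysis that was used in the base case should go through. I would organize the argument around three goals: (a) nonemptyness of $\mathcal{D}_{2k-1}$ together with bounds $\br^*_{2k-1}<\br^*_{2k}\leq\Cr_{2k-1}$; (b) upgrading these to the strict inequalities $\br^*_{2k}<\br^*_{2k+1}$ and $\br^*_{2k}<\Cr_{2k-1}$ using hypothesis \eqref{condFin}; and (c) establishing the matching-value identity \eqref{l2.2}.

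For (a) I would invoke the two auxiliary results stated just before the theorem. The analogue of Lemma \ref{propm1} gives $\partial_z F(\Cr_{2k-1},z;\bu^*_{2k-1})>0$ on a right-neighborhood of $\Cr_{2k-1}$; combined with Assumption \ref{assumpg} (which via the expression \eqref{Fn1} forces $F(\Cr_{2k-1},z;\bu^*_{2k-1})\to-\infty$ as $z\to\infty$, so that a finite global maximizer exists), this shows $z_{2k-1}(\Cr_{2k-1})>\Cr_{2k-1}$ and hence $\Cr_{2k-1}\in\mathcal{D}_{2k-1}$. Proposition \ref{pmaxn} contributes the complementary information that $(\br_{2k-1}^*,\br^{(1)}_{2k-1})\subset\mathcal{D}_{2k-1}^c$, so that $\br_{2k}^*=\inf\mathcal{D}_{2k-1}$ obeys $\br_{2k-1}^*<\br^{(1)}_{2k-1}\leq\br_{2k}^*\leq\Cr_{2k-1}$.

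For (b) I would use condition \eqref{condFin}, which ensures that $\partial_z F(v,v+;\bu_{2k-1}^*)<0$ immediately to the right of $\br_{2k}^*$. This says that $z\mapsto F(v,z;\bu_{2k-1}^*)$ initially decreases at $z=v$ for $v$ just above $\br_{2k}^*$, and together with the defining property of $\br_{2k}^*$ as the infimum of $\mathcal{D}_{2k-1}$ it rules out the degenerate scenario in which $z_{2k-1}(v)\equiv v$ persists on a right-neighborhood of $\br_{2k}^*$. From this one can deduce that $\br_{2k}^*$ itself belongs to $\mathcal{D}_{2k-1}$, so $\br^*_{2k+1}=z_{2k-1}(\br^*_{2k})>\br^*_{2k}$ is well defined and finite. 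The strict inequality $\br^*_{2k}<\Cr_{2k-1}$ then follows: if equality held, a left-neighborhood of $\Cr_{2k-1}$ would have to lie in $\mathcal{D}_{2k-1}^c$ while $\Cr_{2k-1}\in\mathcal{D}_{2k-1}$, contradicting the structure of an infimum. For (c), I would pass to the limit at $v=\br^*_{2k}$ from the left: for $v_n\uparrow\br_{2k}^*$ with $v_n\in\mathcal{D}_{2k-1}^c$, the definition of $z_{2k-1}$ gives $F(v_n,v_n;\bu^*_{2k-1})\geq F(v_n,\br^*_{2k+1};\bu^*_{2k-1})$, and continuity of $F(\cdot,\cdot;\bu^*_{2k-1})$ yields $F(\br^*_{2k},\br^*_{2k};\bu^*_{2k-1})\geq F(\br^*_{2k},\br^*_{2k+1};\bu^*_{2k-1})$; the reverse inequality is immediate from $\br^*_{2k}\in\mathcal{D}_{2k-1}$, giving \eqref{l2.2}.

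The main obstacle is precisely step (b): showing that the infimum $\br^*_{2k}$ is \emph{attained} in $\mathcal{D}_{2k-1}$, since $v\mapsto z_{2k-1}(v)$ is generally only upper-semicontinuous and is expected to jump discontinuously at $\br^*_{2k}$. Condition \eqref{condFin} is exactly what forbids $\br^*_{2k}$ from being merely an accumulation point of $\mathcal{D}_{2k-1}$ from the right: without it, one could in principle have $F(v,\cdot;\bu^*_{2k-1})$ maximized at its left endpoint in a full neighborhood of $\br^*_{2k}$, in which case $\br^*_{2k}$ would not lie in $\mathcal{D}_{2k-1}$ and the subsequent barrier $\br^*_{2k+1}$ could not be defined. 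All other pieces of the proof are straightforward consequences of the continuity of $F$ and the preparatory propositions, mirroring the arguments already carried out for Theorem \ref{prin1}.
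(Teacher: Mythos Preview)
Your overall plan is the right one, and the paper itself simply says the proof is obtained by ``appropriate modifications in the proof of Theorem \ref{prin1}''. Parts (a) and (c) of your outline are sound; in particular your limit argument for \eqref{l2.2} (approaching $\br^*_{2k}$ from the left through $\mathcal{D}_{2k-1}^c$) is a clean alternative to the paper's route, which instead shows that $z\mapsto F(\br^*_{2k},z;\bu^*_{2k-1})$ is locally non-increasing at $\br^*_{2k}$ via a Lemma~\ref{propm1}--type computation.

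There are, however, two genuine gaps in your step (b). First, your argument for $\br^*_{2k}<\Cr_{2k-1}$ does not work: having a left-neighborhood of $\Cr_{2k-1}$ in $\mathcal{D}_{2k-1}^c$ while $\Cr_{2k-1}\in\mathcal{D}_{2k-1}$ is perfectly compatible with $\Cr_{2k-1}$ being the (attained) infimum. The correct reason, exactly as in the proof of Theorem~\ref{prin1}, is a direct sign clash: by the defining property \eqref{ineq1.0.0n} of $\Cr_{2k-1}$ one has $(\mathcal{L}-q)H(v;\bu^*_{2k-1})>0$ for $v$ just above $\Cr_{2k-1}$, whereas \eqref{condFin} forces $(\mathcal{L}-q)H(v;\bu^*_{2k-1})<0$ for $v$ just above $\br^*_{2k}$, so equality is impossible.

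Second, your justification that $\br^*_{2k}\in\mathcal{D}_{2k-1}$ is too sketchy. Knowing that $F(v,\cdot\,;\bu^*_{2k-1})$ initially decreases at $v$ for $v$ near $\br^*_{2k}$ does not by itself prevent the global maximum from sitting at the left endpoint; the difficulty is that the strict inequality $F(\br^*_{2k},z;\bu^*_{2k-1})<F(\br^*_{2k},\br^*_{2k};\bu^*_{2k-1})$ need not persist under a perturbation of $v$ (there is no uniform gap as $z\to\br^*_{2k}$). The paper's proof of Theorem~\ref{prin1} handles this by rewriting the inequality as $\bar{\Lambda}(v,z)>F(z)$ for an auxiliary function $\bar{\Lambda}$, and then using \eqref{condFin} to show $\partial_z\bar{\Lambda}(v,v+)>F'(v)$ on a right-neighborhood of $\br^*_{2k}$; this propagates the strict inequality to nearby $v$, forcing a whole right-neighborhood into $\mathcal{D}_{2k-1}^c$ and contradicting $\br^*_{2k}=\inf\mathcal{D}_{2k-1}$. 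You should either replicate this $\bar{\Lambda}$-argument in the general step or give an equally concrete mechanism.
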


Again, by Remark \ref{remcond}, condition \eqref{condFin} is satisfied if $(\mathcal{L}-q)g\neq0$ a.e. Note that if the function $x\mapsto(\mathcal{L}-q)H(x;\bu^{*}_{2k+1})$ is non-increasing for $x\geq\br^{*}_{2k+1}$, or equivalently, if $(\mathcal{L}-q)g(x)\leq0$ for $x\geq\br^{*}_{2k+1}$, then condition \eqref{ineq2} holds. Another sufficient condition is given in the next proposition, which is analogous to Theorem \ref{Ta1}.

\begin{proposition}\label{decFF}
Assume that the function $z\mapsto F(\br^{*}_{2k},z;\bu^{*}_{2k-1})$ is non-increasing for $z\geq\br^{*}_{2k+1}$, then the function $x\mapsto (\L-q)H(x;\bu^{*}_{2k+1})\leq0$ for all $x\geq\br^{*}_{2k+1}$.
\end{proposition}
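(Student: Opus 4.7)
The plan is to imitate the strategy of Theorem~\ref{Ta1} combined with Remark~\ref{der_f_re}(iii), adapted to the multibarrier setting. Because $H(x;\bu^{*}_{2k+1})=G(x)-G(\br^{*}_{2k+1})+\phi(\br^{*}_{2k+1};\bu^{*}_{2k+1})$ for $x\geq\br^{*}_{2k+1}$, one has $H'(x;\bu^{*}_{2k+1})=g(x)$, $H''(x;\bu^{*}_{2k+1})=g'(x)$, and therefore
\[
\frac{d}{dx}(\mathcal{L}-q)H(x;\bu^{*}_{2k+1})=(\mathcal{L}-q)g(x)=:\Psi'(x),\qquad x>\br^{*}_{2k+1}.
\]
The task thus reduces to verifying: (i) the boundary identity $(\mathcal{L}-q)H(\br^{*}_{2k+1};\bu^{*}_{2k+1})=0$, and (ii) $\int_{\br^{*}_{2k+1}}^{x}\Psi'(z)\,\diff z\leq 0$ for every $x\geq\br^{*}_{2k+1}$.

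For (i), I will exploit that $\br^{*}_{2k+1}=z_{2k-1}(\br^{*}_{2k})$ is an interior maximizer of $z\mapsto F(\br^{*}_{2k},z;\bu^{*}_{2k-1})$, so the first-order condition gives $\partial_{z}F(\br^{*}_{2k},\br^{*}_{2k+1};\bu^{*}_{2k-1})=0$; equivalently, $\overline{F}(\br^{*}_{2k},\br^{*}_{2k+1};\bu^{*}_{2k-1})=0$ via \eqref{dnp}--\eqref{dnp2}. Feeding this into the explicit formula for $(\mathcal{L}-q)H(\br^{*}_{2k+1};\bu^{*}_{2k+1})$ and substituting the harmonic identities $\frac{\sigma^{2}}{2}W^{(q)\prime\prime}(y)+\mu W^{(q)\prime}(y)=qW^{(q)}(y)$ and $\frac{\sigma^{2}}{2}W^{(q)\prime}(y)+\mu W^{(q)}(y)=Z^{(q)}(y)$ (both consequences of $(\mathcal{L}-q)W^{(q)}=0$ for Brownian motion), all $qH(\br^{*}_{2k};\bu^{*}_{2k-1})Z^{(q)}$ and $qFW^{(q)}$ contributions cancel, leaving $\mu$ times $g(\br^{*}_{2k+1})-qH(\br^{*}_{2k};\bu^{*}_{2k-1})W^{(q)}(y_{0})-F(\br^{*}_{2k},\br^{*}_{2k+1};\bu^{*}_{2k-1})W^{(q)\prime}(y_{0})$, with $y_{0}=\br^{*}_{2k+1}-\br^{*}_{2k}$, which vanishes by the very definition \eqref{Fn1} of $F$.

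For (ii), the core is an identity linking $\Psi'$ to $F(\br^{*}_{2k},\cdot;\bu^{*}_{2k-1})$. Setting
\[
Q(z):=g(z)-qH(\br^{*}_{2k};\bu^{*}_{2k-1})W^{(q)}(z-\br^{*}_{2k})=F(\br^{*}_{2k},z;\bu^{*}_{2k-1})\,W^{(q)\prime}(z-\br^{*}_{2k}),
\]
the harmonicity of $W^{(q)}$ gives $(\mathcal{L}-q)Q(z)=\Psi'(z)$; expanding $(\mathcal{L}-q)(FW^{(q)\prime})$ via the Brownian-motion product rule $(\mathcal{L}-q)(fh)=h\mathcal{L}f+\sigma^{2}f'h'$ (valid whenever $\mathcal{L}h=qh$), applied with $h=W^{(q)\prime}(\cdot-\br^{*}_{2k})$, yields
\[
\Psi'(z)=\frac{\sigma^{2}}{2}W^{(q)\prime}(y)\,\partial_{z}^{2}F+\partial_{z}F\,\bigl[\sigma^{2}W^{(q)\prime\prime}(y)+\mu W^{(q)\prime}(y)\bigr],\qquad y=z-\br^{*}_{2k}.
\]
An integration by parts on the first summand, killing the boundary contribution via $\partial_{z}F(\br^{*}_{2k},\br^{*}_{2k+1};\bu^{*}_{2k-1})=0$ and collapsing the remainder via $\frac{\sigma^{2}}{2}W^{(q)\prime\prime}+\mu W^{(q)\prime}=qW^{(q)}$, produces
\[
\int_{\br^{*}_{2k+1}}^{x}\Psi'(z)\,\diff z=\frac{\sigma^{2}}{2}W^{(q)\prime}(x-\br^{*}_{2k})\,\partial_{z}F(\br^{*}_{2k},x;\bu^{*}_{2k-1})+q\int_{\br^{*}_{2k+1}}^{x}\partial_{z}F(\br^{*}_{2k},z;\bu^{*}_{2k-1})\,W^{(q)}(z-\br^{*}_{2k})\,\diff z.
\]

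Since the hypothesis provides $\partial_{z}F(\br^{*}_{2k},\cdot;\bu^{*}_{2k-1})\leq 0$ on $[\br^{*}_{2k+1},\infty)$ and $W^{(q)},W^{(q)\prime}>0$ on $(0,\infty)$, both summands on the right are non-positive and the conclusion follows. I expect the main obstacle to be isolating the clean product-type formula for $\Psi'$ through the auxiliary function $Q$ and then tracking signs through the integration by parts; the smooth-fit relation $\partial_{z}F(\br^{*}_{2k},\br^{*}_{2k+1};\bu^{*}_{2k-1})=0$, inherited from the construction of $\br^{*}_{2k+1}$ in \eqref{ineq1.1n}, is the single structural ingredient that simultaneously secures the boundary identity in (i) and the vanishing of the boundary term in (ii).
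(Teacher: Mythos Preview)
Your argument is correct and takes a genuinely different route from the paper. The paper proceeds by a comparison argument in the spirit of Theorem~\ref{Ta1}: for each $x>\br^{*}_{2k+1}$ it introduces the auxiliary multibarrier $\bu^{x}$ obtained from $\bu^{*}_{2k+1}$ by replacing the last barrier $\br^{*}_{2k+1}$ with $x$, checks (using the hypothesis and the definition of $\br^{*}_{2k+1}$) that $V_{\bu^{x}}$ is dominated by $V_{\bu^{*}_{2k+1}}$ together with matching of first derivatives and the right inequality on second derivatives at $x$, and then concludes $(\mathcal{L}-q)V_{\bu^{*}_{2k+1}}(x)\leq (\mathcal{L}-q)V_{\bu^{x}}(x-)=0$. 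Your approach is a direct analytic computation: you reduce the problem to showing $\int_{\br^{*}_{2k+1}}^{x}(\mathcal{L}-q)g(z)\,\diff z\leq 0$, rewrite $(\mathcal{L}-q)g$ as $(\mathcal{L}-q)\bigl(F(\br^{*}_{2k},\cdot;\bu^{*}_{2k-1})W^{(q)\prime}(\cdot-\br^{*}_{2k})\bigr)$ via harmonicity of $W^{(q)}$, expand using the Brownian product rule, and integrate by parts. The smooth-fit relation $\partial_{z}F(\br^{*}_{2k},\br^{*}_{2k+1};\bu^{*}_{2k-1})=0$ kills the boundary term (and also gives the boundary identity $(\mathcal{L}-q)H(\br^{*}_{2k+1};\bu^{*}_{2k+1})=0$, which the paper records separately in Proposition~\ref{n11gen}), and the identity \eqref{Wp1} collapses the remaining integrand to $q\,\partial_{z}F\cdot W^{(q)}$, whose sign is controlled by the hypothesis. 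Your route is more elementary and self-contained --- it avoids introducing an auxiliary strategy and makes the role of the monotonicity assumption completely transparent --- whereas the paper's approach has the virtue of being structurally parallel to the one-barrier verification in Theorem~\ref{Ta1}.
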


\subsection{Verification of optimality}

We now verify that the sequence of barriers $\bu^*$ given by Algorithm  \ref{alg1} is indeed optimal. For this, note that the set of barriers $\bu^{*}$ satisfies the following properties:
\begin{itemize}
	\item[(i)] Using \eqref{eq9p} together with \eqref{b2} and \eqref{l2.2} we have for $k=1,\dots,n$
	\begin{align*}
	\frac{\sigma^2}{2}g(\br_{2k}^*)&=F(\br_{2k}^*,\br_{2k}^{*}+;\bu^{*}_{2k-1})=F(\br_{2k}^*,\br_{2k+1}^*;\bu^{*}_{2k-1})\notag\\&=\frac{g(\br_{2k+1}^*)-qH(\br_{2k}^*;\bu_{2k-1})W^{(q)}(\br_{2k+1}^*-\br_{2k}^*)}{W^{(q)\prime}(\br_{2k+1}^*-\br_{2k}^*)}.
	\end{align*}
	Therefore,
	\begin{align*}
	V_{\bu^{*}}'(\br^{*}_{2k}+)&=\frac{2}{\sigma^2}\left(\frac{g(\br_{2k+1}^*)-qH(\br^{*}_{2k};\bu_{2k-1})W^{(q)}(\br_{2k+1}^*-\br^{*}_{2k})}{W^{(q)\prime}(\br_{2k+1}^*-\br^{*}_{2k})}\right)=g(\br^{*}_{2k})=V_{\bu^{*}}'(\br^{*}_{2k}-),
	\end{align*}
	and hence $V_{\bu^{*}}\in C^1((0,\infty))$.
	\item[(ii)] Additionally, by the step \eqref{algo3} 
	\begin{align*}
	\partial_{z}F(\br^{*}_{2k},z;\bu^{*}_{2k-1})\Big|_{z=\br^{*}_{2k+1}}=0.
	\end{align*}
	This implies that 
	\begin{align*}
	g'(\br^{*}_{2k+1})=qH(\br^{*}_{2k},\bu^{*}_{2k-1})W^{(q)\prime}(\br^{*}_{2k+1}-\br^{*}_{2k})+W^{(q)\prime\prime}(\br_{2k+1}-\br^{*}_{2k})F(\br^{*}_{2k},\br^*_{2k+1};\bu^{*}_{2k-1}),
	\end{align*}
	and therefore
	\begin{align*}
	V_{\bu^{*}}''(\br^{*}_{2k+1}+)&=qH(\br^{*}_{2k},\bu^{*}_{2k-1})W^{(q)\prime}(\br^{*}_{2k+1}-\br^{*}_{2k})\notag\\
	&\quad+W^{(q)\prime\prime}(\br^{*}_{2k+1}-\br^{*}_{2k})\bigg(\frac{g(\br^{*}_{2k+1})-qH(\br^{*}_{2k},\bu^{*}_{2k-1})W^{(q)}(\br^{*}_{2k+1}-\br^{*}_{2k})}{W^{(q)\prime}(\br^{*}_{2k+1}-\br^{*}_{2k})}\bigg)\notag\\
	&=g'(\br^{*}_{2k+1})=V_{\bu^{*}}''(\br^{*}_{2k+1}-).
	\end{align*}
	Hence, $V_{\bu^{*}}\in C^2(\R\backslash\{\br_{2k}\}_{k=1}^n)$.
\end{itemize}

Also, the value function $V_{\bu^{*}}$ satisfies that:
\begin{enumerate}
	\item[(i)] As in the proof of Theorem \ref{der_f_re},  $V_{\bu^{*}}$ satisfies the HJB equation \eqref{hjb1} on $(0,\br^{*}_{1})$.
	\item[(ii)] By \eqref{Hn}, $V'_{\bu^{*}}(x)=g(x)$ for $x\in\bigcup_{k=2}^{n}[\br_{2k-1}^*,\br_{2k}^*]\cup[\br^{*}_{2n+1},\infty)$.
	\item[(iii)] By the martingale properties of the scale functions \eqref{harmonic_sf} we also have that $(\mathcal{L}-q)V_{\bu^{*}}(x)=0$ for $x\in\bigcup_{k=1}^{n}(\br_{2k}^*,\br_{2k+1}^*)$.
	\item[(iv)] Since $F(\br_{2k}^*,x;\bu^{*}_{2k-1})\leq F(\br_{2k}^*,\br_{2k+1}^*;\bu^{*}_{2k-1})$ for $x\in(\br_{2k}^*,\br_{2k+1}^*)$ and $k\in\{1,\dots,n\}$, then
	\begin{align*}
	V'_{\bu^{*}}(x)&=qH(\br_{2k}^*;\bu_{2k-1})W^{(q)}(x-\br_{2k}^*)+F(\br_{2k}^{*},\br_{2k+1}^{*};\bu_{2k-1})W^{(q)\prime}(x-\br_{2k}^*)\\
	&\geq qH(\br_{2k}^*;\bu_{2k-1})W^{(q)}(x-\br_{2k}^*)+F(\br_{2k}^{*},x;\bu_{2k-1})W^{(q)\prime}(x-\br_{2k}^*)\\
	&=g(x).
	\end{align*}
	\item[(v)] By construction,  Algorithm \ref{alg1} guarantees that for both $x\in[\br_{2k+1}^*,\br_{2k+2}^*]$, with $k\in\{0,\dots,n-1\}$, and $x\in[\br_{2n+1}^*,\infty)$, we have that
	$$(\mathcal{L}-q)V_{\bu^{*}}(x)=(\mathcal{L}-q)H(x;\bu^{*}_{2k-1})\leq 0.$$
\end{enumerate}
We conclude that $V_{\bu}^{*}$ satisfies the HJB equation \eqref{hjb1}. Now,  by Lemma \ref{lv1}, we obtain the following theorem.

\begin{theorem}
	Let $V$ be the value function given in \eqref{f2}. Then $V_{\bu^{*}}(x)=V(x)$ for all $x\geq 0$ and the $\bu^{*}$-strategy is optimal.
\end{theorem}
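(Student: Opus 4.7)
The plan is to apply the Verification Lemma (Lemma \ref{lv1}) directly to $V_{\bu^*}$, using precisely the ingredients assembled in items (i)--(v) immediately preceding the theorem statement. First I would confirm the regularity hypothesis of Lemma \ref{lv1}: from the smooth-fit computations in (i) and (ii) just above the theorem, I have $V_{\bu^*}\in C^1((0,\infty))\cap C^2((0,\infty)\setminus\{\br_{2k}^*\}_{k=1}^n)$, which is enough regularity for the Brownian case (unbounded variation with $\Pi\equiv 0$), since the generator $\mathcal{L}$ only requires $C^2$ away from a locally finite set where $V_{\bu^*}'$ is continuous. Right-continuity at $0$ is immediate from the explicit representation in Proposition \ref{c5}.

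Next I would verify the HJB inequality \eqref{hjb1} region-by-region, using the partition of $[0,\infty)$ induced by $\bu^*$:
\begin{enumerate}
\item On $(0,\br_1^*)$: since $V_{\bu^*}(x)=(g(\br_1^*)/W^{(q)\prime}(\br_1^*))W^{(q)}(x)$, the harmonicity of $W^{(q)}$ in \eqref{harmonic_sf} gives $(\mathcal{L}-q)V_{\bu^*}=0$, while the inequality $g(x)-V_{\bu^*}'(x)\leq 0$ follows from the definition of $\br_1^*$ in \eqref{lower_barrier} exactly as in the proof of Theorem \ref{Ta1}.
\item On each continuation interval $(\br_{2k}^*,\br_{2k+1}^*)$, $k=1,\dots,n$: $V_{\bu^*}$ is a linear combination of $W^{(q)}$ and $Z^{(q)}$, so $(\mathcal{L}-q)V_{\bu^*}=0$ by \eqref{harmonic_sf}; the inequality $g\leq V_{\bu^*}'$ is item (iv), which uses the maximality of $F(\br_{2k}^*,\cdot;\bu_{2k-1}^*)$ at $\br_{2k+1}^*$ from step \eqref{algo4} of Algorithm \ref{alg1}.
\item On each action interval $[\br_{2k+1}^*,\br_{2k+2}^*]$ (with $\br_{2n+2}^*=\infty$): by \eqref{Hn}, $V_{\bu^*}'=g$, so the second term in the $\max$ vanishes; the inequality $(\mathcal{L}-q)V_{\bu^*}\leq 0$ is item (v), which is guaranteed either by the stopping criterion \eqref{ineq2} in Algorithm \ref{alg1} or by Proposition \ref{decFF}.
\end{enumerate}
Combining these gives $\max\{(\mathcal{L}-q)V_{\bu^*}(x),g(x)-V_{\bu^*}'(x)\}\leq 0$ for all $x>0$, so Lemma \ref{lv1} yields $V_{\bu^*}=V$ and the optimality of $\pi_{\bu^*}$.

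The only point that is not completely routine is ensuring that the verification lemma, which is stated for ``sufficiently smooth'' $V_{\hat\pi}$, actually applies here despite $V_{\bu^*}$ failing to be $C^2$ at the reflection levels $\{\br_{2k}^*\}_{k=1}^n$. I would address this either by noting that the standard Itô--Tanaka / Meyer--Itô formula for convex combinations of $C^2$ pieces (with continuous first derivative, as established in item (i)) yields the same martingale argument as in the proof of Lemma \ref{lv1}, or by approximating $V_{\bu^*}$ by a sequence of $C^2$ functions agreeing with it outside shrinking neighborhoods of the $\br_{2k}^*$'s and passing to the limit; the second-derivative jumps at these points are harmless because the local time of Brownian motion at a point does not accumulate mass in the integrand of the Itô formula after integration against $\mathrm{d}t$. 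This gluing step is the main technical obstacle, but it is standard for singular control problems with reflection and is already implicit in the formulation of Lemma \ref{lv1}.
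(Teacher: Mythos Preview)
Your proposal is correct and follows essentially the same approach as the paper: the paper's proof consists precisely of the smoothness checks and items (i)--(v) listed immediately before the theorem, followed by the one-line sentence ``We conclude that $V_{\bu^{*}}$ satisfies the HJB equation \eqref{hjb1}. Now, by Lemma \ref{lv1}, we obtain the following theorem.'' You reproduce that structure faithfully. Your explicit acknowledgement that $V_{\bu^{*}}$ is only $C^{1}$ (not $C^{2}$) at the reflecting levels $\{\br_{2k}^{*}\}_{k=1}^{n}$, together with the Meyer--It\^o / mollification remedy you outline, is a point the paper leaves implicit when it invokes Lemma \ref{lv1}; raising and addressing it is a genuine improvement in rigor.
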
 

\section*{Acknowledgments}
The authors would like to thank the anonymous reviewers for their comments and suggestions, which helped to improve significantly the quality of this paper.
\section*{Funding}
M. Junca was supported by the Research Fund of the Facultad de Ciencias, Universidad de los Andes INV-2021-128-2307. The authors have no relevant financial or non-financial interests to disclose.

\bibliography{ref2} 
\bibliographystyle{plain}

\appendix

\section{Properties of scale functions of Brownian motion with drift}\label{appendixlemmas}
Considering $X_{t}=\mu t+\sigma B_{t}$, with $\mu\in\R$, $\sigma\geq0$ and $B_{t}$ as a Brownian motion, the $q$-scale functions $W^{(q)}$ and $Z^{(q)}$ are given by the expressions seen in \eqref{sf}. Then, using \eqref{harmonic_sf} we see that $W^{(q)} $ and $Z^{(q)}$ satisfy the following identities
\begin{equation}\label{Wp1}
	\dfrac{\sigma^2}{2}W^{(q)\prime\prime}(v)+\mu W^{(q)\prime}(v)=qW^{(q)}(v)\quad \text{for}\ v\geq0,
\end{equation}
\begin{equation}\label{Zp1}
	\dfrac{\sigma^2}{2}qW^{(q)\prime}(v)+\mu qW^{(q)}(v)=qZ^{(q)}(v)\quad \text{for}\ v\geq0.
\end{equation}

\begin{lemma}
	For $0\leq b< v<z$, the following hold
\begin{align}
&[W^{(q)\prime}(v)]^{2}-W^{(q)}(v)W^{(q)\prime\prime}(v)=\dfrac{4}{\sigma^{4}} \expo^{(\Phi(q)-\xi_{1})v},\label{W2}\\
&W^{(q)\prime\prime}(z-b)W^{(q)\prime}(z-v)-W^{(q)\prime}(z-b)W^{(q)\prime\prime}(z-v)=\dfrac{4q}{\sigma^{4}}\expo^{(\Phi(q)-\xi_{1})(z-v)}W^{(q)}(v-b),\label{W1}\\
&W^{(q)\prime}(z-b)W^{(q)\prime}(z-v)-W^{(q)\prime\prime}(z-v)W^{(q)}(z-b)=\frac{4\expo^{(\Phi(q)-\xi_{1})(z-v)}}{\sigma^{4}}Z^{(q)}(v-b).\label{W7}
\end{align}
\end{lemma}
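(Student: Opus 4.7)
All three identities are purely algebraic facts about the $q$-scale functions for Brownian motion with drift, whose explicit representations are given in \eqref{sf}. My plan is direct verification using those formulas, combined with a preliminary reduction step that converts the second-derivative identities into first-derivative Wronskians.

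First, I would record the three algebraic relations between the constants that will drive every simplification:
\begin{equation*}
\Phi(q)+\zeta_1 = \frac{2\sqrt{\mu^2+2q\sigma^2}}{\sigma^2},\qquad \Phi(q)\zeta_1 = \frac{2q}{\sigma^2},\qquad \zeta_1-\Phi(q) = \frac{2\mu}{\sigma^2},
\end{equation*}
all immediate from the definitions below \eqref{sf}. I would also write $A\eqdef\sqrt{\mu^2+2q\sigma^2}$ (so that $W^{(q)}(x) = A^{-1}(\expo^{\Phi(q)x}-\expo^{-\zeta_1 x})$) and differentiate \eqref{sf} once and twice to get $W^{(q)\prime}$ and $W^{(q)\prime\prime}$ as simple linear combinations of $\expo^{\Phi(q)x}$ and $\expo^{-\zeta_1 x}$.

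For \eqref{W2}, I would expand $[W^{(q)\prime}(v)]^2 - W^{(q)}(v)W^{(q)\prime\prime}(v)$ directly. The ``pure'' exponentials $\expo^{2\Phi(q)v}$ and $\expo^{-2\zeta_1 v}$ cancel, leaving only cross terms whose coefficients collapse to $(\Phi(q)+\zeta_1)^2/A^2 = 4/\sigma^4$.

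For \eqref{W1}, rather than grinding through a six-variable expansion, the cleaner route is to use the scale-function ODE \eqref{Wp1}: substituting $W^{(q)\prime\prime}(\cdot) = \frac{2}{\sigma^2}(qW^{(q)}(\cdot) - \mu W^{(q)\prime}(\cdot))$ into both second-derivative factors on the left of \eqref{W1}, the $\mu$-terms cancel and the identity reduces to the Wronskian
\begin{equation*}
W^{(q)}(z-b)W^{(q)\prime}(z-v) - W^{(q)\prime}(z-b)W^{(q)}(z-v) = \frac{2}{\sigma^2}\expo^{(\Phi(q)-\zeta_1)(z-v)}W^{(q)}(v-b).
\end{equation*}
This auxiliary identity is then verified by direct substitution of the explicit formulas: after expanding, the pure exponentials cancel and the surviving cross terms share a common coefficient $(\Phi(q)+\zeta_1)/A = 2/\sigma^2$; factoring $\expo^{(\Phi(q)-\zeta_1)(z-v)}$ leaves exactly $A W^{(q)}(v-b)$, giving the claimed right-hand side. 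The factor $2q/\sigma^2$ produced by the ODE substitution then matches the $4q/\sigma^4$ on the right of \eqref{W1}.

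For \eqref{W7}, I would proceed analogously: substitute the ODE for $W^{(q)\prime\prime}(z-v)$, expand, and after the pure-exponential cancellations factor $\expo^{(\Phi(q)-\zeta_1)(z-v)}$ to isolate a combination of the form $\zeta_1\expo^{\Phi(q)(v-b)} + \Phi(q)\expo^{-\zeta_1(v-b)}$. Using $\Phi(q)\zeta_1 = 2q/\sigma^2$, the explicit form of $Z^{(q)}$ in \eqref{sf} can be rewritten as $Z^{(q)}(x) = \frac{\sigma^2}{2A}[\zeta_1\expo^{\Phi(q)x}+\Phi(q)\expo^{-\zeta_1 x}]$, so the combination is exactly $\frac{2A}{\sigma^2}Z^{(q)}(v-b)$. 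Combining the constants via $\Phi(q)+\zeta_1=2A/\sigma^2$ produces the coefficient $4/\sigma^4$ on the right of \eqref{W7}.

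The main obstacle is purely bookkeeping: each left-hand side, when expanded, is a sum of four exponential products in two variables, and one must track four kinds of exponents $(\pm\Phi(q),\pm\zeta_1)\cdot(z-b,z-v)$. The ODE reduction \eqref{Wp1} is what keeps this manageable for \eqref{W1} and \eqref{W7}; without it, the computation works but is noticeably more tedious. No deeper input is needed beyond the explicit formulas \eqref{sf} and the three constant identities listed above.
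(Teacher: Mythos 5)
Your proposal is correct: all three identities do follow by direct expansion from the explicit formulas \eqref{sf}, your three constant relations are exactly right, the Wronskian reduction of \eqref{W1} via \eqref{Wp1} works as you describe (the $\mu$-terms cancel, leaving $\tfrac{2q}{\sigma^2}$ times the first-order Wronskian), and your rewriting $Z^{(q)}(x)=\tfrac{\sigma^2}{2A}[\zeta_1\expo^{\Phi(q)x}+\Phi(q)\expo^{-\zeta_1 x}]$ is precisely what makes \eqref{W7} close up. The paper gives no proof of this lemma at all (it is stated as a computational fact, with $\xi_1$ an evident typo for $\zeta_1$), so your direct verification is exactly the argument the authors leave to the reader; the only minor remark is that for \eqref{W7} the ODE substitution does not cancel the $\mu$-terms the way it does for \eqref{W1}, so there the plain four-term expansion is the cleaner route, and it yields the combination $\zeta_1\expo^{\Phi(q)(v-b)}+\Phi(q)\expo^{-\zeta_1(v-b)}$ you identified.
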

The identities \eqref{W2}--\eqref{W7} will be used to verify the results given in Propositions \ref{pmax1} and \ref{pmaxn}.

\section{Proofs of one-barrier strategies}\label{appendixbarrier}

\begin{proof}[Proof of Proposition \ref{aux2}]

Observe that if $X_{0}=x>\br$,   $L^{\br}_{0}=x-\br$. Then, by the strong Markov property, we get that
\begin{align}\label{fluc_rp_2}
	V_{\br}(x)&=(x-\br)\int_0^1g(x-\lambda(x-\br))d\lambda+\E_{\br}\left[\int_0^{\tau_{\br}}\expo^{-qt}g(X^{\br}_{t})\circ \ud L^{\br}_t\right]\\
	&=(x-\br)\int_0^1g(x-\lambda(x-\br))d\lambda+V_{\br}(\br)\notag\\
	&=\int_{\br}^xg(u)du+V_{\br}(\br)=G(x)-G(\br)+V_{\br}(\br).\notag
\end{align}
Let us consider $x\in[0,\br]$, then by \eqref{l1} and the strong Markov property,
 \begin{align}\label{fluc_rp_1}
 V_{\br}(x)=\E_x\left[\expo^{-q\tau_{\br}^{+}}\E_{\br}\left[\int_0^{\tau_{\br}}\expo^{-qt}g(X^{\br}_{t})\circ\ud L^{\br}_t\right];\tau_{\br}^+<\tau_0^-\right]=V_{\br}(\br)\frac{W^{(q)}(x)}{W^{(q)}(\br)}.
 \end{align}
We now compute $V_{\br}(\br)$. Let us denote $S_t=\displaystyle\sup_{0\leq s\leq t}(\widetilde{X}_s\vee 0)$ and $Y_t=S_t-\widetilde{X}_{t}$ for $t\geq0$, where the process $\widetilde{X}$ has the same law than $X-\br$. We also consider the first entrance of the process $Y$ to the set $(a,\infty)$ given by $\sigma_a\eqdef \inf\{t\geq 0: Y_t>a\}$, $a\in\R$. By spatial homogeneity of the L\'evy process $X$ and following Proposition 1 in \cite{AvPaPi07} we have that the ensemble $\{X^{\br}_{t},L_{t}^{\br}, t\leq \tau_{\br};X^{\br}_0=x\}$ has the same law as $\{\br-Y_t,S_{t}, t\leq \sigma_{\br};Y_0=\br-x\}$. Therefore
 \begin{align}\label{fluc_1}
V_{\br}(\br)&=\E_{\br}\left[\int_0^{\tau_{\br}}\expo^{-qt}g(X^{\br}_{t})\circ\ud L^{\br}_t\right]=\E_{0}\left[\int_0^{\sigma_{\br}}\expo^{-qt}g(\br-Y_t)\circ\ud S_t\right].
 \end{align}
By the absence of positive jumps of $X$, it is known that the supremum $S$ is a local time for $0$ for the process $Y$. Now, let us denote by $\mathcal{E}$ the set of excursions away from zero of finite length
\begin{equation*}
\mathcal{E}\eqdef \{\epsilon\in \mathcal{C}: \exists \ \zeta=\zeta(\epsilon)>0 \ \text{such that} \ \epsilon_{\zeta}=0 \ \text{and} \ \epsilon_{x}>0 \ \text{for} \ 0<x<\zeta\},
\end{equation*}
where $\mathcal{C}\eqdef \mathcal{C}([0,\infty))$ is the set of all c\`adl\`ag functions on $[0,\infty)$. Additionally, we denote by $\mathcal{E}^{(\infty)}$ the set of excursions with infinite length. We will work with the excursion process $\epsilon\eqdef \{\epsilon_t: t\geq0\}$ of $Y$, with values on $\mathcal{E}\cup\mathcal{E}^{(\infty)}$, given by
\[
\epsilon_t=\{Y_s:S^{-1}_{t-}\leq s<S^{-1}_t\}, \quad \text{if $S^{-1}_{t-}<S^{-1}_t$,}
\]
where $S^{-1}$ denotes the right-inverse of the local time of $Y$ at $0$. Hence, noting that the process $S$ is continuous and non-decreasing, and following the proof of Theorem 1 in \cite{Avram04} we have \begin{align*}
\E_{0}\left[\int_0^{\sigma_{\br}}\expo^{-qt}g(\br-Y_t)\circ\ud S_t\right]&=\E_{0}\left[\int_0^{\sigma_{\br}}\expo^{-qt}g(\br-Y_t)\ud S_t\right]\notag\\&=\E_{0}\left[\int_0^{\infty}\expo^{-qS^{-1}_t}g(\br-\epsilon_{S^{-1}_t})1_{\{\bar{\epsilon}_{ S^{-1}_t}\leq\br, t<S_{\infty} \}}\ud t\right],
\end{align*}
where $\bar{\epsilon}_t=\sup_{s\leq t}\epsilon_s$ for $t>0$ . By the lack of negative jumps of the process $\epsilon$,  we have that $\epsilon_{S^{-1}_t}=0$, hence using \eqref{fluc_1}
\begin{align}\label{fluc_rp_3}
V_{\br}(\br)&=\E_{0}\left[\int_0^{\infty}\expo^{-qS^{-1}_t}g(\br-\epsilon_{S^{-1}_t})1_{\{\bar{\epsilon}_{ S^{-1}_t}\leq\br, t<S_{\infty} \}}\ud t\right]\notag\\
&=g(\br)\E_{0}\left[\int_0^{\infty}\expo^{-qS^{-1}_t}1_{\{\bar{\epsilon}_{ S^{-1}_t}\leq\br, t<S_{\infty} \}}\ud t\right]\notag\\
&=g(\br)\E_{0}\left[\int_0^{\tau_{\br}}\expo^{-qt}\ud S_t\right]\notag\\
&=g(\br)\E_{\br}\left[\int_0^{\tau_{\br}}\expo^{-qt}\ud L^{\br}_t\right]=g(\br)\frac{W^{(q)}(\br)}{W^{(q)\prime}(\br)},
\end{align}
where the last equality follows from the proof of Theorem 1 in \cite{Avram04}. Finally using \eqref{fluc_rp_3} in \eqref{fluc_rp_2} and  \eqref{fluc_rp_1} gives the result.
\end{proof}

\begin{proof}[Proof of Lemma \ref{lv1}]
By the definition of $V$, it follows that $V_{\hat{\pi}}(x)\leq V(x)$ for all $x\geq0$. Now, let us write $w\eqdef V_{\hat{\pi}}$ and we will show that $w(x)\geq V(x)$ for all $x\geq0$. First, assume that $x>0$ and define, for any $\pi\in\mathcal{S}$, $\tau_0^{\pi}\eqdef \inf\{t>0:X_t^{\pi}\leq 0\}$ and denote by $\mathcal{S}_0$ the following set of admissible strategies:
	 \[
	 \mathcal{S}_0\eqdef \left\{\pi\in\mathcal{S}:V_{\pi}(x)=\E_x\left[\int_0^{\tau^{\pi}_0}\expo^{-qs}g(X^\pi_{s-})\circ\ud L^\pi_s\right]\ \text{for all $x>0$}\right\}.
	 \] 
Then, by the proof of Lemma 4 in \cite{KLP} it is enough to show that $w(x)\geq V_\pi(x)$ for any $\pi\in\mathcal{S}_0$. Hence, fix $\pi\in\mathcal{S}_0$ and let $T_n\eqdef \inf\{t>0: X_t^{\pi}>n\ \text{or} \ X_t^{\pi}<1/n\}$.
	  Noting that $X^{\pi}$ is a semi-martingale and that $w$ is sufficiently smooth on $(0,\infty)$, we can use the change of variable/It\^o's formula on $\expo^{-q(t\wedge T_n)}w(X_{t\wedge T_n}^{\pi})$, with $t\geq0$  (see \cite[Thm. 33, pp. 81]{protter}), to obtain
	\begin{align}\label{veri1}
		w(x)&= \expo^{-q(t\wedge T_n)}w({X}^\pi_{t\wedge T_n})- \int_{0}^{t\wedge T_n}\expo^{-qs} (\mathcal{L}-q)w({X}^\pi_{s-}) \mathrm{d}s\notag\\ 
		&\quad+\int_{[0, t\wedge T_n]}\expo^{-qs}w'({X}^\pi_{s-}) \mathrm{d}L_s^{\pi,c}-M_{t\wedge T_n}+J_{t\wedge T_n},
	\end{align}
	where $M\eqdef \{M_t:t\geq0\}$ is a local martingale with $M_0=0$ and $J\eqdef \{J_t:t>0\}$ is given by
	\begin{align*}
		J_t&=\sum_{0\leq s\leq t}\expo^{-qs}\left[w({X}^\pi_{s-}+\Delta X_{s})-w({X}^\pi_{s-}+\Delta X_{s}-\Delta L^{\pi}_{s})\right]1_{\{\Delta L^{\pi}_s\not=0\}}\\
		&=\sum_{0\leq s\leq t}\expo^{-qs}\Delta L^{\pi}_{s}\int_{0}^{1}w'(X^{\pi}_{s-}+\Delta X_{s}-\lambda\Delta L^{\pi}_{s})\der \lambda,\quad t>0.
	\end{align*}
	By assumption, we know that $(\mathcal{L}-q)w\leq0$, $w'\geq g$ and $w\geq0$ on $(0,\infty)$. Hence, taking the expectations in \eqref{veri1} and using \eqref{p1}, yields that
	\begin{align*}
		w(x)\geq \E_x\left[\int_0^{t\wedge T_n}\expo^{-qs}g(X^\pi_{s-})\circ\ud L^\pi_s\right].
	\end{align*}
Letting $t,n\uparrow\infty$ in the previous inequality, using monotone convergence, the fact that $T_n\uparrow \tau_0^{\pi}$ $\mathbb{P}_x$-a. s. and that $\pi\in\mathcal{S}_0$, we have that
		\begin{align*}
		w(x)\geq \E_x\left[\int_0^{\tau^{\pi}_{0}}\expo^{-qs}g(X^\pi_{s-})\circ\ud L^\pi_s\right].
	\end{align*}
	Therefore $w(x)\geq V(x)$ for all $x>0$. Finally, using the fact that $V$ is non-decreasing together with the right continuity of $w$ at zero, we obtain that $V(0)\leq \lim_{x\downarrow0}V(x)\leq\lim_{x\downarrow0}w(x)=w(0)$.
\end{proof}

\begin{proof}[Proof of Theorem \ref{Ta1}]
We have by construction that $\pi_{\br^*}\in\mathcal{S}$, therefore by Lemma \ref{lv1} we only need to show that $V_{\br^*}$ satisfies \eqref{hjb1} for $x>0$. By \eqref{harmonic_sf}
\begin{align*}
(\mathcal{L}-q)V_{\br^{*}}(x)=(\mathcal{L}-q)\left(\frac{g(\br^*)}{W^{(q)}(\br^*)}W^{(q)}(x)\right)=0,\quad\text{$0\leq x\leq \br^*$}.
\end{align*}
On the other hand, \eqref{lower_barrier} gives $\dfrac{g(x)}{W^{(q)\prime}(x)}\leq \dfrac{g(\br^*)}{W^{(q)\prime}(\br^*)}$ for $0\leq x\leq \br^*$, this implies 
\begin{align*}
g(x)-V'_{\br^{*}}(x)=g(x)-g(\br^*)\frac{W^{(q)\prime}(x)}{W^{(q)\prime}(\br^*)}\leq 0,\quad\text{$0\leq x\leq \br^*$}.
\end{align*}
Hence, $V_{\br^{*}}$ satisfies \eqref{hjb1} on $[0,\br^*]$. Meanwhile, using \eqref{vf1} we note that
\begin{equation}\label{eq6}
V'(x)=g(x)\quad  \text{for}\ x \geq\br^*.
\end{equation}
For any $x>\br^*$, let us consider the ER associated with the barrier strategy at the level $x$, $V_x$, which is given in \eqref{vf1}. We recall that, by Lemma \ref{smoothness}, $V_{x}\in C^{1}((0,\infty))\cap C^{2}((0,\infty)\setminus\{x\})$ and $V_{\br^{*}}\in C^{2}((0,\infty))$. We aim to prove that
\begin{equation}\label{claim_gen}
(\mathcal{L}-q)(V_{\br^{*}}-V_x)(x-)\leq 0,\quad x>\br^*.
\end{equation}
To this end, we check the following:
\begin{itemize}
\item[(i)] Condition \eqref{decrF1} implies that $V''_x(x-)=g(x)\dfrac{W^{(q)\prime\prime}(x)}{W^{(q)\prime}(x)}\geq g'(x)=V''_{\br^{*}}(x)$.
\item[(ii)] By \eqref{lower_barrier} we obtain that 
\begin{align*}
	V_x'(u)=\frac{g(x)}{W^{(q)\prime}(x)}W^{(q)\prime}(u)\leq \frac{g(\br^*)}{W^{(q)\prime}(\br^*)}W^{(q)\prime}(u)=V'_{\br^{*}}(u),\quad \text{for $u\in[0,\br^*]$}.
\end{align*}
On the other hand, by Remark \ref{der_f_re} we have  that $\dfrac{g(x)}{W^{(q)\prime}(x)}\leq \dfrac{g(u)}{W^{(q)\prime}(u)}$  for $u\in[\br^*,x]$. Hence 
\begin{align*}
V_x'(u)=\frac{g(x)}{W^{(q)\prime}(x)}W^{(q)\prime}(u)\leq g(u)=V'_{\br^{*}}(u)\quad \text{for}\  u\in[\br^*,x].
\end{align*}
The previous arguments imply that $(V'_{\br^{*}}-V'_{x})(u)\geq0$ for $u\in[0,x]$.
\item[(iii)] By \eqref{lower_barrier} we obtain that
\[
V_x(\br^*)=\frac{g(x)}{W^{(q)\prime}(x)}W^{(q)}(\br^*)\leq \frac{g(\br^*)}{W^{(q)\prime}(\br^*)}W^{(q)}(\br^*)=V_{\br^{*}}(\br^*).
\]
The above inequality together with (ii) implies that $(V_{\br^{*}}-V_x)(x)\geq0$.
\item[(iv)] By \eqref{der_vf_1} we have that $V_{x}'(x)=g(x)=V_{\br^{*}}'(x)$.
\item[(v)] Using (ii) we obtain that $(V_{\br^{*}}-V_{x})(x-z)\leq (V_{\br^{*}}-V_{x})(x)$ for $z\in(0,x)$. Additionally, by (iii) we have $(V_{\br^{*}}-V_{x})(x-z)=0\leq (V_{\br^{*}}-V_{x})(x)$ for $z>x$.
\end{itemize}
Therefore using (i)-(v) and the fact that
\begin{multline*}
	(\mathcal{L}-q)(V_{\br^{*}}-V_x)(x-)=\gamma(V_{\br^{*}}'-V_x')(x)+\frac{\sigma^2}{2}(V_{\br^{*}}''(x)-V''_x(x-))-q(V_{\br^{*}}-V_x)(x)\\
	+\int_{(0,\infty)}[(V_{\br^{*}}-V_x)(x-z)-(V_{\br^{*}}-V_x)(x)-(V_{\br^{*}}'-V_x')(x)z1_{\{0<z<1\}}]\Pi(dz),
\end{multline*}
we obtain \eqref{claim_gen}. So, proceeding like in the proof of Theorem 2 in \cite{Loeffen082} we obtain that
\begin{align}\label{des_gen_sig=0}
(\mathcal{L}-q)V_{\br^{*}}(x)\leq0,\quad\text{for $x\geq \br^*$.}
\end{align}
Hence, by \eqref{eq6} and \eqref{des_gen_sig=0}, $V_{\br^{*}}$ satisfies \eqref{hjb1} on $[\br^*,\infty)$. 
\end{proof}

\section{Proofs of multibarrier strategies}\label{appendixmulti}

\begin{proof}[Proof of Proposition \ref{c5}]
For $x\in[0,\br_{1})$, we obtain, by the strong Markov property, that
\begin{align*}
		V_{\bu}(x)=\E_x\left[\expo^{-q\tau_{\br_{1}}^+}\E_{\br_{1}}\left[\int_{0}^{\tau_{\bu}}\expo^{-qt}g(X^{\bu}_{t-})\circ\ud L^{\bu}_t\right];\tau_{\br_{1}}^+<\tau_0^-\right]=V_{\bu}(\br_{1})\frac{W^{(q)}(x)}{W^{(q)}(\br_{1})},
\end{align*}
and proceeding like in the proof of Proposition \ref{aux2} we have
\begin{align*}
			V_{\bu}(\br_{1})
			=g(\br_{1})\frac{W^{(q)}(\br_{1})}{W^{(q)\prime}(\br_{1})}.
\end{align*}
On the other hand, for $x\in[\br_{1},\br_{2}]$
		\begin{align*}
		V_{\bu}(x)&=(x-\br_{1})\int_0^1g(x-\lambda(x-\br_{1}))d\lambda+\E_{\br_{1}}\left[\int_0^{\tau_{\bu}}\expo^{-qt}g(X^{\bu}_{t-})\circ\ud L^{\bu}_t\right]\\
		&=(x-\br_{1})\int_0^1g(x-\lambda(x-\br_{1}))d\lambda+V_{\bu}(\br_{1})\notag\\
		&=\int_{\br_{1}}^xg(u)du+V_{\bu}(\br_{1})=G(x)-G(\br_{1})+V_{\bu}(\br_{1})=H(x;\br_1).\notag
		\end{align*}
Now, let $x\in(\br_{2},\br_{3})$. By the Strong Markov property and \eqref{l1}, we have that 
		\begin{align}\label{bar2}
		V_{\bu}(x)&=\E_{x}\left[\int_0^{\tau_{\bu}}\expo^{-qt}g(X^{\bu}_{t-})\circ\ud L^{\bu}_t;\tau_{\br_{2}}^-<\tau_{\br_{3}}^+\right]+\E_{x}\left[\int_0^{\tau_{\bu}}\expo^{-qt}g(X^{\bu}_{t-})\circ\ud L^{\bu}_t;\tau_{\br_{3}}^+<\tau_{\br_{2}}^-\right]\\
		&=V_{\bu}(\br_{2})\E_{x}\left[\expo^{-q\tau_{\br_{2}}^-};\tau_{\br_{2}}^-<\tau_{\br_{3}}^+\right]+V_{\bu}(\br_{3})\E_{x}\left[\expo^{-\tau_{\br_{3}}^+};\tau_{\br_{3}}^+<\tau_{\br_{2}}^-\right]\notag\\
		&=H(\br_{2};\br_{1})\left(Z^{(q)}(x-\br_{2})-\frac{Z^{(q)}(\br_{3}-\br_{2})}{W^{(q)}(\br_{3}-\br_{2})}W^{(q)}(x-\br_{2})\right)+V_{\bu}(\br_{3})\frac{W^{(q)}(x-\br_{2})}{W^{(q)}(\br_{3}-\br_{2})}.\notag
		\end{align}
Note that
			\begin{align*}
			V_{\bu}(\br_{3})=\E_{\br_{3}}\left[\int_0^{\tau_{\br_{2}}^{\br_{3}}}\expo^{-qt}g(X^{\br_{3}}_{t})\circ\ud L^{\br_3}_t\right]+V_{\bu}(\br_{2})\E_{\br_{3}}\left[\expo^{-q\tau_{\br_{2}}^{\br_{3}}};\tau_{\br_{2}}^{\br_{3}}<\infty\right],
			\end{align*}
where $X^{\br_{3}}_{t}=X_{t}-L_{t}^{\br_{3}}$,   
			\begin{equation*}
			L_{t}^{\br_{3}}=\bigg(\sup_{0\leq s<t}\{X_{s}-\br_{3}\}\bigg)\vee 0, \quad t\geq0,
			\end{equation*}
and $\tau_{\br_2}^{\br_3}\eqdef \inf\{t\geq0: X^{\br_3}_{t}<\br_2\}$. Hence, by the spatial homogeneity of Brownian motion, we have that
			\begin{align}
			V_{\bu}(\br_{3})=\E_{\br_{3}-\br_{2}}\left[\int_0^{\tau_{0}^{\br_{3}-\br_2}}\expo^{-qt}g(X^{\br_{3}-\br_{2}}_{t}+\br_{2})\circ\ud L^{\br_3-\br_{2}}_t\right]+V_{\bu}(\br_{2})\E_{\br_{3}-\br_{2}}\left[\expo^{-q\tau_{0}^{\br_{3}-\br_2}};\tau_{0}^{\br_{3}-\br_2}<\infty\right].
			\end{align}
Again, proceeding as in the proof of Proposition \ref{aux2}
			\begin{align}
			\E_{\br_{3}-\br_{2}}\left[\int_0^{\tau_{0}^{\br_{3}-\br_2}}\expo^{-qt}g(X^{\br_{3}-\br_{2}}_{t}+\br_{2})\circ\ud L^{\br_3-\br_{2}}_t\right]=g(\br_{3})\frac{W^{(q)}(\br_{3}-\br_{2})}{W^{(q)\prime}(\br_{3}-\br_{2})}.
			\end{align}
On the other hand, by identity (3.10) in \cite{AvPaPi07}, we obtain
			\begin{align}\label{aux_4}
			\E_{\br_{3}-\br_{2}}\left[\expo^{-q\tau_{0}^{\br_{3}-\br_2}};\tau_{0}^{\br_{3}-\br_{2}}<\infty\right]=Z^{(q)}(\br_{3}-\br_{2})-q\frac{(W^{(q)}(\br_{3}-\br_{2}))^2}{W^{(q)\prime}(\br_{3}-\br_{2})}.
			\end{align}
Therefore, \eqref{bar2}--\eqref{aux_4} give
		$V_{\br_{3}}(\br_{3})=\phi(\br_{3};\{\br_{i}\}_{i=1}^{3})$. Finally, we obtain the result by induction and similar arguments as above.
\end{proof}

\begin{proof}[Proof of Proposition \ref{n11}](i) Since $F'(\br^{*}_{1})=0$,  by \eqref{H1}, \eqref{der1} and \eqref{Wp1}, we get that 
\begin{align*}
\dfrac{\sigma^{2} }{2}g'(\br_1^*)+\mu g(\br_1^*)&=\dfrac{\sigma^{2} }{2}g(\br_1^*)\dfrac{W^{(q)\prime\prime}(\br_1^*)}{W^{(q)\prime}(\br_1^*)}+\mu g(\br_1^*)\\
&=\dfrac{g(\br_1^*)}{W^{(q)\prime}(\br_1^*)}\left(\dfrac{\sigma^{2} }{2}W^{(q)\prime\prime}(\br_1^*)-\mu W^{(q)\prime}(\br_1^*)\right)\\
&=qW^{(q)}(\br_1^*)\dfrac{g(\br_1^*)}{W^{(q)\prime}(\br_1^*)}=qH(\br_1^*;\br_1^*).
\end{align*}	
Thus, by \eqref{eqH1}, with $k=0$, we obtain \eqref{HJB11}.

(ii) We first show that
\begin{align}\label{j3}
		H(v;\br^{*}_{1})>g(v)\dfrac{W^{(q)}(v)}{W^{(q)\prime}(v)}=:\Xi(v)\quad\text{for}\ v\in(\br^{*}_{1},\br^{(1)}_{1}).
\end{align}
This follows since
\begin{equation*}
	H(\br^{*}_{1};\br^{*}_{1})=g(\br^{*}_{1})\dfrac{W^{(q)}(\br^{*}_{1})}{W^{(q)\prime}(\br^{*}_{1})}=\Xi(\br^{*}_{1}),
\end{equation*} 
and  \eqref{W5} implies that for $\ v\in(\br^{*}_{1},\br^{(1)}_{1})$
\begin{align*}
\dfrac{\der}{\der v}\Xi(v)&=g(v)+\dfrac{W^{(q)}(v)}{W^{(q)\prime}(v)}\left(g'(v)-\dfrac{g(v)W^{(q)\prime\prime}(v)}{W^{(q)\prime}(v)}\right)\\
&<g(v)=\dfrac{\der}{\der v}H(v;\br^*_{1}).
\end{align*}
Therefore, using again  \eqref{W5}, \eqref{Wp1} and \eqref{j3} we get that \eqref{Hj21} holds on $(\br^{*}_{1},\br^{(1)}_{1})$.
\end{proof}

\begin{proof}[Proof of Lemma \ref{propm1}]
Using \eqref{dnp}, \eqref{par11} is equivalent to $\overline{F}(\Cr_1,z,\br^*_1)>0$ and using \eqref{Wp1}, this is equivalent to
\begin{equation*}
\Gamma(z)\eqdef g'(z)-qH(\Cr_{1};\br^{*}_{1})W^{(q)\prime}(z-\Cr_{1})-\dfrac{2}{\sigma^2}F(\Cr_1,z;\br^*_1)\bigg(qW^{(q)}(z-\Cr_{1})-\mu W^{(q)\prime}(z-\Cr_{1})\bigg)>0.
\end{equation*}
Since $\Gamma(\Cr_1)=0$ due to \eqref{W0}, \eqref{eq9p}, \eqref{eqH1} and \eqref{ineq1.0.0}, it is enough to verify that $\Gamma'(\Cr_1)>0$ in order to prove \eqref{par11} by the $C^{1}$-continuity of this function. Then,
\begin{align*}
\Gamma'(z)=&g''(z)-qH(\Cr_{1};\br^{*}_{1})W^{(q)\prime\prime}(z-\Cr_{1})-\dfrac{2}{\sigma^2}F(\Cr_1,z;\br^*_1)\bigg(qW^{(q)\prime}(z-\Cr_{1})-\mu W^{(q)\prime\prime}(z-\Cr_{1})\bigg)\\
&-\dfrac{2}{\sigma^2}\frac{\overline{F}(\Cr_1,z;\br^*_1)}{W^{(q)\prime}(z-\Cr_{1})}\bigg(qW^{(q)}(z-\Cr_{1})-\mu W^{(q)\prime}(z-\Cr_{1})\bigg)
\end{align*}
and taking $z\downarrow\Cr_1$, by \eqref{eqH1} and \eqref{ineq1.0.0} we obtain that
\begin{align*}
\Gamma'(\Cr_1)&=g''(\Cr_1)+qH(\Cr_{1};\br^{*}_{1})\dfrac{4\mu}{\sigma^4}-\dfrac{2q}{\sigma^2}g(\Cr_1)-\dfrac{4\mu^2}{\sigma^4}g(\Cr_1)\\
&=g''(\Cr_1)+qH(\Cr_{1};\br^{*}_{1})\dfrac{4\mu}{\sigma^4}-\dfrac{2q}{\sigma^2}g(\Cr_1)-\dfrac{2\mu}{\sigma^2}\left(\dfrac{2q}{\sigma^2}H(\Cr_{1};\br^{*}_{1})-g'(\Cr_1)\right)\\
&=g''(\Cr_1)-\dfrac{2q}{\sigma^2}g(\Cr_1)+\dfrac{2\mu}{\sigma^2}g'(\Cr_1).
\end{align*}
Now, from \eqref{ineq1.0.0} we know that there exists ${\varepsilon}_{1}>0$ such that if $z\in(\Cr_{1},\Cr_{1}+\varepsilon_{1})$, then
$$\dfrac{2q}{\sigma^{2}}H(z;\br^{*}_{1})<g'(z)+\dfrac{2\mu}{\sigma^{2}}g(z)=:h(z).$$
We also know that $\dfrac{2q}{\sigma^{2}}H(\Cr_1;\br^{*}_{1})=h(\Cr_1)$. Hence, by taking the derivative at $z=\Cr_1$ we must have that
$$\dfrac{2q}{\sigma^{2}}g(\Cr_{1})<h'(\Cr_1)=g''(\Cr_{1})+\dfrac{2\mu}{\sigma^{2}} g'(\Cr_{1}),$$
therefore $\Gamma'(\Cr_1)>0$.
\end{proof}

\begin{proof}[Proof of Proposition \ref{pmax1}]
	By \eqref{ex2}, it follows that for $v\in(\br^{*}_{1},\br^{(1)}_{1})$, 
	\begin{align*}
\dfrac{g(\br^{*}_{1})}{W^{(q)\prime}(\br^{*}_{1})}>\dfrac{g(v)}{W^{(q)\prime}(v)}\quad\text{and}\quad g(z)<g(v)\dfrac{W^{(q)\prime}(z)}{W^{(q)\prime}(v)}, \quad \text{for}\ z>v.
\end{align*}
From here and \eqref{H1}, we have that
	\begin{align*}
	F(v,z;\br^{*}_{1})&=\dfrac{g(z)-qH(v;\br^*_{1})W^{(q)}(z-v)}{W^{(q)\prime}(z-v)}\\
	&=\dfrac{g(z)-qg(\br^{*}_{1})\dfrac{W^{(q)}(\br^{*}_{1})}{W^{(q)\prime}(\br^{*}_{1})}W^{(q)}(z-v)-q(G(v)-G(\br^{*}_{1}))W^{(q)}(z-v)}{W^{(q)\prime}(z-v)}\\
	&<g(v)\bigg(\dfrac{W^{(q)\prime}(z)-qW^{(q)}(\br^{*}_{1})W^{(q)}(z-v)}{W^{(q)\prime}(v)W^{(q)\prime}(z-v)}\bigg)-q(G(v)-G(\br^{*}_{1}))\dfrac{W^{(q)}(z-v)}{W^{(q)\prime}(z-v)}=:\Lambda(v,z;\br^{*}_{1}).
	\end{align*}
	Observe that if $z\mapsto\Lambda(v,z;\br^{*}_{1})$ is decreasing for $z>v$, it follows that  $F(v,z;\br^{*}_{1})<\dfrac{\sigma^{2}}{2}g(v)$ for $z>v$, because of $\Lambda(v,v;\br^{*}_{1})=F(v,v;\br_{1}^{*})=\dfrac{\sigma^{2}}{2}g(v)$. Let us verify that $\dfrac{\partial}{\partial z}\Lambda(z,v;\br^{*}_{1})<0$. Calculating the first derivative with respect to $z$, we get that
	\begin{align}\label{W3}
	&\dfrac{g(v)}{W^{(q)\prime}(v)(W^{(q)\prime}(z-v))^{2}}\bigg(\Big(W^{(q)\prime\prime}(z)-qW^{(q)}(\br^{*}_{1})W^{(q)\prime}(z-v)\Big)W^{(q)\prime}(z-v)\notag\\
	&\quad-\Big(W^{(q)\prime}(z)-qW^{(q)}(\br^{*}_{1})W^{(q)}(z-v)\Big)W^{(q)\prime\prime}(z-v)\bigg)\notag\\
	&\quad-\dfrac{q(G(v)-G(\br_{1}^{*}))}{(W^{(q)\prime}(z-v))^{2}}\Big((W^{(q)\prime}(z-v))^{2}-W^{(q)}(z-v)W^{(q)\prime\prime}(z-v)\Big)<0\notag\\
	&\Longleftrightarrow\notag\\
	&\dfrac{g(v)}{W^{(q)\prime}(v)}\Big(W^{(q)\prime\prime}(z)W^{(q)\prime}(z-v)-W^{(q)\prime}(z)W^{(q)\prime\prime}(z-v)\Big)\\
	&\quad-q\bigg(G(v)-G(\br_{1}^{*})+\dfrac{g(v)}{W^{(q)\prime}(v)}W^{(q)}(\br^{*}_{1})\bigg)\Big((W^{(q)\prime}(z-v))^{2}-W^{(q)}(z-v)W^{(q)\prime\prime}(z-v)\Big)<0.\notag
	\end{align}
	Using \eqref{W2}--\eqref{W1}, we see that \eqref{W3} is equivalent to 
	\begin{align}\label{W4}
	&\dfrac{g(v)}{W^{(q)\prime}(v)}\dfrac{4q}{\sigma^{4}}\expo^{(\Phi(q)-\xi_{1})(z-v)}W^{(q)}(v)-q\bigg(G(v)-G(\br_{1}^{*})+\dfrac{g(v)}{W^{(q)\prime}(v)}W^{(q)}(\br^{*}_{1})\bigg)\dfrac{4}{\sigma^{4}} \expo^{(\Phi(q)-\xi_{1})(z-v)}<0\notag\\
	&\Longleftrightarrow\notag\\
	&\dfrac{g(v)}{W^{(q)\prime}(v)}(W^{(q)}(v)-W^{(q)}(\br^{*}_{1}))<G(v)-G(\br_{1}^{*}).
	\end{align}
	Since both sides of \eqref{W4} are zero at $v=\br^{*}_{1}$, \eqref{W4} is equivalent to prove that 
	\begin{align}
	&\dfrac{\der}{\der v}\bigg[\dfrac{g(v)}{W^{(q)\prime}(v)}(W^{(q)}(v)-W^{(q)}(\br^{*}_{1}))\Bigg]<\dfrac{\der}{\der v}[G(v)-G(\br_{1}^{*})]\notag\\
	&\Longleftrightarrow\notag\\
	&\dfrac{\Big(W^{(q)}(v)-W^{(q)}(\br^{*}_{1})\Big)}{(W^{(q)\prime}(v))^{2}}\Big(g'(v)W^{(q)\prime}(v)-g(v)W^{(q)\prime\prime}(v)\Big)+\dfrac{g(v)}{W^{(q)\prime}(v)}W^{(q)\prime}(v)<g(v)\notag\\
	&\Longleftrightarrow\notag\\
	&\dfrac{\Big(W^{(q)}(v)-W^{(q)}(\br^{*}_{1})\Big)}{W^{(q)\prime}(v)}\bigg(g'(v)-g(v)\dfrac{W^{(q)\prime\prime}(v)}{W^{(q)\prime}(v)}\bigg)<0\notag.
	\end{align}
  From here and \eqref{W5}, it follows that \eqref{eq21} is true. Letting $v\downarrow\br^{*}_{1}$ it can be easily verified that \eqref{eq21} holds for $\br_1^*$, since $F(\cdot,\cdot;\br^{*}_{1})\in  C(\bar{\mathcal{A}}_{1})$.
\end{proof}

\begin{proof}[Proof of Theorem \ref{prin1}]
Condition \eqref{cond3} immediately implies that $\br^*_2<\Cr_1$, so it only remains to show that $\br^*_2\in\mathcal{D}_1$ to prove the strict inequalities. Suppose that $\br^*_2\notin\mathcal{D}_1$, that is,
\begin{equation}\label{notD1}
F(\br^*_2,z;\br^*_1)<F(\br^*_2,\br^*_2;\br^*_1)=\frac{\sigma^2}{2}g(\br_2^*), \quad \text{for}\ z>\br^*_2.
\end{equation}
Note that \eqref{notD1} is equivalent to
\begin{equation}\label{e14}
	\bar{\Lambda}(\br^{*}_{2},z;\br^{*}_{1})>F(z), \quad \text{for}\ z>\br^*_2,
\end{equation}
where
$$ \bar{\Lambda}(v,z;\br^{*}_{1})\eqdef\dfrac{\sigma^{2}}{2}g(v)\dfrac{W^{(q)\prime}(z-v)}{W^{(q)\prime}(z)}+qH(v;\br^{*}_{1})\dfrac{W^{(q)}(z-v)}{W^{(q)\prime}(z)}, \quad \text{for}\ z\geq v.$$
Also note that for any $\br^*_1\leq v$, $F(v)=\bar{\Lambda}(v,v+;\br^{*}_{1})$.  Additionally, differentiating $\bar{\Lambda}(v,z;\br^{*}_{1})$ with respect to $z$, letting $z\downarrow  v$, and using \eqref{W0}, \eqref{eq1}, \eqref{der1} and \eqref{eqH1}, it follows that
\begin{align*}
	\partial_z\bar{\Lambda}(v,v+;\br^{*}_{1})&=\frac{1}{W^{(q)\prime}(v)}\left(\dfrac{2q}{\sigma^{2}}H(v;\br^{*}_{1})-\dfrac{2\mu}{\sigma^{2}}g(v)-g(v)\frac{W^{(q)\prime\prime}(v)}{W^{(q)\prime}(v)}\right)\\
	&=F'(v)-\frac{1}{W^{(q)\prime}(v)}\dfrac{2}{\sigma^{2}}(\mathcal{L}-q)H(v;\br^*_1).
\end{align*}
Now, using \eqref{cond3}, it yields $\partial_z\bar{\Lambda}(v,v+;\br^{*}_{1})>F'(v)$ for $v\in(\br^{*}_{2},\br^{*}_{2}+\epsilon_{1})$. From here, the smoothness of $\bar{\Lambda}$ and \eqref{e14}, it implies that there exists $\bar{\epsilon}_{1}<\epsilon_{1}$ such that $\bar{\Lambda}(v,z;\br^{*}_{1})>F(z)$, for $v\in(\br^{*}_{2},\br^{*}_{2}+\bar{\epsilon}_{1})$ and  $z>v$, which is equivalent to $F(v,z;\br^{*}_{1})>F(v,v;\br^{*}_{1})$, for $v\in(\br^{*}_{2},\br^{*}_{2}+\bar{\epsilon}_{1})$ and  $z>v$.  Thus, $\br^*_2$ cannot be the infimum of $\mathcal{D}_1$, which is a contradiction. Therefore, $\br^*_2\in\mathcal{D}_1$. Additionally, using again \eqref{cond3} and arguing  similarly as in the proof of Lemma \ref{propm1}, it can be proven that $z\mapsto F(\br^{*}_{2},z;\br^{*}_{1})$ is non-increasing locally at $\br^{*}_{2}$, and concluding that  \eqref{b2} follows due to the continuity of $F(\cdot,\cdot;\br_1^*)$.
\end{proof}

\begin{proof}[Proof of Proposition \ref{n11gen}]
We first show that for $v\in(\br^{*}_{2k-1},\br^{(1)}_{2k-1})$
\begin{multline}\label{j3n}
		H(v;\bu^{*}_{2k-1})\\
		>H(\br^*_{2k-2};\bu^{*}_{2k-3})Z^{(q)}(v-\br^*_{2k-2})+W^{(q)}(v-\br^*_{2k-2})F(\br^{*}_{2k-2},v;\bu^{*}_{2k-3})=:\Xi(v;\bu^{*}_{2k-2}).
\end{multline}
This follows since
\begin{equation*}
	H(\br^{*}_{2k-1};\bu^{*}_{2k-1})=\phi(\br^{*}_{2k-1};\bu^{*}_{2k-1})=\Xi(\br^{*}_{2k-1};\bu^{*}_{2k-2}),
\end{equation*} 
and, \eqref{dnp} and \eqref{W5n} implies that for $\ v\in(\br^{*}_{2k-1},\br^{(1)}_{2k-1})$
\begin{align*}
\dfrac{\der}{\der v}\Xi(v;\bu^{*}_{2k-2})&=qH(\br^*_{2k-2};\bu^{*}_{2k-3})W^{(q)}(v-\br^*_{2k-2})+W^{(q)\prime}(v-\br^*_{2k-2})F(\br^{*}_{2k-2},v;\bu^{*}_{2k-3})\\
&\quad+W^{(q)}(v-\br^*_{2k-2})\frac{\overline{F}(\br^{*}_{2k-2},v;\bu^{*}_{2k-3})}{W^{(q)\prime}(v-\br^*_{2k-2})}\\
&<qH(\br^*_{2k-2};\bu^{*}_{2k-3})W^{(q)}(v-\br^*_{2k-2})+W^{(q)\prime}(v-\br^*_{2k-2})F(\br^{*}_{2k-2},v;\bu^{*}_{2k-3})\\
&=g(v)=\dfrac{\der}{\der v}H(v;\bu^{*}_{2k-1}).
\end{align*}
Therefore, using again \eqref{W5n}, \eqref{Wp1}, \eqref{Zp1} and \eqref{j3n} we get that for $\ v\in(\br^{*}_{1},\br^{(1)}_{2k-1})$
\begin{align*}
\dfrac{\sigma^{2} }{2}g'(v)+\mu g(v)&<\dfrac{\sigma^{2} }{2}\left(g'(v)-\overline{F}(\br^{*}_{2k-2},v;\bu^{*}_{2k-3})\right)\\
&\quad+\mu\left(qH(\br^*_{2k-2};\bu^{*}_{2k-3})W^{(q)}(v-\br^*_{2k-2})+W^{(q)\prime}(v-\br^*_{2k-2})F(\br^{*}_{2k-2},v;\bu^{*}_{2k-3})\right)\\
&=qH(\br^*_{2k-2};\bu^{*}_{2k-3})\left(\mu W^{(q)}(v-\br^*_{2k-2})+\dfrac{\sigma^{2} }{2}W^{(q)\prime}(v-\br^*_{2k-2})\right)\\
&\quad+F(\br^{*}_{2k-2},v;\bu^{*}_{2k-3})\left(\mu W^{(q)\prime}(v-\br^*_{2k-2})+\dfrac{\sigma^{2} }{2}W^{(q)\prime\prime}(v-\br^*_{2k-2})\right)\\
&=q\left(H(\br^*_{2k-2};\bu^{*}_{2k-3})Z^{(q)}(v-\br^*_{2k-2})+W^{(q)}(v-\br^*_{2k-2})F(\br^{*}_{2k-2},v;\bu^{*}_{2k-3})\right)\\
&=q\Xi(v;\bu^{*}_{2k-2})<qH(v;\bu^{*}_{2k-1}).
\end{align*}	
Equality at $v=\br^{*}_{2k-1}$ holds by equality in \eqref{W5n} at $v=\br^{*}_{2k-1}$.
\end{proof}
\begin{proof}[Proof of Proposition \ref{pmaxn}]
	By \eqref{ex2n}, it follows that for $v\in(\br_{2k-1}^{*},\br_{2k-1}^{(1)})$,
	\begin{align*}
	&\frac{g(z)-qH(\br^{*}_{2k-2};\bu^{*}_{2k-3})W^{(q)}(z-\br^{*}_{2k-2})}{W^{(q)\prime}(z-\br^{*}_{2k-2})}<F(\br^{*}_{2k-2},v;\bu^{*}_{2k-3}), \quad\text{ for}\ z>v,\\
	&\Longleftrightarrow\notag\\
	&g(z)<F(\br^{*}_{2k-2},v;\bu^{*}_{2k-3})W^{(q)\prime}(z-\br^{*}_{2k-2})+qH(\br^{*}_{2k-2};\bu^{*}_{2k-3})W^{(q)}(z-\br^{*}_{2k-2}), \quad\text{ for}\ z>v,
	\end{align*}
	and $-F(\br^{*}_{2k-2},\br^{*}_{2k-1};\bu^{*}_{2k-3})<-F(\br^{*}_{2k-2},v;\bu^{*}_{2k-3})$. From here, \eqref{phin}, \eqref{Hn} and \eqref{Fn1}, we have that
		\begin{align*}
		F(v,z;\bu^{*}_{2k-1})&=\dfrac{1}{W^{(q)\prime}(z-v)}[g(z)-qH(v;\br^*_{2k-1})W^{(q)}(z-v)]\\
		&=\dfrac{1}{W^{(q)\prime}(z-v)}\bigg[g(z)-qW^{(q)}(z-v)\bigg[H(\br^{*}_{2k-2};\bu^{*}_{2k-3})Z^{(q)}(\br^{*}_{2k-1}-\br^{*}_{2k-2})\\
		&\quad+W^{(q)}(\br^{*}_{2k-1}-\br^{*}_{2k-2})F(\br^{*}_{2k-2},\br^{*}_{2k-1};\bu^{*}_{2k-3})+G(v)-G(\br^{*}_{2k-1})\bigg]\bigg]\\
		&=\dfrac{1}{W^{(q)\prime}(z-v)}\bigg[g(z)-qW^{(q)}(z-v)W^{(q)}(\br^{*}_{2k-1}-\br^{*}_{2k-2})F(\br^{*}_{2k-2},\br^{*}_{2k-1};\bu^{*}_{2k-3})\\
		&\quad-qW^{(q)}(z-v)\bigg[H(\br^{*}_{2k-2};\bu^{*}_{2k-3})Z^{(q)}(\br^{*}_{2k-1}-\br^{*}_{2k-2})+G(v)-G(\br^{*}_{2k-1})\bigg]\bigg]\\
		&<\dfrac{1}{W^{(q)\prime}(z-v)}\bigg[F(\br^{*}_{2k-2},v;\bu^{*}_{2k-3})W^{(q)\prime}(z-\br^{*}_{2k-2})+qH(\br^{*}_{2k-2};\bu^{*}_{2k-3})W^{(q)}(z-\br^{*}_{2k-2})\\
		&\quad-qW^{(q)}(z-v)W^{(q)}(\br^{*}_{2k-1}-\br^{*}_{2k-2})F(\br^{*}_{2k-2},v;\bu^{*}_{2k-3})\\
		&\quad-qW^{(q)}(z-v)\bigg[H(\br^{*}_{2k-2};\bu^{*}_{2k-3})Z^{(q)}(\br^{*}_{2k-1}-\br^{*}_{2k-2})+G(v)-G(\br^{*}_{2k-1})\bigg]\bigg]\\
		&=\dfrac{1}{W^{(q)\prime}(z-v)}\bigg[F(\br^{*}_{2k-2},v;\bu^{*}_{2k-3})\Big(W^{(q)\prime}(z-\br^{*}_{2k-2})-qW^{(q)}(z-v)W^{(q)}(\br^{*}_{2k-1}-\br^{*}_{2k-2})\Big)\\
		&\quad+qH(\br^{*}_{2k-2};\bu^{*}_{2k-3})\Big(W^{(q)}(z-\br^{*}_{2k-2})-W^{(q)}(z-v)Z^{(q)}(\br^{*}_{2k-1}-\br^{*}_{2k-2})\Big)\\
		&\quad-qW^{(q)}(z-v)\Big(G(v)-G(\br^{*}_{2k-1})\Big)\bigg]=:\Lambda(v,z;\bu^{*}_{2k-1}).
		\end{align*}
		Observe that if $\Lambda(v,z;\bu^{*}_{2k-1})$ is decreasing with respect to $z$, it follows that  $F(v,z;\bu^{*}_{2k-1})<\dfrac{\sigma^{2}}{2}g(v)$ for $z>v$, because of $\Lambda(v,v;\bu^{*}_{2k-1})=\dfrac{\sigma^{2}}{2}g(v)$. Let us verify that $\dfrac{\partial}{\partial z}\Lambda(z,v;\bu^{*}_{2k-1})<0$. Calculating the first derivative with respect to $z$, we get that
		\begin{align}
		&\dfrac{F(\br^{*}_{2k-2},v;\bu^{*}_{2k-3})}{[W^{(q)\prime}(z-v)]^{2}}\Big(\Big(W^{(q)\prime\prime}(z-\br^{*}_{2k-2})-qW^{(q)\prime}(z-v)W^{(q)}(\br^{*}_{2k-1}-\br^{*}_{2k-2})\Big)W^{(q)\prime}(z-v)\notag\\
		&\quad-W^{(q)\prime\prime}(z-v)\Big(W^{(q)\prime}(z-\br^{*}_{2k-2})-qW^{(q)}(z-v)W^{(q)}(\br^{*}_{2k-1}-\br^{*}_{2k-2})\Big)\Big)\notag\\
		&\quad+q\dfrac{H(\br^{*}_{2k-2};\bu^{*}_{2k-3})}{[W^{(q)\prime}(z-v)]^{2}}\Big(\Big(W^{(q)\prime}(z-\br^{*}_{2k-2})-W^{(q)\prime}(z-v)Z^{(q)}(\br^{*}_{2k-1}-\br^{*}_{2k-2})\Big)W^{(q)\prime}(z-v)\notag\\
		&\quad-W^{(q)\prime\prime}(z-v)\Big(W^{(q)}(z-\br^{*}_{2k-2})-W^{(q)}(z-v)Z^{(q)}(\br^{*}_{2k-1}-\br^{*}_{2k-2})\Big)\Big)\notag\\
		&\quad-q\dfrac{\Big(G(v)-G(\br^{*}_{2k-1})\Big)}{[W^{(q)\prime}(z-v)]^{2}}\Big([W^{(q)\prime}(z-v)]^{2}-W^{(q)\prime\prime}(z-v)W^{(q)}(z-v)\Big)<0\notag\\
		&\Longleftrightarrow\notag\\
		&F(\br^{*}_{2k-2},v;\bu^{*}_{2k-3})\Big( W^{(q)\prime\prime}(z-\br^{*}_{2k-2})W^{(q)\prime}(z-v)-W^{q\prime\prime}(z-v)W^{(q)\prime}(z-\br^{*}_{2k-2})\notag\\
		&\quad-qW^{(q)}(\br^{*}_{2k-1}-\br^{*}_{2k-2})\Big([W^{(q)\prime}(z-v)]^{2}-W^{q\prime\prime}(z-v)W^{(q)}(z-v)\Big)\Big)\notag\\
		&\quad+qH(\br^{*}_{2k-2};\bu^{*}_{2k-3})\Big(W^{(q)\prime}(z-\br^{*}_{2k-2})W^{(q)\prime}(z-v)-W^{(q)\prime\prime}(z-v)W^{(q)}(z-\br^{*}_{2k-2})\notag\\
		&\quad-Z^{(q)}(\br^{*}_{2k-1}-\br^{*}_{2k-2})\Big([W^{(q)\prime}(z-v)]^{2}-W^{(q)\prime\prime}(z-v)W^{(q)}(z-v)\Big)\Big)\notag\\
		&\quad-q\Big(G(v)-G(\br^{*}_{2k-1})\Big)\Big([W^{(q)\prime}(z-v)]^{2}-W^{(q)\prime\prime}(z-v)W^{(q)}(z-v)\Big)<0\notag\\
		&\Longleftrightarrow\notag\\
		&F(\br^{*}_{2k-2},v;\bu^{*}_{2k-3})\Big( W^{(q)\prime\prime}(z-\br^{*}_{2k-2})W^{(q)\prime}(z-v)-W^{q\prime\prime}(z-v)W^{(q)\prime}(z-\br^{*}_{2k-2})\Big)\label{W1.1}\\
		&\quad+qH(\br^{*}_{2k-2};\bu^{*}_{2k-3})\Big(W^{(q)\prime}(z-\br^{*}_{2k-2})W^{(q)\prime}(z-v)-W^{(q)\prime\prime}(z-v)W^{(q)}(z-\br^{*}_{2k-2})\Big)\notag\\
		&\quad-q\Big(F(\br^{*}_{2k-2},v;\bu^{*}_{2k-3})W^{(q)}(\br^{*}_{2k-1}-\br^{*}_{2k-2})+H(\br^{*}_{2k-2};\bu^{*}_{2k-3})Z^{(q)}(\br^{*}_{2k-1}-\br^{*}_{2k-2})\notag\\
		&\quad+G(v)-G(\br^{*}_{2k-1})\Big)\Big([W^{(q)\prime}(z-v)]^{2}-W^{(q)\prime\prime}(z-v)W^{(q)}(z-v)\Big)<0.\notag
		\end{align}
		Using \eqref{W2}--\eqref{W7}, we see that \eqref{W1.1} is equivalent to
		\begin{align}
		&F(\br^{*}_{2k-2},v;\bu^{*}_{2k-3})\dfrac{4q}{\sigma^{4}}\expo^{(\Phi(q)-\xi_{1})(z-v)}W^{(q)}(v-\br^{*}_{2k-2})+qH(\br^{*}_{2k-2};\bu^{*}_{2k-3})\frac{4\expo^{(\Phi(q)-\xi_{1})(z-v)}}{\sigma^{4}}Z^{(q)}(v-\br^{*}_{2k-2})\notag\\
		&\quad-q\Big(F(\br^{*}_{2k-2},v;\bu^{*}_{2k-3})W^{(q)}(\br^{*}_{2k-1}-\br^{*}_{2k-2})+H(\br^{*}_{2k-2};\bu^{*}_{2k-3})Z^{(q)}(\br^{*}_{2k-1}-\br^{*}_{2k-2})\notag\\
		&\quad+G(v)-G(\br^{*}_{2k-1})\Big)\dfrac{4}{\sigma^{4}}\expo^{(\Phi(q)-\xi_{1})(z-v)}<0\notag\\
		&\Longleftrightarrow\notag\\
		&F(\br^{*}_{2k-2},v;\bu^{*}_{2k-3})W^{(q)}(v-\br^{*}_{2k-2})+H(\br^{*}_{2k-2};\bu^{*}_{2k-3})Z^{(q)}(v-\br^{*}_{2k-2})\notag\\
		&\quad-\Big(F(\br^{*}_{2k-2},v;\bu^{*}_{2k-3})W^{(q)}(\br^{*}_{2k-1}-\br^{*}_{2k-2})+H(\br^{*}_{2k-2};\bu^{*}_{2k-3})Z^{(q)}(\br^{*}_{2k-1}-\br^{*}_{2k-2})+G(v)-G(\br^{*}_{2k-1})\Big)<0\notag\\
		&\Longleftrightarrow\notag\\
		&F(\br^{*}_{2k-2},v;\bu^{*}_{2k-3})[W^{(q)}(v-\br^{*}_{2k-2})-W^{(q)}(\br^{*}_{2k-1}-\br^{*}_{2k-2})]+H(\br^{*}_{2k-2};\bu^{*}_{2k-3})Z^{(q)}(v-\br^{*}_{2k-2})\label{W4.1}\\
		&\quad<H(\br^{*}_{2k-2};\bu^{*}_{2k-3})Z^{(q)}(\br^{*}_{2k-1}-\br^{*}_{2k-2})+G(v)-G(\br^{*}_{2k-1}).\notag
		\end{align}
Since both sides of \eqref{W4.1} are equal at $v=\br^{*}_{2k-1}$, \eqref{W4.1} is equivalent to prove that 
		\begin{align}
		&\dfrac{\der}{\der v}[F(\br^{*}_{2k-2},v;\bu^{*}_{2k-3})[W^{(q)}(v-\br^{*}_{2k-2})-W^{(q)}(\br^{*}_{2k-1}-\br^{*}_{2k-2})]+H(\br^{*}_{2k-2};\bu^{*}_{2k-3})Z^{(q)}(v-\br^{*}_{2k-2})]\notag\\
		&\quad<\dfrac{\der}{\der v}[H(\br^{*}_{2k-2};\bu^{*}_{2k-3})Z^{(q)}(\br^{*}_{2k-1}-\br^{*}_{2k-2})+G(v)-G(\br^{*}_{2k-1})]\notag\\
		&\Longleftrightarrow\notag\\
		&[W^{(q)}(v-\br^{*}_{2k-2})-W^{(q)}(\br^{*}_{2k-1}-\br^{*}_{2k-2})]\dfrac{\der}{\der v}F(\br^{*}_{2k-2},v;\bu^{*}_{2k-3})\notag\\
		&+F(\br^{*}_{2k-2},v;\bu^{*}_{2k-3})W^{(q)\prime}(v-\br^{*}_{2k-2})+qH(\br^{*}_{2k-2};\bu^{*}_{2k-3})W^{(q)}(v-\br^{*}_{2k-2})<g(v)\notag\\
		&\Longleftrightarrow\notag\\
		&\dfrac{[W^{(q)}(v-\br^{*}_{2k-2})-W^{(q)}(\br^{*}_{2k-1}-\br^{*}_{2k-2})]}{W^{(q)\prime}(v-\br^{*}_{2k-2})}\dfrac{\der}{\der v}F(\br^{*}_{2k-2},v;\bu^{*}_{2k-3})\notag\\
		&+F(\br^{*}_{2k-2},v;\bu^{*}_{2k-3})<\dfrac{g(v)-qH(\br^{*}_{2k-2};\bu^{*}_{2k-3})W^{(q)}(v-\br^{*}_{2k-2})}{W^{(q)\prime}(v-\br^{*}_{2k-2})}\notag\\
		&\Longleftrightarrow\notag\\
		&\dfrac{[W^{(q)}(v-\br^{*}_{2k-2})-W^{(q)}(\br^{*}_{2k-1}-\br^{*}_{2k-2})]}{W^{(q)\prime}(v-\br^{*}_{2k-2})}\dfrac{\der}{\der v}F(\br^{*}_{2k-2},v;\bu^{*}_{2k-3})<0.\notag
		\end{align}
		From here and using \eqref{W5n}, it follows that \eqref{eq21n} is true. Hence $(\br^{*}_{2k-1},\br^{(1)}_{2k-1})\subset\mathcal{D}_{2k-1}^{c}\neq\emptyset$. Letting $v\downarrow\br^{*}_{2k-1}$ in \eqref{eq21}, it can be verified easily that \eqref{eq21n} also holds for $v=\br^{*}_{2k-1}$ due to $F(\cdot,\cdot;\br^{*}_{2k-3})\in C(\bar{\mathcal{A}}_{2k-3})$.
\end{proof}
\begin{proof}[Proof of Proposition \ref{decFF}]
Given $x>\br^*_{2k-1}$, let $\bu^x=\{\bu^*_{2k-3}\}\cup\{\br^*_{2k-2},x\}$. Then, using the definition of $\br^*_{2k-1}$ and the fact that $F(\br^*_{2k-2},z;\bu^*_{2k-3})$ is non-decreasing in $z$, we can show that $V_{\bu^x}(x)=V_{\bu^*_{2k-1}}(x)$, $V'_{\bu^x}(x)=V'_{\bu^*_{2k-1}}(x)$, and $V''_{\bu^x}(x)=V''_{\bu^*_{2k-1}}(x)$. Therefore, we obtain that
$$(\mathcal{L}-q)(V_{\bu^*_{2k-1}}-V_{\bu^x})(x)\leq0.$$
Hence, as in Theorem \ref{Ta1} we get the result.
\end{proof}
\end{document}